\documentclass[10pt]{amsart}
\usepackage{amsmath,amsfonts, mathtools,amssymb,latexsym,amssymb,amsthm,adjustbox,mathrsfs,amsbsy, bm,tikz-cd,indentfirst, makecell,graphicx, verbatim, hyperref, enumerate,xy}

\usepackage[top=1in,bottom=1.3in,left=1.1in,right=1.1in]{geometry}

\theoremstyle{plain}
\newtheorem{theorem}{Theorem}[section]
\newtheorem{lemma}[theorem]{Lemma}

\newtheorem{corollary}[theorem]{Corollary}
\newtheorem{proposition}[theorem]{Proposition}
\newtheorem{remark}[theorem]{Remark}
\newtheorem{definition}[theorem]{Definition}

\newtheorem{example}[theorem]{Example}
\newtheorem{setup}[theorem]{Setup}
\newtheorem{point}[theorem]{}

\newcommand{\m}{\mathfrak{m}}

\newcommand{\Z}{\mathbb{Z}}
\newcommand{\N}{\mathbb{N}}

\newcommand{\n}{\mathfrak{n}}

\newcommand{\codim}{\operatorname{codim}}
\newcommand{\grade}{\operatorname{grade}}
\newcommand{\depth}{\operatorname{depth}}

\newcommand{\ann}{\operatorname{ann}}
\newcommand{\cx}{\operatorname{cx}}

\newcommand{\CMS}{\operatorname{\underline{CM}}}

\newcommand{\pdim}{\operatorname{pdim}}

\newcommand{\Hom}{\operatorname{Hom}}

\newcommand{\Ext}{\operatorname{Ext}}
\newcommand{\Tor}{\operatorname{Tor}}

\usepackage{graphicx}
\usepackage{subfigure}
\usepackage{epsfig}
\usepackage{epstopdf}
\usepackage{float}
\theoremstyle{plain}

\usepackage{xcolor}
\usepackage{titlesec}

\titleformat{name=\section}{}{\thetitle.}{0.5em}{\centering\scshape\large}

\title[DERIVED FUNCTORS AND HILBERT POLYYNOMIALS OVER GORENSTEIN RINGS ]{\small DERIVED FUNCTORS AND HILBERT POLYYNOMIALS OVER GORENSTEIN RINGS}

\author{\textsc{Satyabrata Paul, Tony J. Puthenpurakl}}
\address{Department of Mathematics, IIT Bombay, Powai, Mumbai 400076, India}
\email{satyabratapaul2357@gmail.com, tputhen@gmail.com}

\subjclass[2020]{Primary 13D45; Secondary 13C11}
\date{\today}
\keywords{Derived functors, Hilbert polynomials, Gorenstein rings}

\begin{document}

\begin{abstract}
    Let $(A,\m,k)$ be a Gorenstein ring of dimension $d\ge 1$, $N$ a perfect module of dimension $t\ge 1$ and $I$ an ideal of definition of $N$. For a non-free  maximal Cohen-Macaulay (=MCM) $A$-module $M$ and an integer $i\ge 1$, it is well known that the functions $n \mapsto \lambda(\Tor_i^A(M,N/I^{n+1}N))$ and $n \mapsto \lambda(\Ext^i_A(M,N/I^{n+1}N))$ are of polynomial types of degrees $r_i^{I,N}(M)$ and $s_{I,N}^i(M)$, respectively. We prove that $r_i^{I,N}(M)\le t-1$ and $s^i_{I,N}(M)\le t-1$ and when $I$ is the maximal ideal $\m$, both the inequalities become equalities. We also show that $r_i^{I,N}(M)\le r_1^{I,N}(\Omega^dk)$, $s^i_{I,N}(M)\le s^1_{I,N}(\Omega^dk)$ and $r_i^{I,N}(\Omega^dk)=r_1^{I,N}(\Omega^dk)=s^1_{I,N}(\Omega^dk)=s^i_{I,N}(\Omega^dk)$.
\end{abstract}
\maketitle
\section{Introduction} In this paper, all rings considered are commutative, Noetherian, local with unity and all modules considered are finitely generated, unless staed otherwise. We use terminology from \cite{Bruns and Herzog}. Unless otherwise specified, $A$ denotes a local ring with unique maximal ideal $\m$ and residue field $k$. The aim of this paper is to study the degree of Hilbert polynomials of torsion and extension functors. If $X$ is an $A$-module of finite length, we denote its length by $\lambda(X)$. Let $M,N$ be finitely generated $A$-modules $I$ an ideal of $A$. It is well known that, if the lengths $\lambda(M/I^nM)$ are finite for all sufficiently large $n$, then they are given by a polynomial function in $n$, called the \textit{Hilbert–Samuel polynomial}, whose degree equals $\dim M$. Let $i\ge 1$ be an integer. In his paper, Kodiylam proved a generalized version of this result, by showing that $\lambda(\Tor_i^A(M,N/I^nN))$ is eventually polynomial, provided $\lambda(M\otimes N)$ is finite; see \cite[Theorem 2]{Kodi}. Later, Theodorescu proved that, if the lengths  $\lambda(\Tor_i^A(M, N/I^{n+1}N))$ and $\lambda(\Ext^i_A(M, N/I^{n+1}N))$ are finite for all sufficiently large $n$, then these lengths are eventually given by polynomial functions in $n$. Moreover, he derived bounds for the degrees of the corresponding Hilbert–Samuel polynomials and provided a condition under which these bounds are attained; see \cite[Corollary 4]{E.Theodorescu}. However, verifying whether this condition holds is often a difficult task. Also the degrees of the corresponding Hilbert-Samuel polynomials are not as easy to determine; see \cite{IP}, \cite{KT}. There are some results which show under certain contains the maximal degree is attained; see \cite{CKST}, \cite{IP}, \cite{KP}, \cite{KT}, \cite{Hilbertcoefficient} and \cite{PZ}. On the other hand, this function can also be identically zero; see \cite[Remark 20] {Hilbertcoefficient}, \cite[Example 2.6]{PZ}. In this paper, we improve the existing bounds, characterize the cases in which they are achieved, and analyze the situations where they fail to be attained.

\medskip
Recall, a non-zero $A$-module $N$ is said to be \textit{perfect} if $\pdim_AN=\min\{j|\Ext^j_A(M,A)\ne 0\}$. Note that, if the ring $A$ is Cohen-Macaulay, then $N$ is perfect if and only if $N$ is a Cohen-Macaulay  module and $\pdim_AN<\infty$ (see \cite[Theorem 2.1.5]{Bruns and Herzog}).

\medskip

Throughout this paper we work with the following setup, unless stated otherwise.
\begin{setup}   \normalfont \label{setup0} Let $(A,\m)$ be a non-regular Gorenstein local of dimension $d\ge 1$, with the residue field $A/\m =k$.  Let $N$ be a perfect $A$-module of dimension $t\ge 1$ and $I$ an ideal of definition of $N$. For a maximal Cohen-Macaulay (=MCM) $A$-module $M$ and an integer $i\ge 1$, we set \begin{align*}
   r_{i}^{I,N}(M)=\deg (n\mapsto \lambda(\Tor_{i}^{A}(M,N/I^nN))) ,\\
   s^{i}_{I,N}(M)=\deg (n\mapsto \lambda(\Ext^{i}_{A}(M,N/I^nN))).
\end{align*}
\end{setup}
\medskip
Recall $A$ is said to be a \textit{hypersurface} ring if its completion $\widehat{A}=Q/(f)$ where $(Q,\n)$ be a regular local ring and $f\in \n^2$ is non-zero. Motivated by the above discussion, the second author investigated the degrees of the corresponding Hilbert–Samuel polynomials over hypersurface rings. More precisely, he proved the following theorem.
\noindent
\begin{theorem} \cite[Theorem 9.12]{HypersurfaceII} Let $(A,\m)$ be a hypersurface local ring, $N$ a perfect $A$-module and $I$ an ideal of definition $N$. Then  there exist integers $r,s$ depending only on $I$ and $N$ such that for any non-free MCM $A$-module $M$ and integer $i\ge 1$,  one has
\begin{enumerate} [\rm (i)]
    \item $r=r_i^{I,N}(M)$,
    \item $s=s_{I,N}^i(M)$,
    \item $r=s$.
\end{enumerate}

\end{theorem}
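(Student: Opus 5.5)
This is a cited result (\cite[Theorem 9.12]{HypersurfaceII}), so the plan is to reconstruct the argument through the Eisenbud operator and the periodicity of MCM modules over a hypersurface. We may pass to the completion, since lengths, MCM-ness, perfectness and ideals of definition are all preserved under $\widehat{A}$. So assume $A=Q/(f)$. For a non-free MCM module $M$, the minimal resolution is eventually $2$-periodic, given by a matrix factorization, and in fact $\Omega^2 M\cong M$ when $M$ has no free summand. Free summands contribute nothing to $\Tor_i$ or $\Ext^i$ for $i\ge1$, so we may discard them.

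\textbf{Reduction to two indices.} Dimension shifting along the periodic resolution gives, for $i\ge1$,
\[
\Tor_{i+2}^A(M,-)\cong\Tor_i^A(M,-), \qquad \Tor_{i+1}^A(M,-)\cong\Tor_i^A(\Omega M,-),
\]
and likewise for $\Ext$. Hence $r_i^{I,N}(M)$ depends only on the parity of $i$, and $r_2(M)=r_1(\Omega M)$.

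\textbf{Key step: graded modules over the Rees algebra.} Let $\mathcal R=A[It]$ and consider $L=\bigoplus_n N/I^{n+1}N$, a module over $\mathcal R$ (up to shift). Then $\bigoplus_n\Tor_i^A(M,N/I^{n+1}N)$ is a finitely generated graded module over $\mathcal R[\chi]$, where $\chi$ is the Eisenbud operator of degree $-2$ in homological degree. Since $\pdim_A N<\infty$, $\Tor_i^A(M,N/I^{n+1}N)$ is controlled by $\Tor_i^A(M,I^{n}N/I^{n+1}N)$ through the exact sequences $0\to I^nN/I^{n+1}N\to N/I^{n+1}N\to N/I^nN\to0$ and $\Tor_{>\pdim N}(M,N)=0$. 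This suggests working with the associated graded module $G_I(N)$. The goal is to show that the degree of the Hilbert polynomial of $n\mapsto\lambda(\Tor_i(M,N/I^{n+1}N))$ equals a fixed quantity $r$ determined by $G_I(N)$ and $A$ alone: namely $\dim$ of the support of $G_I(N)$ inside the "singular" locus cut out by $f$, minus one.

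The main obstacle is showing independence from $M$. I would try to show that for all non-free MCM $M$ the support variety computed from $\mathcal R[\chi]$ is the same. Over a hypersurface, the support variety of a non-free MCM module is the whole (one-dimensional) cohomological variety, so every non-free $M$ has "full" support. To make this concrete, I would compare $M$ with $\Omega^d k$. Every MCM $M$ is built from $k$ by syzygies and extensions. For the reverse comparison, use the fact that over a hypersurface $\Tor_{\gg0}(M,X)=0$ forces $\pdim X<\infty$ or $M$ free (the Huneke--Wiegand rigidity, i.e.\ the dependency formula), so the vanishing locus in $\operatorname{Proj}$ of the relevant Rees-like algebra is the same for all non-free $M$. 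That yields $r_i(M)=r_1(\Omega^dk)$ for all $i\ge1$ and non-free $M$, giving (i).

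\textbf{Ext and the equality $r=s$.} For (ii), use Gorenstein duality: for MCM $M$, $\Ext^i_A(M,X)\cong\Tor_{j}^A(M^*,X)$ up to the standard shift through a complete resolution, since over a hypersurface the complete resolution is periodic and self-dual. Because $M^*$ is again non-free MCM, part (i) applied to $M^*$ gives $s=r$, which also establishes (iii).
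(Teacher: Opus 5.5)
The paper quotes this theorem without proof, so I compare your plan with the route its own Section 3 machinery gives. Several of your ingredients are sound: passing to the completion, discarding free summands, $2$-periodicity, and the $\Ext$ step. For MCM $M$ over a hypersurface, a complete resolution gives $\Ext^i_A(M,X)\cong\Tor^A_{i+1}(M^*,X)$ for $i\ge 1$, so (ii) and (iii) do follow from (i) applied to $M^*$. That is a clean route to $r=s$, though it needs periodicity, whereas Theorem \ref{abound}(iii) gets $r_1^{I,N}(\Omega^dk)=s^1_{I,N}(\Omega^dk)$ over any Gorenstein ring.

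\textbf{The gap.} The genuine gap is the key step: independence of $r_1^{I,N}(M)$ from $M$. None of the three devices you invoke proves it.
\begin{enumerate}[\rm (1)]
\item The support variety over $k[\chi]$ is irrelevant here. For periodic $M$ the operator $\chi$ acts as the periodicity isomorphism on $\Tor_{\ge 1}^A(M,-)$. By contrast, $r_i^{I,N}(M)+1$ is the Krull dimension of $L_i^{I,N}(M)$ as an $\mathcal R(I)$-module, which measures growth in $n$ only. Your formula via $G_I(N)$ and ``the locus cut out by $f$'' is likewise unsupported.
\item The claim that every MCM module is built from $k$ by syzygies and extensions is false unless $A$ has an isolated singularity. $\operatorname{thick}(\Omega^dk)$ contains only MCM modules that are free on the punctured spectrum; for example $A/(x)$ over $k[[x,y,z]]/(xy)$ is not in it. The inequality $r_i^{I,N}(M)\le r_1^{I,N}(\Omega^dk)$ is true, but it comes from Theorem \ref{abound}(iv), not from generation.
\item The Huneke--Wiegand criterion only detects whether $L_i^{I,N}(M)$ vanishes in large degrees, i.e.\ whether the degree is $-1$. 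It cannot be localized at primes of $\mathcal R(I)$, since the Tor's are taken over $A$. Moreover, periodicity only turns $\Tor_1^A(M,X)=0$ into vanishing in odd degrees, so even reaching $\Tor^A_{\gg 0}(M,X)=0$ needs an extra argument.
\end{enumerate}

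\textbf{What is missing.} You need a mechanism that converts ``degree'' into ``eventual vanishing''; in the paper this is the reduction of \ref{-1argu}. Choose $x\in I$ superficial for $N\oplus((M\oplus\Omega^dk)\otimes N)$ with $xt$ filter-regular on the relevant $L_p^{I,N}$. By \ref{F1tor}, every nonnegative degree then drops by exactly one on passing to the perfect module $N/xN$ of dimension $t-1$. So, by induction on $t$ exactly as in the proof of Theorem \ref{abound}, it suffices to prove, for \emph{every} perfect $N$, the dichotomy $r_1^{I,N}(M)=-1 \iff r_1^{I,N}(\Omega^dk)=-1$.
\begin{enumerate}[\rm (1)]
\item The implication $\Leftarrow$ is Lemma \ref{-1case}.
\item For $\Rightarrow$, put $X=N/I^nN$ with $n\gg 0$. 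Since $X$ has finite length and the Betti numbers of $M$ are constant, Hochster's $\theta(M,X)$ vanishes (use additivity along a composition series of $X$). Hence $\lambda(\Tor_1^A(M,X))=\lambda(\Tor_2^A(M,X))$.
\item So $\Tor_1^A(M,X)=0$ forces $\Tor^A_{\ge 1}(M,X)=0$ by periodicity. Only at this point does the hypersurface Tor-vanishing theorem apply: it gives $\pdim_A X<\infty$, and then $r_1^{I,N}(\Omega^dk)=-1$ by \ref{torext0}.
\end{enumerate}
The same $\theta$-identity also gives $r_1^{I,N}(M)=r_2^{I,N}(M)$ directly. With the dichotomy and the induction in place, your parity reduction and duality step complete the proof.
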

\noindent
He has also explored the case when $A$ is complete intersection (see \cite[Theorem 1.1]{CI}).

\medskip

Our motivation is to investigate these results over Gorenstein rings. Our first objective is to improve the known estimates for the numerical invariants $r_i^{I,N}(M)$ and $s_{I,N}^i(M)$. To this end, we show

\medskip

\noindent \textbf{Theorem I.} (Theorem \ref{abound}) [With hypothesis as in \ref{setup0}]\phantomsection\label{Theorem I} \textit{ Let $M$ be a  MCM $A$-module $M$. Then for an integer  $i\ge 1$, we have
\begin{enumerate} [\rm (i)]
    \item $  r_i^{I,N}(\Omega^dk)=r_1^{I,N}(\Omega^dk)$,
    \item $ s^i_{I,N}(\Omega^dk)=s^1_{I,N}(\Omega^dk) $,
    \item $r_1^{I,N}(\Omega^dk)=s^1_{I,N}(\Omega^dk)$,
    \item $r_i^{I,N}(M)\le r_1^{I,N}(\Omega^dk)\le t-1$,
    \item $s^i_{I,N}(M)\le s^1_{I,N}(\Omega^dk)\le t-1 $.
\end{enumerate}
}
\noindent

\medskip
As a consequence of the preceding theorem, the MCM $A$-module $\Omega^dk$ dominates all other MCM $A$-modules in terms of the degrees of the associated Hilbert-Samuel polynomials. It is therefore natural to ask what happens when one considers the higher syzygies and cosyzygies of
$\Omega^dk$. We use  spectral sequences and techniques from MCM approximation (see \cite[Theorem A]{Auslander Buchweitz}) to prove the following theorem.

\medskip

\noindent \textbf{Theorem II.} (Theorem \ref{Omegadk=Omegan+dk}) [With hypothesis as in \ref{setup0}] \phantomsection\label{Theorem II} \textit{Let $n$ be an integer, then we have
\begin{enumerate} [\rm (i)]
\item $r_1^{I,N}(\Omega^dk)=r_1^{I,N}(\Omega^n\Omega^dk)$,
\item $s^1_{I,N}(\Omega^n\Omega^dk)=s^1_{I,N}(\Omega^dk).$
\end{enumerate}
}
\medskip

\noindent We next show that, in the case $I=\m$, the inequalities of \hyperref[Theorem I]{Theorem I} become equalities.
\medskip

\noindent \textbf{Theorem III.} (Theorem \ref{tor,I=m}, \ref{ext,I=m}) [With hypothesis as in \ref{setup0}] \phantomsection\label{Theorem III} \textit{Let $M$ be a non-free MCM $A$-module. Then for any integer $i\ge 1$, we have $ r_{i}^{\m,N}(M)=t-1=s^{i}_{\m,N}(M)$.}

\medskip

\noindent We exhibit an example (see Remark \ref{counterexample}) showing that the conclusion of above theorem fails for arbitrary $\m$-primary ideals.
\medskip

\medskip

Next, we study the stable category MCM $A$-modules $\CMS(A)$ and various thick subcategories of $\CMS(A)$.  We use \cite{Neeman} for notation on triangulated categories. For a MCM $A$-module $M$, we set $r_\infty^{I,N}(M)=\sup \{r_1^{I,N}(\Omega^sM)|s\in \Z\}$. We show that, when $A$ is a complete intersection, the infimum of these values taken over all non-free MCM $A$-modules is attained by a 2-periodic MCM $A$-module. More precisely, we prove

\medskip
\noindent
\textbf{Theorem IV.} (Theorem \ref{cx1CI}) \phantomsection\label{Theorem IV} \textit{Let $(A,\m)$ be a complete intersection local ring of dimension $\ge1$ and $N$ a perfect $A$-module of dimension $\ge1$ and $I$ an ideal of definition of $N$. Then, with the above notation, there exists a 2-periodic MCM $A$-module $E$(that is, $\Omega^2E\cong E$), such that $$r_1^{I,N}(E)=\inf \{r_\infty^{I,N}(M)|0\ne M \in \CMS(A)\}.$$}

\medskip
We continue our discussion by considering the case where the bound in \hyperref[Theorem III]{Theorem III} is not attained. To this end, we take the perfect module $N$ to be $A$ itself. Let $G_I(A)=\bigoplus _{n\ge 0}I^n/I^{n+1}$ be the \textit{associated graded ring} of $A$ with respect to $I   $ and $G_I(M)=\bigoplus _{n\ge 0}I^nM/I^{n+1}M$ be the \textit{associated graded module} of $M$ with respect to $I$, considered as a graded $G_I(A)$-module. Then for each integer $i\ge 0$, we have $H^i_{G_I(A)_+}(G_I(M))_n=0$ for $n\gg 0$ (see\cite[Theorem 16.1.5]{Brodman-Sharp}), where $G_I(A)_+=\bigoplus_{n\ge1}I^n/I^{n+1}$. Set $a_i(G_I(M))=\max \{n|H^i_{G_I(A)_+}(G_I(M))_n\ne0\}$ for an integer $i\ge 0$. For any arbitrary Noetherian local ring $A$ of dimension $d$, Trung proved that $a_d(G_I(A))+d\le \operatorname{red}(I)$ (see \cite[Proposition 3.2]{Trung} and \cite[Theorem 18.3.12]{Brodman-Sharp}) and it can be easily generalized to arbitrary modules of dimension $d$; that is, for any finitely generated module $M$ of dimension $d$, we have
$$a_d(G_I(M))+d\le \operatorname{red}(I).$$
Recall that for a maximal Cohen–Macaulay $A$-module $E \in \CMS(A)$, the thick subcategory $\operatorname{thick}(E)$ is defined as the intersection of all thick subcategories of $\CMS(A)$ that contain $E$. We now present a generalization of Trung’s result in our setting.
\medskip

\noindent
\textbf{Theorem V.} (Theorem \ref{Trunggen}) \phantomsection\label{Theorem V} \textit{Let $(A,\m,k)$ be a non-regular Gorenstein local ring of dimension $d\ge 1
$ and $I$ an $\m$-primary ideal. Let $E$ be a MCM $A$-module such that $r_\infty^{I,A}(E)\le d-q$ for some integer $q\ge 1$. Then there exists an integer $\eta$ depending only on $I$ and $A$, such that for any MCM $A$-module $M\in \operatorname{thick}(E)$ we have  $$\operatorname{max}\{a_d(G_I(M)),\ldots, a_{d-q+1}(G_I(M))\}\le \eta.$$
}

We also examine the case when $r_1^{I,N}(\Omega^dk)=-1$, that is, the function $n\mapsto \Tor_1^A(\Omega^dk,N/I^{n+1}N) $ is given by the zero polynomial for $n\gg 0$. Before we state an application we need the following notation. The graded ring $G_I(A)$ has a unique graded maximal ideal $\mathfrak{M}_G=\m /I \oplus_{n\ge 1}I^n/I^{n+1}$. For an $A$-module $X$, set $\depth G_I(X)=\grade(\mathfrak{M}_G,G_I(X))$. We conclue this paper with the following theorem.
\medskip

\noindent
\textbf{Theorem VI.} (Theorem \ref{depthdueto-1}) [With hypothesis as in \ref{setup0}] \phantomsection\label{Theorem VI} \textit{If $\pdim_A(N/I^{n+1}N)<\infty$ for all $n\ge 0$, then for any MCM $A$-module $M$, we have $\depth G_I(M\otimes N)\ge \depth G_I(N)$.
}
\medskip

Finally, we explore additional cases in which the inequalities in \hyperref[Theorem I]{Theorem I} become equalities. The failure of the conclusion of \hyperref[Theorem III]{Theorem III} for arbitrary $\m$-primary ideals suggests that further assumptions are required to ensure the desired equalities.

 \noindent
 \begin{definition} \normalfont We now recall two definitions.
   \begin{enumerate} [\rm (i)]
       \item An $A$-module $U$ is said to be a \textit{Test module}, if for all $A$-modules $V$ with $\Tor^A_{i\gg 0}(U,V)=0$ have finite projective dimension.
       \item A Noetherian ring $R$ is said to satisfy the \textit{Tor-vanishing property} if for finite $R$-modules $X,Y$ with $\Tor^R_{i\gg 0}(X,Y)=0$ implies that either $X$ or $Y$ has finite projective dimension.
   \end{enumerate}
    We note that test modules are abundant; see \ref{egtestmodule}. Also, there exist large classes of rings that satisfy the Tor-vanishing property; see \ref{egtvp}.
 \end{definition}

 \noindent
\textbf{Theorem VII.} (Theorem \ref{infinitei}) [With hypothesis be as in \ref{setup0}] \phantomsection\label{Theorem VII} \textit{Assume further that the residue field $k$ is uncountable. Let $M$ be a non-free MCM $A$-module, which is also a test module. Then there exists infinitely many integers $i$ such that $r_i^{I,N}(M)=r_1^{I,N}(\Omega^dk)$.}

\medskip

\noindent More generally, if $A$ satisfies Tor-vanishing property, then every non-free MCM $A$-module  is a test module, therefore every non-free MCM $A$-module fulfills the conclusion of the preceding theorem.
\medskip

\textit{Techniques used to prove the results:} Most of the results in this paper are established by induction on $t=\dim N$. The case $t=1$ serves as the foundation of our approach, while the higher-dimensional cases are handled using the well-known technique of common superficial and filter-regular elements. For the base case, we make use the notion of the cosyzygy of a MCM module over a Gorenstein rings. We study the associated graded module $G_I(M)$ via the $\mathcal{R}(I)$-module $L^I(M)=\bigoplus _{n\ge 0}M/I^{n+1}M$. Recall a graded module $T$ over a graded ring $R$ is said to be \textit{*-Artinian} if every descending chain of graded submodules of $T$ terminates. The second author in his paper \cite{Part1} proved that $L^I(M)$ is a module over the Rees algebra $\mathcal{R}(I)$ (not finitely generated)
 and the local cohomology modules $H^i_{\mathfrak{M}}(L^I(M))$ is *-Artinian for $0\le j \le \depth  M-1$, where $\mathfrak{M}=\m \oplus \mathcal{R}(I)_+$, the *maximal ideal of $\mathcal{R}(I)$ (see \cite[Proposition 4.4]{Part1}). For some properties of *-Aritinian modules we refer to \cite[1.10]{Part1}. We use the short exact sequence of $\mathcal{R}(I)$-modules
$$ 0\longrightarrow G_I(M)\longrightarrow L^I(M)\longrightarrow L^I(M)(-1)\longrightarrow 0$$
to have control over the numbers  $a_j(G_I(M))$ for $j=0,\ldots,\depth M-1$.
\medskip

Here is an overview of the contents of the paper. In \hyperref[section 2]{section 2} we discuss a few preliminaries. In \hyperref[section 3]{section 3} and \hyperref[section 4]{section 4}, we prove the \hyperref[Theorem I]{Theorem I} and \hyperref[Theorem II]{Theorem II} respectively. In \hyperref[section 5]{section 5}, we consider the case when $I=\m$ and we prove \hyperref[Theorem III]{Theorem III}. We also provide an example showing that the conclusions of \hyperref[Theorem III]{Theorem III} need not hold for arbitrary $\m$-primary ideals.  In \hyperref[section 6]{section 6}, we investigate various thick subcategories of the stable category $\CMS(A)$, examine their properties, and present a proof of \hyperref[Theorem IV]{Theorem IV}. \hyperref[section 7]{Section 7} begins with a discussion of the $\mathcal{R}(I)$-modules $L^I(M\otimes N)$ and $G_I(M\otimes N)$ and the relation between the graded local cohomology modules $H^{t-1}_{\mathfrak{M}}(L^I(M\otimes N)),H^{t}_{\mathfrak{M}}(G_I(M\otimes N))$ and $H^{t}_{\mathfrak{M}}(G_I(N))$. We also prove the \hyperref[Theorem V]{Theorem V} and \hyperref[Theorem VI]{Theorem VI}. The \hyperref[final section]{final section} contains the definition of a test module and the Tor-vanishing property, followed by several examples and the proof of \hyperref[Theorem VII]{Theorem VII}.

\section{Notation and Preliminaries} \phantomsection \label{section 2}
In this section we introduce some notation and discuss a few preliminaries which will be used in this paper.

\begin{setup} \normalfont \label{notation}
 Let $(A,\m,k)$  be Gorenstein local ring, which is not regular. Assume $N$ is a perfect  module of dimension $t\geq 1$ and $I$ is an ideal of definition of $N$. For a non-free MCM $A$-module $M$ and a positive integer $i$, we set \begin{align*}
   r_{i}^{I,N}(M)=\deg (n\mapsto \lambda(\Tor_{i}^{A}(M,N/I^nN))) ,\\
   s^{i}_{I,N}(M)=\deg (n\mapsto \lambda(\Ext^{i}_{A}(M,N/I^nN))).
\end{align*}
When $I=\m$ we write $r_{i}^{N}(M)$ and $s^{i}_{N}(M)$ instead of $r_{i}^{\m,N}(M)$ and $s^{i}_{\m,N}(M)$, respectively. For $i\ge0$, let $\Omega^iM$ denote the ${\operatorname{i^th}}$-syzygy of $M$. For a finitely generated $A$-module $X$, we set $\mu(X)=\dim_k(X/\m X)$.
\end{setup}

\begin{remark} \normalfont \label{etpi=1}
     Let $i\geq 2$ be an integer. Then we have
\begin{align*}
    r_{i}^{I,N}(M)=r_{i-1}^{I,N}(\Omega M), \\
    s^{i}_{I,N}(M)=s^{i-1}_{I,N}(\Omega M).
\end{align*}
Therefore, often we can assume $i=1$.
 \end{remark}

\begin{point}\normalfont \label{basechange} If the residue field $k$ of $A$ is finite,
we make use of the following faithfully  flat extension $$  (A,\m,k) \longrightarrow (A[X]_{\m A[x]}, \m A[X]_{\m A[x]},k(X))$$ to assume the residue field of $A$ is infinite. We also consider the extension $A \longrightarrow A[[X]]_{\m A[[X]]}$ in order to obtain an uncountable residue field.
\end{point}

\begin{point}\normalfont\normalfont \textbf{Cosyzygy:} \label{cosyzygy}
  Let $M$ be a non-free MCM over a Gorenstein local ring $(A,\m)$. We will define the \textit{cosyzygy} of $M$. Set $(-)^{*}=\Hom_{A}(-,A)$. Consider a minimal presentation
$
    0\longrightarrow \Omega (M^{*}) \longrightarrow A^{\tau}\longrightarrow M^* \longrightarrow 0
$ of $M^{*}$, where $\tau =\operatorname{type} (M)$. Since $M$ is MCM and $A$ is Gorenstein, therefore we have  $M^{**} \cong M$ and $\Ext_{A}^{i}(M^*,A)=0$ for $ i\geq 1$ (see \cite[Theorem 3.3.10]{Bruns and Herzog}). Hence applying the contra-variant functor $(-)^*$, we obtain the following exact sequence $ 0\longrightarrow M\longrightarrow A^{\tau} \longrightarrow (\Omega (M^{*}))^{*}\longrightarrow 0$. We set $\Omega^{-1}M:=(\Omega (M^{*}))^{*}$, it is called the $1^{\operatorname{st}}$-\textit{cosyzygy} of $M$. Recursively, we can define the ${\operatorname{i^th}}$-\textit{cosyzygy} of $M$ as $\Omega^{-i}(M):=\Omega^{-1}(\Omega^{-(i-1)}(M))$.
 Note that $\Omega^{i}M$ is non-free MCM for any integer $i$ and $\Omega \Omega^{-1}M\cong M$.
\end{point}

Next, we establish a few results that will be used in the subsequent discussion.

 \begin{lemma} \label{torext0}
   Let $(A,\m)$ be a $d$-dimensional Gorenstein local ring and $M$  a MCM $A$-module and $N$ a finite $A$-module with finite projective dimension (equivalently, finite injective dimension), then for any positive integer $i$, we have $   \Tor_{i}^{A}(M,N)=0=\Ext_{A}^{i}(M,N).$
\end{lemma}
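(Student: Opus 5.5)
We prove both vanishings by induction on $p=\pdim_A N$, which is finite and at most $d$ by Auslander--Buchsbaum. Over a Gorenstein ring, finite projective dimension and finite injective dimension coincide for finitely generated modules, so either hypothesis may be used.

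\emph{Base case.} If $p=0$, then $N$ is free. Hence $\Tor_i^A(M,N)=0$ for $i\ge 1$, and $\Ext^i_A(M,N)$ is a finite direct sum of copies of $\Ext^i_A(M,A)$. The latter vanishes for $i\ge 1$ because $M$ is MCM and $A$ is Gorenstein; this is the fact already quoted in \ref{cosyzygy} from \cite[Theorem 3.3.10]{Bruns and Herzog}, and it also follows from local duality.

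\emph{Inductive step.} If $p\ge 1$, take a short exact sequence $0\to \Omega N\to F\to N\to 0$ with $F$ free, so that $\pdim_A \Omega N=p-1$.
\begin{itemize}
\item For Tor, the long exact sequence gives $\Tor_i^A(M,N)\cong \Tor_{i-1}^A(M,\Omega N)$ for $i\ge 2$, and this vanishes by induction. For $i=1$ we get $\Tor_1^A(M,N)=\ker(M\otimes \Omega N\to M\otimes F)$, which is not covered by the induction hypothesis.
\item For Ext, the long exact sequence gives $\Ext^i_A(M,N)\cong \Ext^{i+1}_A(M,\Omega N)$ for $i\ge 1$, since $\Ext^i_A(M,F)=0=\Ext^{i+1}_A(M,F)$ by the base case. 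The right-hand side vanishes by induction. So the Ext part is straightforward.
\end{itemize}

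\emph{The remaining point: $\Tor_1$.} The obstacle is exactly $\Tor_1^A(M,N)$. To handle it, dimension-shift on $M$ instead of $N$. Since $A$ is Gorenstein and $M$ is MCM, \ref{cosyzygy} gives an exact sequence $0\to M\to A^{\tau}\to \Omega^{-1}M\to 0$ with $\Omega^{-1}M$ again MCM. Iterating, $M\cong \Omega^{j}(\Omega^{-j}M)$ for every $j\ge 1$, up to free summands. Then
$$\Tor_i^A(M,N)\cong \Tor_{i+j}^A(\Omega^{-j}M,N) \quad \text{for } i\ge 1.$$
Choosing $j$ with $i+j>p$, this vanishes because $N$ has a free resolution of length $p$. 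This argument proves the Tor statement directly, for every $i\ge 1$, with no induction needed.

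The analogous direct argument also works for Ext: write $\Ext^i_A(M,N)\cong \Ext^{i+j}_A(\Omega^{-j}M,N)$ and use $\operatorname{injdim} N=d$ to conclude it vanishes once $i+j>d$.
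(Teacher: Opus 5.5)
Your proposal is correct. Its decisive step is shifting $M$ through cosyzygies to get $\Tor_i^A(M,N)\cong\Tor_{i+j}^A(\Omega^{-j}M,N)$ and $\Ext^i_A(M,N)\cong\Ext^{i+j}_A(\Omega^{-j}M,N)$, then using $\pdim_A N\le d$ and $\operatorname{injdim}_A N=d$; this is exactly the paper's proof with $j=d$, and it makes your preliminary induction on $\pdim_A N$ (valid for Ext) unnecessary.
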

\begin{proof} For $i\ge 1$, we have $  \Tor_{i}^{A}(M,N)=\Tor_{i+d}^{A}(\Omega^{-d}M,N)=0$. As $A$ is Gorenstein and $\operatorname{pdim}_{A}N <\infty$, we get $ \operatorname{idim}_{A}N < \infty$ (see \cite[Exercise 3.1.25] {Bruns and Herzog}). So for $i\ge 1$, we have $ \Ext_{A}^{i}(M,N) =\Ext_{A}^{i+d}(\Omega^{-d}M,N)=0.
 $
\end{proof}

\begin{proposition} \label{homtensorCM}
   Let $(A,\m)$ be Gorenstein local ring and $M$ a MCM $A$-module and $N$ a perfect module of dimension $t$. Then $M \otimes_{A} N$ and $\Hom_{A}(M,N)$ are Cohen-Macaulay $A$-module of dimension $t$.
\end{proposition}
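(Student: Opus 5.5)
Both claims come down to Lemma \ref{torext0}, an induction along a minimal free resolution of $N$, and the depth lemma. Since $N$ is perfect over the Cohen-Macaulay ring $A$, the Auslander--Buchsbaum formula gives $p:=\pdim_A N=\depth A-\depth N=d-t$. Take a minimal free resolution
$$0\to F_p\to \cdots\to F_1\to F_0\to N\to 0$$
and let $K_j$ be the $j$-th syzygy of $N$, so $K_0=N$ and $K_p=F_p$. Then $\pdim K_j=p-j<\infty$ and $\depth K_j=t+j$ for $0\le j\le p$; for $j\ge 1$ this can also be read off inductively from the short exact sequences $0\to K_{j+1}\to F_j\to K_j\to 0$.

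\textbf{Tensor product.} Tensoring each sequence with $M$ keeps it exact, because $\Tor_1^A(M,K_j)=0$ by Lemma \ref{torext0}. This gives exact sequences
$$0\to M\otimes K_{j+1}\to M\otimes F_j\to M\otimes K_j\to 0.$$
I argue by descending induction on $j$ that $\depth(M\otimes K_j)\ge t+j$. The base case is $M\otimes K_p\cong M^{\operatorname{rank}F_p}$, which has depth $d=t+p$. For the inductive step, $M\otimes F_j$ is a direct sum of copies of $M$ and so has depth $d$. The depth lemma then gives
$$\depth(M\otimes K_j)\ge \min\{d,\ \depth(M\otimes K_{j+1})-1\}\ge t+j.$$
At $j=0$ this yields $\depth(M\otimes N)\ge t$. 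On the other hand $\dim(M\otimes N)\le \dim N=t$. Hence $M\otimes N$ is Cohen-Macaulay of dimension $t$, provided it is nonzero. If $M\ne 0$ then $M\otimes N\ne 0$ by Nakayama, since $\operatorname{Supp}(M\otimes N)=\operatorname{Supp}M\cap\operatorname{Supp}N\ni \m$. If $M=0$ the statement is vacuous, so I implicitly take $M\ne 0$.

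\textbf{Hom.} The argument is dual. Applying $\Hom_A(M,-)$ to the same sequences and using $\Ext^1_A(M,K_{j})=0$ from Lemma \ref{torext0} keeps them exact:
$$0\to \Hom(M,K_{j+1})\to \Hom(M,F_j)\to \Hom(M,K_j)\to 0.$$
Here $\Hom(M,F_j)\cong (M^*)^{\oplus}$, and $M^*$ is MCM because $A$ is Gorenstein. So the same descending induction, starting from $\Hom(M,K_p)\cong (M^*)^{\operatorname{rank}F_p}$ of depth $d$, gives $\depth \Hom(M,N)\ge t$. For the dimension, $\operatorname{Supp}\Hom(M,N)\subseteq \operatorname{Supp}N$, so $\dim\Hom(M,N)\le t$. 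For nonvanishing, $\m\in\operatorname{Ass}$ fails in general, so I use a different route. Note $\Hom(M,N)\cong M^*\otimes N$: apply $-\otimes N$ to a free presentation of $M^*$ and compare with $\Hom(M,-)$ on the resolution, using $M\cong M^{**}$. Then nonvanishing follows exactly as in the tensor case with $M^*\neq 0$.

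The isomorphism $\Hom_A(M,N)\cong M^*\otimes_A N$ needs $\Ext^1_A(M,K)=0$ for modules $K$ of finite projective dimension, which Lemma \ref{torext0} supplies. If this turns out awkward to justify cleanly, the fallback is to use that $\operatorname{Supp}\Hom(M,N)=\operatorname{Supp}M\cap\operatorname{Supp}N$ when $\operatorname{Supp} M=\operatorname{Spec} A$.

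The main obstacle I expect is exactly this nonvanishing and dimension bookkeeping for $\Hom$, since the depth estimate itself is routine.
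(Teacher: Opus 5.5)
Your proof is correct, but it takes a different route from the paper's. The paper inducts on $t=\dim N$. It takes an $N$-regular $x\in\m$, notes that $N/xN$ is perfect of dimension $t-1$, and uses $\Tor_1^A(M,N/xN)=0=\Ext^1_A(M,N/xN)$ (Lemma \ref{torext0}) to conclude that $x$ is regular on $M\otimes_A N$ and on $\Hom_A(M,N)$, with quotients $M\otimes_A N/xN$ and $\Hom_A(M,N/xN)$. It handles the nonvanishing of $\Hom_A(M,N)$ separately via the new intersection theorem: $N$-regular elements are $A$-regular because $\pdim_A N<\infty$, whereas $\ann_A M$ has height $0$.

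You instead climb the finite free resolution of $N$ with the depth lemma, using only Auslander--Buchsbaum and Lemma \ref{torext0}. You get nonvanishing of $\Hom$ from $\Hom_A(M,N)\cong M^*\otimes_A N$, which needs no intersection theorem. That isomorphism is sound: the natural map $M^*\otimes_A X\to\Hom_A(M,X)$ is bijective for free $X$, and both functors take $0\to K_{j+1}\to F_j\to K_j\to 0$ to right exact sequences (for $\Hom$ because $\Ext^1_A(M,K_{j+1})=0$). Descending induction on $j$ from $K_p=F_p$ then gives it at $K_0=N$. In fact it makes your separate depth induction for $\Hom$ redundant, since it reduces the $\Hom$ claim to the tensor claim for the MCM module $M^*$.

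What the paper's route buys is compatibility with cutting down by regular elements. In particular it yields $\Hom_A(M,N/xN)\cong\Hom_A(M,N)/x\Hom_A(M,N)$ for $N$-regular $x$, which is reused in the proof of Theorem \ref{ext,I=m}.

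Keep the $M^*\otimes_A N$ argument rather than your fallback. The fallback assumes $\operatorname{Supp}M=\operatorname{Spec}A$, which fails for MCM modules in general, e.g.\ $A/(x)$ over $k[[x,y]]/(xy)$.
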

\begin{proof}
    If $\dim A=0$, then the assertions are trivial. Let $\dim A\ge 1$. We note that if $M,N$ are nonzero then so is $M \otimes_{A} N$. Also note that $\Hom_{A}(M,N)=0$ if and only if $\ann_{A}M $ contains an $N$-regular element. Since $\operatorname{pdim}_AN< \infty$, so by new intersection theorem, an $N$-regular element is also $A$-regular (see \cite[Theorem 9.4.7 and Remark 9.4.8]{Bruns and Herzog}). As $M$ is MCM, it implies that $\depth(\ann_AM,A)=\operatorname{ht} (\ann_AM)=0$. So $\ann_A(M)$ does not contain an $N$-regular element and  hence $\Hom_{A}(M,N)\neq 0$.

    We prove the results by induction on $t=\dim N$. When $t=0$, there is nothing to prove. Assume $t\geq1$. Let $x\in \m$ be $N$-regular. Then $N/xN$ is a perfect module of dimension $t-1$. By \ref{torext0} we have
$\Tor_{1}^{A}(M,N/xN)=0=\Ext_{A}^{1}(M,N/xN)$. The exact sequence
$
    0\longrightarrow N \xlongrightarrow{x} N\longrightarrow N/xN\longrightarrow 0 $
yields the following two exact sequences
$$0\longrightarrow  M \otimes_{A} N \xlongrightarrow{x}  M \otimes_{A} N \longrightarrow M \otimes_{A} (N/xN)\longrightarrow 0,
$$
 $$0\longrightarrow  \Hom_{A}(M,N) \xlongrightarrow{x}  \Hom_{A}(M,N)\longrightarrow \Hom_{A}(M,N/xN)\longrightarrow 0,$$
which implies that $x$ is both $M \otimes_{A} N$ and $\Hom_{A}(M,N)$-regular and $$ \Hom_{A}(M,\frac{N}{xN})\cong \frac{\Hom_{A}(M,N)}{x\Hom_{A}(M,N)}.$$ Also $M \otimes_{A} N/xN$ and $\Hom_{A}(M,N/xN)$ are Cohen-Macaulay of dimension ($t-1$) by induction hypothesis. The result follows.
    \end{proof}

 We now recall the notion of the \textit{multiplicity} of a module.
 \begin{definition} \normalfont Let $(A,\m)$ be a local ring and $M$ a finite $A$-module of dimension $d$. Then the \textit{multiplicity} of $M$ is defined as $$e(M)=\lim_{n \rightarrow \infty}\frac{d!}{n^d}\lambda\left(\frac{M}{\m^{n+1}M}\right).$$

 \end{definition}
 \medskip

 We next recall a result of Theodorescu that will play a central role in our discussion.
     \begin{point}\normalfont Let $\mathcal{R}(I)=A[Iu]=\bigoplus_{n\ge 0} I^n u^n$ denote the Rees algebra of $I$, where $u$ is an indeterminate serving as a placeholder for the grading.
 Let $\mathcal{F}_N(I)=\bigoplus_{n\ge 0} I^nN/\m I^nN$ be the fiber-cone of $I$ with respect to $N$, considered as a module over the Rees algebra $\mathcal{R}(I)$ of $I.$ Let $l_N(I) = \dim \mathcal{F}_N(I)$, the analytic spread of $I$ with respect to $N$.
We note that $l_N(I) \le \dim N$ and if $I$ is an ideal of definition of $N$, then $l_N(I) = \dim N$.
\medskip
The following result is due to Theodorescu .
\end{point}

\begin{theorem} (\cite[Corollary 4]{E.Theodorescu}) \label{Theodorescu}
   Let $(A, \mathfrak{m})$ be a Noetherian local ring.
Let $M, N$ be finitely generated $A$-modules and let $I$ be an ideal in $A$.
Fix $i \ge 0$.

\begin{enumerate} [\rm (I)]
    \item Assume $\lambda(\operatorname{Tor}^A_i(M, N / I^n N))$ is finite for all $n \ge 1$. Then
    \begin{enumerate} [\rm (i)]
        \item The function $n \mapsto \lambda(\operatorname{Tor}^A_i(M, N / I^n N))$ is of polynomial type, say of degree $r_{i}^{I,N}(M)$.
        \item $r_{i}^{I,N}(M) \le \max\{\dim \operatorname{Tor}^A_i(M, N),\, l_N(I) - 1\}$.
        \item If $\dim \operatorname{Tor}^A_i(M, N) \ge l_N(I)$, then the inequality in $\operatorname{(ii)}$ becomes an equality.
    \end{enumerate}

    \item Assume $\lambda(\operatorname{Ext}^i_A(M, N / I^n N))$ is finite for all $n \ge 1$. Then
    \begin{enumerate} [\rm (i)]
        \item The function $n \mapsto \lambda(\operatorname{Ext}^i_A(M, N / I^n N))$ is of polynomial type, say of degree $ s^{i}_{I,N}(M)$.
        \item $ s^{i}_{I,N}(M) \le \max\{\dim \operatorname{Ext}^i_A(M, N),\, l_N(I) - 1\}$.
        \item  If $\dim \operatorname{Ext}^i_A(M, N) \ge l_N(I)$, then the inequality in $\operatorname{(ii)}$ becomes an equality.
    \end{enumerate}
\end{enumerate}
\end{theorem}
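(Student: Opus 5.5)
Since this statement is quoted from \cite[Corollary 4]{E.Theodorescu}, I would re-derive it by a graded-module argument over the Rees algebra $\mathcal{R}(I)$. I treat part (I); part (II) is the same argument with $\Hom_A(F,-)$ in place of $F\otimes_A-$.

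Fix a resolution $F_\bullet\to M$ by finite free $A$-modules and set $\mathcal{R}(I,N)=\bigoplus_{n\ge0}I^nN$. Then $\bigoplus_n\Tor_j^A(M,I^nN)=H_j(F_\bullet\otimes_A\mathcal{R}(I,N))$ is a finitely generated graded $\mathcal{R}(I)$-module for every $j$, since $\mathcal{R}(I,N)$ is finitely generated over the Noetherian ring $\mathcal{R}(I)$. Apply $\Tor^A_*(M,-)$ to $0\to I^nN\to N\to N/I^nN\to0$ and put $T=\Tor_i^A(M,N)$. This gives a short exact sequence
$$0\to T/K_n\to\Tor_i^A(M,N/I^nN)\to W_n\to0,$$
where $K_n$ is the image of $\Tor_i^A(M,I^nN)\to T$ and $W_n$ is the kernel of $\Tor_{i-1}^A(M,I^nN)\to\Tor_{i-1}^A(M,N)$. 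Both $K=\bigoplus K_nu^n\subseteq T[u]$ and $W=\bigoplus W_n$ are finitely generated graded $\mathcal{R}(I)$-modules. Hence
$$\lambda(\Tor_i^A(M,N/I^nN))=\lambda(T/K_n)+\lambda(W_n),$$
and both summands are finite, being a submodule and a quotient of a finite-length module. If each summand is of polynomial type, then their sum is of polynomial type of degree equal to the larger of the two degrees. No cancellation can occur, because both functions are nonnegative and so have nonnegative leading coefficients.

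For the first summand: $K_0=T$, and $K$ is a finitely generated $\mathcal{R}(I)$-module. So $I^nT\subseteq K_n$ for all $n$, and there is a $c$ with $K_{n+c}=I^nK_c\subseteq I^nT$. Thus $\{K_n\}$ is a stable $I$-filtration of $T$. Finite length of $T/K_n$ forces $T/IT$ to have finite length. The usual Hilbert–Samuel theory then shows that $\lambda(T/K_n)$ is of polynomial type of degree exactly $\dim T$ whenever $T\neq0$.

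For the second summand I want to show that $W$ is killed by a power of $\m\mathcal{R}(I)$ and has dimension at most $l_N(I)$. Then $\lambda(W_n)$ is polynomial of degree $\le l_N(I)-1$, giving (i) and (ii). For (iii), if $\dim T\ge l_N(I)$, the first summand has the strictly larger degree $\dim T$, so equality holds.

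This bound on $W$ is the step I expect to be the main obstacle. I would first try an Artin–Rees argument on the complex $F_\bullet\otimes\mathcal{R}(I,N)\subseteq F_\bullet\otimes N[u]$. A cycle in $F_{i-1}\otimes I^nN$ that bounds in $F_\bullet\otimes N$ should bound in $F_\bullet\otimes I^{n-c}N$ for a fixed $c$. From this I would get $I^cW_n$-type annihilation relations, showing that $W$ is annihilated by $(I\mathcal{R}(I))^{c'}$ after a degree shift. That makes $W$ a finitely generated module over $\mathcal{R}(I)/I\mathcal{R}(I)=G_I(A)$. Next I would use that every $W_n$ has finite length, together with finite generation, to conclude that some power $\m^a$ kills $W$ in all large degrees. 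Then $W$ is a finitely generated module over $G_I(A)/\m^a G_I(A)$. Filtering it by powers of $\m$ reduces to modules over $\bigoplus I^n/\m I^n$, which is the fiber cone, and I would compare this with $\mathcal{F}_N(I)$ to bound the Krull dimension by $l_N(I)$. The delicate point is using $N$-data rather than $A$-data here: one should work with $\mathcal{R}(I,N)$ and the annihilator of $N$, so that the relevant fiber module is $\mathcal{F}_N(I)$ and its dimension is $l_N(I)$.
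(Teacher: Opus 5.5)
The paper contains no proof of this statement to compare against. It quotes Theodorescu's Corollary 4 and uses it only as a black box: once $\Tor_i^A(M,N)=0=\Ext^i_A(M,N)$ and $l_N(I)=t$, it yields $r_i^{I,N}(M),\,s^i_{I,N}(M)\le t-1$.

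Judged on its own, your argument is a correct proof.
\begin{itemize}
\item The splitting $0\to T/K_n\to\Tor_i^A(M,N/I^nN)\to W_n\to 0$ is sound, with $K$ and $W$ finitely generated graded $\mathcal{R}(I)$-modules.
\item $\{K_n\}$ is a stable $I$-filtration of $T$ with $\lambda(T/IT)<\infty$.
\item The no-cancellation remark then gives (i)--(iii).
\end{itemize}
For part (II) the connecting map lands in $\Ext^{i+1}_A(M,I^nN)$. So $W'_n=\ker\bigl(\Ext^{i+1}_A(M,I^nN)\to\Ext^{i+1}_A(M,N)\bigr)$, and $W'$ is a subquotient of $\Hom_A(F_{i+1},\mathcal{R}(I,N))$. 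Otherwise the argument carries over verbatim.

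The step you flag as the main obstacle is easier than your sketch.
\begin{itemize}
\item Annihilation. $W$ is finitely generated and every $W_n$ has finite length, so the finitely many generators are killed by some $\m^a$. Hence $\m^a W=0$ in all degrees, and the Artin--Rees detour is unnecessary. As stated, that step is also slightly off: it only gives $I^cW_n=0$ for $n\ge c$, so $W_{\ge c}$ is a module over $\mathcal{R}(I)/I^c\mathcal{R}(I)$, not over $G_I(A)$.
\item Dimension bound. $W$ is a subquotient of $F_{i-1}\otimes_A\mathcal{R}(I,N)\cong\mathcal{R}(I,N)^{b}$, so
$$\operatorname{Supp}W\subseteq \operatorname{Supp}\mathcal{R}(I,N)\cap V(\m\mathcal{R}(I))=\operatorname{Supp}\bigl(\mathcal{R}(I,N)/\m\mathcal{R}(I,N)\bigr)=\operatorname{Supp}\mathcal{F}_N(I).$$
Hence $\dim W\le l_N(I)$. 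Since $W$ is a finitely generated graded module over the standard graded ring $\mathcal{R}(I)/\m^a\mathcal{R}(I)$, whose degree-zero part is Artinian, $\lambda(W_n)$ is polynomial of degree $\dim W-1\le l_N(I)-1$.
\item Your route via $\bar A=A/\ann_AN$ and filtering by powers of $\m$ also works. It needs the extra fact that $l(I\bar A)=l_N(I)$. This holds because the annihilator of $\mathcal{R}(I,N)$ in $\mathcal{R}(I)$ is $\bigoplus_n(I^n\cap\ann_AN)u^n$, so $\mathcal{R}(I,N)$ is a faithful $\mathcal{R}(I\bar A)$-module. The support argument above avoids this detour.
\end{itemize}

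Finally, your claim that $\lambda(T/K_n)$ has degree exactly $\dim T$ when $T\neq 0$ uses $I\subseteq\m$. For $I=A$ all the lengths vanish. This is implicit in the statement but worth stating, since (iii) depends on it.
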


\begin{point}\normalfont \label{bound due to theodorscu}
     (With hypothesis as in 2.1)  For any integer $i\geq 1$, from \ref{torext0} we have $\Tor_i^A(M,N)=0=\Ext^i_A(M,N)$, therefore we obtain
     $$r_{i}^{I,N}(M)\leq t-1 \ \operatorname{and} \ s^{i}_{I,N}(M)\leq t-1.$$
Note that, if $\dim N=t=1$, then the functions $n \mapsto \Tor_i^A(M,N/I^{n+1}N)$ and $n \mapsto \Ext^i_A(M,N/I^{n+1}N)$ are eventually constant. We also examine the case where $t = \dim N \ge 2$, employing superficial and filter-regular elements.
\end{point}

\begin{point}\normalfont
  Assume $A$  is Gorenstein, $N$ is a perfect $A$-module and $I$ is an ideal of definition of $N$. Let $M$ be a MCM $A$-module. For $i \geq 1$ set
$$
L_i^{I,N}(M) = \bigoplus_{n \geq o} \text{Tor}_i^A(M, N/I^{n+1}N) \quad \text{and} \quad E_{I,N}^i(M) = \bigoplus_{n \geq 0} \text{Ext}^i_A(M, N/I^{n+1}N).
$$
\end{point}
The following result is proved in \cite[Proposition 9.5]{HypersurfaceII}
\begin{proposition} \label{LiEj}
  (With hypotheses as in 2.6) For $ i \geq 1 $, $ L_i^{I,N}(M)$ and $E_{I,N}^i(M)$ are finitely generated graded $ \mathcal{R}(I)$-modules.
\end{proposition}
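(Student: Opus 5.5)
Let $\mathcal{R}(I)=\bigoplus_{n\ge0}I^nu^n$ act on $L_i^{I,N}(M)$ and $E^i_{I,N}(M)$ degreewise via the maps induced by $N/I^{n+1}N\xrightarrow{a}N/I^{n+m+1}N$ for $a\in I^m$. Here $a\in I^m$ sits in degree $m$, and multiplication by $a$ on $N$ sends $I^{n+1}N$ into $I^{n+m+1}N$. To prove Proposition~\ref{LiEj}, I would realize these modules as subquotients of finitely generated $\mathcal{R}(I)$-modules.

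The key step is the short exact sequence of $\mathcal{R}(I)$-modules
$$0\longrightarrow \mathcal{R}(I,N)(1)\longrightarrow N[u]\longrightarrow L^I(N)\longrightarrow 0 .$$
Its terms are as follows.
\begin{itemize}
\item $N[u]=\bigoplus_{n\ge0}Nu^n$ is a (non-finitely generated) $\mathcal{R}(I)$-module.
\item $\mathcal{R}(I,N)(1)=\bigoplus_{n\ge0}I^{n+1}Nu^n$ is the shifted Rees module, which is finitely generated.
\item $L^I(N)=\bigoplus_{n\ge 0}N/I^{n+1}N$.
\end{itemize}
Apply $M\otimes_A-$ and $\Hom_A(M,-)$ degreewise; these functors are compatible with the $\mathcal{R}(I)$-action, since it is induced by $A$-linear maps of the $N$-components. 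The long exact sequences give, for $i\ge1$,
$$\Tor_i^A(M,N[u])\to L_i^{I,N}(M)\to \Tor_{i-1}^A(M,\mathcal{R}(I,N)(1))\to \Tor_{i-1}^A(M,N[u]),$$
and similarly
$$\Ext^i_A(M,N[u])\to E^i_{I,N}(M)\to \Ext^{i+1}_A(M,\mathcal{R}(I,N)(1)).$$

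By Lemma~\ref{torext0}, $\Tor_j^A(M,N)=0=\Ext^j_A(M,N)$ for $j\ge1$. Hence all the $N[u]$ terms vanish in positive homological degree.

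For Tor with $i\ge2$, this gives $L_i^{I,N}(M)\cong \Tor_{i-1}^A(M,\mathcal{R}(I,N))(1)$. Here $\Tor_{i-1}^A(M,-)$ is computed with a free resolution $F_\bullet$ of $M$, and $F_\bullet\otimes_A\mathcal{R}(I,N)$ is a complex of finitely generated $\mathcal{R}(I)$-modules. Because $\mathcal{R}(I)$ is Noetherian, its homology is finitely generated.

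For $i=1$, $L_1^{I,N}(M)$ is the kernel of $M\otimes\mathcal{R}(I,N)(1)\to M\otimes N[u]$, so it is a submodule of a finitely generated module and hence finitely generated.

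For Ext with $i\ge1$, $E^i_{I,N}(M)$ injects into $\Ext^{i+1}_A(M,\mathcal{R}(I,N))(1)$. That module is the cohomology of $\Hom_A(F_\bullet,\mathcal{R}(I,N))$, a complex of finite direct sums of copies of $\mathcal{R}(I,N)$, so it is finitely generated. Hence $E^i_{I,N}(M)$ is finitely generated as well.

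The main obstacle is verifying that every map above is $\mathcal{R}(I)$-linear and respects gradings. This means checking that the connecting homomorphisms commute with multiplication by $au^m$, which follows from naturality of the long exact sequence applied to the degree-shifting morphisms of short exact sequences. A second point to check is that finite generation is preserved by degreewise Tor and Ext, even though $N[u]$ is not finitely generated; the argument avoids needing this because the $N[u]$ terms vanish or only serve as targets.
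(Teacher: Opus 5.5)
Your proof is correct. The paper gives no argument of its own here; it simply quotes Proposition 9.5 of the hypersurface paper. What you wrote is the standard argument for that fact: use $0\to I^{n+1}N\to N\to N/I^{n+1}N\to 0$ degreewise, and let Lemma~\ref{torext0} kill $\Tor^A_{\ge 1}(M,N)$ and $\Ext_A^{\ge 1}(M,N)$. Then $L_i^{I,N}(M)$ and $E^i_{I,N}(M)$ become (submodules of) $\Tor$/$\Ext$ of $M$ against the finitely generated module $\mathcal{R}(I,N)_+$, which, up to the shift by $1$, is your $\bigoplus_{n\ge 0}I^{n+1}Nu^n$.

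The linearity issue you flag as the main obstacle is automatic if you compute everything with one fixed finite free resolution $F_\bullet$ of $M$. The sequence $0\to F_\bullet\otimes_A\mathcal{R}(I,N)_+(1)\to F_\bullet\otimes_A N[u]\to F_\bullet\otimes_A L^I(N)\to 0$, and its $\Hom_A(F_\bullet,-)$ analogue, is a short exact sequence of complexes of graded $\mathcal{R}(I)$-modules. So all connecting maps are graded $\mathcal{R}(I)$-linear.
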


We now recall the notions of superficial elements and filter-regular elements, which will be used in the subsequent discussion.

\begin{point}\normalfont
  An element $x \in I$ is $M$-superficial with respect to $I$ if there exists $c$ such that $(I^{n+1}M : x) \cap I^c M = I^n M$ for all $n\gg 0.$ Assume $\lambda(M/IM)$ is finite. If $\grade(I, M) > 0$ then it follows that $x$ is $M$-regular and $(I^{n+1}M : x) = I^n M $ for $n\gg 0$. If the residue field $k$ is infinite then $I$-superficial elements with respect to $M$ exist. In fact in this case there exists a non-empty open set $U_M$ in the Zariski topology of $I/\mathfrak{m}I$ such that if the image of $x$ is in $U_M$ then $x$ is $I$-superficial with respect to $M$.

   A sequence of elements $x_1,x_2,\ldots,x_s$ is called a superficial sequence for $M$ with respect to $I$ if $\bar{x_i}$ is superficial for $M/(x_1,\ldots,x_{i-1})M$ for $i=1,\ldots,s$.
\end{point}

\begin{point}\normalfont
     Let $E = \bigoplus_{n \ge 0} E_n$ be a finitely generated graded module over the Rees algebra $\mathcal{R} = A[It]$. Assume $E_n$ has finite length for all $n$. There exists $xt \in \mathcal{R}_1$ such that $xt$ is $E$-filter regular, that is, $(0 :_E xt)_n = 0 $ for $n\gg 0$. In fact in this case there exists a non-empty open set $U_E$ in the Zariski topology of $I/\mathfrak{m}I$ such that if the image of $x$ is in $U_E$ then $xt$ is $E$-filter regular.

      A sequence of elements $x_1,x_2,\ldots,x_s$ is called a filter-regular sequence for $E$  if $\bar{x_i}$ is filter-regular for $E/(x_1,\ldots,x_{i-1})E$ for $i=1,\ldots,s$.
\end{point}

We are now in a position to establish the following two exact sequences, which will be fundamental to the developments in the subsequent sections.

\begin{point}\normalfont \label{-1argu}
  Assume $t = \dim N \ge 2$ and $i,j$ are integers $\geq 1$ and assume $M$ is a MCM $A$-module. Also assume $A$ is Gorenstein and $N$ is a perfect $A$-module. Let $x \in I$ be such that it is $N \oplus (M \otimes N)$-superficial and $xt$ is $L_p^N(M)$, $E_q^N(M)$-filter regular for $p=i,i-1$ and $q=j,j+1$. We note that $(I^{n+1}N : x) = I^n N$ and $(I^{n+1}(M \otimes N) : x) = I^n (M \otimes N)$ for $n \gg 0$ (here we are using $M \otimes N$ is Cohen--Macaulay see \ref{homtensorCM}). We note that $\overline{N} = N/xN$ is a perfect $A$-module of dimension $t-1$. We have an exact sequence for $n \ge 1$
$$
0 \to \ker \alpha_n \to N/I^n N \xrightarrow{\alpha_n} N/I^{n+1} N \to \overline{N}/I^{n+1} \overline{N} \to 0,
$$
where $\alpha_n(a + I^n) = xa + I^{n+1}$. We note that $\ker \alpha_n = (I^{n+1}N : x)/I^n N = 0$ for $n \gg 0$. Thus for $n \gg 0$ we have an exact sequence
$$
\operatorname{Tor}_i^A(M, N/I^n N) \xrightarrow{\alpha_n^i} \operatorname{Tor}_i^A(M, N/I^{n+1} N) \to \operatorname{Tor}_i^A(M, \overline{N}/I^{n+1} \overline{N}) \to
$$
$$
\operatorname{Tor}_{i-1}^A(M, N/I^n N) \xrightarrow{\alpha_n^{i-1}} \operatorname{Tor}_{i-1}^A(M, N/I^{n+1} N)
$$
We note that $\ker \alpha_n^0 = (I^{n+1}(M \otimes N) : x)/I^n (M \otimes N) = 0$ for $n \gg 0$. Furthermore, the map $\alpha_n^p$ is the $n^{th}$-component of the multiplication map by $xt \in \mathcal{R}_1$ on $L_p^N(M)$. As $xt$ is $L_p^N(M)$-filter regular for $p=i,i-1$, it follows that $\ker \alpha_n^p = 0$ for $n \gg 0$. Thus for $n \gg 0$ we have an exact sequence
\end{point}

\begin{point}\normalfont \label{F1tor} (under the hypothesis stated above)
 $$
0 \to \operatorname{Tor}_i^A(M, N/I^n N) \xrightarrow{\alpha_n^i} \operatorname{Tor}_i^A(M, N/I^{n+1} N) \to \operatorname{Tor}_i^A(M, \overline{N}/I^{n+1} \overline{N}) \to 0.
$$
\end{point}

Similarly as $xt$ is also $E_q^N(M)$-filter regular for $q=j,j+1$, we have an exact sequence for $n\gg 0$
\begin{point}\normalfont \label{F2ext} (under the hypothesis stated above)
 $$
0 \to \operatorname{Ext}_A^j(M, N/I^n N) \xrightarrow{\beta_n^j} \operatorname{Ext}_A^j(M, N/I^{n+1} N) \to \operatorname{Ext}_A^j(M, \overline{N}/I^{n+1} \overline{N}) \to 0.
$$
\end{point}

Next we recall several results from \cite{Part1} that will be used in the subsequent sections.

\begin{point}\normalfont \label{superficialstarting}
  Let $J$ be an ideal of $A$ and $T$ a finite $A$-module. Assume grade($J,T)>0$. Set $$\widetilde{JT}=\bigcup _{i\ge 0}(J^{i+1}T:_T J^i).$$  It was proved in \cite{Ratliff-Rush} that $\widetilde{J^nT}=J^nT$ for all $n\gg 0$. Set
   $$\rho^J(T):=\operatorname{min}\{ i|\widetilde{J^nT}=J^nT \ \operatorname{for} \ \operatorname{all} \ n\ge i\}.$$
   If $x\in J$ is $T$-superficial with respect to $J$, then we get $(J^{n+1}T:_Tx)=J^nT$ for all $n\gg 0$. Set
   $$\rho^J(x,T):=\operatorname{min}\{ i|(J^{n+1}T:_Tx)=J^nT\ \operatorname{for} \ \operatorname{all} \ n\ge i\} .$$
Then we have $\rho^J(x,T)=\rho^J(T)$, thus $\rho^J(x,T)$ is independent of superficial elements (for a proof see  \cite[Corollary 2.7]{Part1}).

Let $M$ be a non-free MCM $A$-module. Then (with the hypothesis as in \ref{notation}), it was proved in \cite[Proposition 4.7]{Part1} that
$$H^0_\mathfrak{M}(L^I(M\otimes N))=\bigoplus_{i=0}^{\rho^I(M\otimes N)-1} \frac{\widetilde{I^{i+1}(M\otimes N)}}{I^{i+1}(M\otimes N)}.$$
If $x\in I$ is $M\otimes N$-superficial with respect to $I$, then for all $n\ge \operatorname{end}(H^0_\mathfrak{M}(L^I(M\otimes N)))$, we have
$$ I^{n+1}(M\otimes N):x=I^n(M\otimes N).$$

\end{point}

\begin{point}\normalfont
     Let $(R_0,\m_0)$ be a local ring and  $R=\bigoplus_{n\ge 0}R_n $ be a standard graded $R_0$-algebra, that is, $R$ is generated over $R_0$ by finitely many elements of degree 1. Let $L$ be a (not necessarily finitely generated) graded $R$-module. Define $\operatorname{end}(L)=\sup \{n\in \Z|L_n\ne 0\}$. Recall if $L$ is a *-Artinian $R$-module then $\operatorname{end}(L)<\infty$ (see \cite[Lemma 1.10]{Part1}). Set $R_+=\bigoplus_{n\ge1}R_n$. If $E$ is a finitely generated $R$-module, then for each $i\ge 0$ we have $H^i_{R_+}(E)_n=0$ for all $n\gg 0$, which implies $\operatorname{end}(H^i_{R_+}(E))<\infty$ for all $i\ge 0$ (cf. \cite[Theorem 15.1.5]{Brodman-Sharp}).
\end{point}

\begin{point}\normalfont \label{b_j<a_j+1}
  Let $X$ be a finite $A$-module. Set $L^I(X)=\bigoplus _{n\ge 0}X/I^{n+1}X$, and for an integer $i\ge 1$, set $L_i^{I,N}(X)=\bigoplus _{n\ge 0}\Tor_i^A(X,N/I^{n+1}N)$. We have proved in \ref{LiEj} that the modules $L_i^{I,N}(X)$ are finitely generated  graded  $\mathcal{R}(I) $-modules for $i\ge 1$. Note that $L^I(X)$ is \textbf{not} a finitely generated $\mathcal{R}(I)$-module.

  Set $\mathfrak{M}=\m \oplus \mathcal{R}(I)_+$. It was proved in \cite[Proposition 4.4]{Part1} that $H^i_{\mathfrak{M}}(L^I(X))$ is *-Artinian for $0\le j \le \operatorname{depth}X-1$. For $j=0,\ldots ,\operatorname{depth}X-1$, we define the following invariants $$b_j^I(X):=\operatorname{end}(H^j_\mathfrak{M}(L^I(X)).$$
  Since $H^j_\mathfrak{M}(G_I(X))$ is *-Artinian we can define
  $$a_j^*(G_I(X)):=\operatorname{end}(H^j_\mathfrak{M}(G_I(X))) \ \operatorname{for} \ j\ge 0.$$
Recall $a_j(G_I(X))=\operatorname{end}(H^j_{G_I(A)_+}(G_I(X)))$. Assume $I$ is an ideal of definition of $X$, then we have
$$
   a_j^*(G_I(X))=a_j(G_I(X)).
$$
The natural maps $0\longrightarrow I^nX/I^{n+1}X\longrightarrow X/I^{n+1}X\longrightarrow X/I^nX\longrightarrow 0$ induce an exact sequence of $\mathcal{R}(I)$-modules
$$
    0\longrightarrow G_I(X)\longrightarrow L^I(X)\longrightarrow L^I(X)(-1)\longrightarrow 0.
$$
So we have the following exact sequence
$$0=H^j_\mathfrak{M}(L^I(X))_{b^I_j(X)+1}\longrightarrow H^j_\mathfrak{M}(L^I(X))_{b^I_j(X)}\longrightarrow H^{j+1}_\mathfrak{M}(G_I(X))_{b^I_j(X)+1}, $$
therefore we obtain $H^{j+1}_\mathfrak{M}(G_I(X))_{b^I_j(X)+1} \neq 0$, and so we have
$$
    b_j^I(X)\le a^*_{j+1}(G_I(X))-1=a_{j+1}(G_I(X))-1 \ \operatorname{for}\ 0\le j\le \operatorname{depth}X-1.
$$
\end{point}

We now recall the definition of the complexity of a module, along with some elementary properties.

\begin{point}\normalfont
    The notion of complexity was introduced by Avramov in \cite{cxll}. For a finitely generated module $M$ over a local ring $(A,\m)$, the \textit{complexity} of $M$ is defined as
    $$
\mathrm{cx}_AM = \inf \left\{ d \in \mathbb{N} \,\middle|\,
\begin{array}{l}
\text{there exists a polynomial } f(X) \text{ of degree } d - 1 \\
\text{such that } \beta_n^A(M) \le f(n) \text{ for } n \ge 0
\end{array}
\right\},
$$
where $\beta_n^A(M)=\dim_k\Tor_n^A(M,k)$ is the $n^{\operatorname{th}}$-Betti number of $M$ over $A$. Note that $\cx_A M=0$ if and only if $\pdim_AM<\infty$. Furthermore $\cx_A M\le 1$ if and only if $M$ has bounded Betti numbers. If $A$ is a local complete intersection, then $\cx_AM\le \codim A$ (see \cite[Corollary 4.2]{cxcodim}). If $M$ is a MCM $A$-module with no free summand and $\cx_A M = 1$, then $M$ is $2$-periodic; that is, $M \cong \Omega^2 M$ (see \cite[Theorem 9.2.1]{cx1}).

\end{point}

We also recall the notion of \textit{multiplicity} of a module.
\begin{point}
    \normalfont Let $(A,\m)$be a local ring. The \textit{multiplicity} of a finite $A$-module $M$ of dimesnion $d$ is defined as
    $$e(M)=\operatorname{lim}_{n\rightarrow \infty}\frac{d!}{n^d}\lambda\left(\frac{M}{\m^nM}\right).$$
    Suppose we have an exact sequence $0\longrightarrow M'\longrightarrow M\longrightarrow M''\longrightarrow 0$, where all three modules have same dimension. Then  $e(M)=e(M')+e(M'')$; see \cite[Corollary 4.7.7]{Bruns and Herzog}. Note that $e(M)>0$, provided $ M \neq 0$. Also, if $x\in \m$ is $M$-superficial with respect to $\m$, then we have $e(M)=e(M/xM)$.
\end{point}

We conclude this section by briefly discussing the triangulated category $\underline{\operatorname{CM}}(A)$ and some of its properties.

    \begin{point} \textbf{The Triangulated category} \normalfont $\underline{\operatorname{CM}}(A)$.\\
  We use \cite{Neeman} for notation on triangulated categories.  Let $(A,\m)$ be a Goresntein local ring. Let $\operatorname{CM}(A)$ denote the full subcategory of finitely generated MCM $A$-modules and let $\operatorname{\underline{CM}(A)}$ denote the stable category of MCM $A$-modules.  Recall that objects in $\operatorname{\underline{CM}(A)}$ are same as objects in $\operatorname{CM}(A)$. However the set of morphisms of $\operatorname{\underline{Hom_A}(M,M')}$ between two MCM $A$-modules $M$ and $M'$ is $=\operatorname{Hom}_A(M,M')/P(M,M')$, where $P(M,M')$ is the set of $A$-linear maps from $M$ to $M'$ which factor through a finitely generated free module. It is well-known that $\operatorname{\underline{CM}(A)}$ is a triangulated category with translation functor $\Omega^{-1}$ (see \cite{CM(A)}). Also recall that an object $M$ is zero in $\operatorname{\underline{CM}(A)}$ if and only if it is free considered as an $A$-module. Furthermore $M\cong M'$ in  $\operatorname{\underline{CM}(A)}$  if and only if there exists finitely generated free modules $F,G$ with $M\oplus F\cong M'\oplus G$ as $A$-modules. A distinguished triangle in $\operatorname{\underline{CM}(A)}$ has the form $X\longrightarrow Y\longrightarrow Z\longrightarrow \Omega^{-1}X$, (for more detailed discussion see \cite[4.7]{CM(A)} and \cite[2.11]{StableCategory_Goresntein}). For the definition of thick subcategories see \cite{Neeman}.
\end{point}

We now state a property of thick subcategories of $\CMS(A)$.

\begin{lemma} \label{thichkcx1}
    Let $(A,\m)$ be a local non-regular, complete intersection ring. Then any nonzero thick subcategory of $\CMS(A)$ contains a MCM $A$-module $E$ with  $\cx_AE=1$.
\end{lemma}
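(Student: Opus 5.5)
Let $\mathcal{T}$ be a nonzero thick subcategory of $\CMS(A)$. The plan is to start from any module in $\mathcal{T}$, pass to a nonzero object of minimal positive complexity, and then cut its complexity down one step at a time with a triangle that stays inside $\mathcal{T}$.

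First, $\mathcal{T}$ is nonzero, so it contains an MCM module $M$ that is not free. Since $M$ is MCM over a Gorenstein ring and not free, $\pdim_A M=\infty$; otherwise Auslander--Buchsbaum would force $M$ to be free. Hence $\cx_A M\ge 1$. Over a complete intersection we also have $\cx_A M\le \codim A<\infty$. So the set of complexities of nonzero objects of $\mathcal{T}$ is a nonempty set of positive integers. Choose $M\in\mathcal{T}$ nonzero with $c=\cx_A M$ minimal. The goal is to show $c=1$.

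Suppose instead that $c\ge 2$. The key tool is the theory of cohomology operators (Eisenbud operators) and support varieties over complete intersections, due to Avramov and Buchweitz. It lets us produce a module of complexity exactly $c-1$ inside $\mathcal{T}$.

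1. I would first reduce to the case where $A$ is complete with infinite residue field, using the flat extensions of \ref{basechange} together with completion. These extensions preserve complexity, and thickness is respected because we only ever take cones of maps and syzygies.
2. Write $\widehat A=Q/(f_1,\dots,f_c')$ with $(f)$ a regular sequence. There is a central element $\chi\in\Ext^2$ (a cohomology operator of degree $2$), viewed as a morphism $\Omega^{2}M\to M$ in $\CMS(A)$, or equivalently $M\to\Omega^{-2}M$.
3. Its cone $K$ lies in $\mathcal{T}$, since $\mathcal{T}$ is closed under shifts and cones. The cone fits into an exact sequence $0\to M\to K'\to \Omega^{-1}M\to 0$ up to free summands.
4. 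Choosing $\chi$ generic, which is possible because $k$ is infinite, ensures that the support variety of $K$ is the support variety of $M$ cut by a hyperplane. Hence $\cx_A K=c-1\ge 1$.

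In particular $K\neq 0$ in $\CMS(A)$, and this contradicts the minimality of $c$.

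The main obstacle is step 4: guaranteeing that the cone of a suitable degree-two operator really lowers complexity by exactly one and stays nonfree. I would invoke the Avramov--Buchweitz result that $\cx$ equals the dimension of the support variety $V(M)$, together with the fact that $V(\operatorname{cone}\chi)=V(M)\cap Z(\chi)$. Taking $\chi$ not vanishing identically on any component of $V(M)$ gives dimension $c-1$.

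If one prefers to avoid varieties, there is an alternative. Bergh's theorem on the existence of a short exact sequence $0\to \Omega^{n}M\to K\to M\to 0$ with $\cx K=\cx M-1$ over complete intersections, where $K$ is built as a pullback along an Eisenbud operator, gives the same reduction. Since $K$ is the cone of a morphism between objects of $\mathcal{T}$ up to shift, it lies in $\mathcal{T}$. Iterating, or directly using minimality, we obtain $E\in\mathcal{T}$ with $\cx_A E=1$.
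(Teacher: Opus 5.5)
Your overall argument is the paper's. The paper takes a non-free $M$ in the thick subcategory and invokes, as a black box, an exact sequence $0\to C\to\Omega^{n_0+2r}M\to\Omega^{n_0}M\to 0$ with $\cx_A C=\cx_A M-1$. It then observes that $C$ lies in the subcategory because the sequence gives a triangle in $\CMS(A)$, and inducts on complexity. Arguing by minimal complexity instead of by induction is cosmetic. Your closing route via Bergh's complexity-reducing sequence is a correct proof at the same level of citation: $K$ is MCM as an extension of MCM modules, it lies in $\mathcal{T}$ by the triangle and closure under $\Omega^{\pm 1}$, and it is non-free since $\cx_A K\ge 1$.

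Your primary (support-variety) route should not be used as written, because step 1 is not a valid reduction. The subcategory $\mathcal{T}$ lives in $\CMS(A)$. Once you pass to $A[X]_{\m A[X]}$ or to a completion and choose an operator generically there, its cone is a module over the new ring. Nothing makes that cone the base change of an object of $\mathcal{T}$. The remark that ``we only take cones and syzygies'' does not help, because the morphism being coned is not defined over $A$.

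What makes the argument work over $A$ itself is a cohomology operator of degree $2r$, with $r$ possibly larger than $1$, realized by an element of $\Ext^{2r}_A(M,M)$. The larger degree is needed because over a finite residue field a homogeneous parameter need not exist in the lowest degree. This is the shape of the sequence the paper cites (note the shift $\Omega^{n_0+2r}$), and it is what Bergh's theorem supplies directly over $A$. So no base change is needed, and the generic linear choice in step 4 should be dropped along with step 1.
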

\begin{proof}
    Let $T$ be a nonzero thick subcategory of $\CMS(A)$. Pick a nonzero non-free MCM $A$-module $M$ in $T$, therefore $\operatorname{cx}_AM\ge 1$. If $\operatorname{cx}_AM=1$, we have nothing to prove. Assume $\operatorname{cx}_AM\ge 2$, so there exist integers $n_0$ and $r$ such that we have an exact sequence $0\longrightarrow C \longrightarrow \Omega^{n_{0}+2r}M\longrightarrow \Omega^{n_0}M\longrightarrow 0$, with $\operatorname{cx}_AC=\operatorname{cx}_AM-1$ (this is essentially contained in \cite[Theorem 3.8]{StableCategory_Goresntein}). Since every short exact sequence in $\CMS(A)$ induces an exact triangle, so we obtain an exact triangle $C\longrightarrow \Omega^{n_{0}+2r}M\longrightarrow \Omega^{n_0}M\longrightarrow \Omega^{-1}C$ in $\CMS(A)$. As $T$ is closed under triangles, it follows that $C\in T$. Hence, by induction, so we obtain a MCM $A$-module $E\in T$ such that $\operatorname{cx}_AE=1$.
\end{proof}

\section{Bounds and Estimates} \phantomsection \label{section 3}
In this section, we aim to investigate the bound established in \ref{bound due to theodorscu}. We adopt the convention that the degree of the zero polynomial is $-1$.
\medskip

 \begin{setup}\normalfont \label{setup}
   Throughout this section $(A,\m,k)$ is a non-regular Gorenstein local ring of dimension $d\ge 1$ which is not regular and $N$ is a perfect module of dimension $t\geq 1$ and $I$ is an ideal of definition of $N$. For a non-free MCM $A$-module $M$ and a positive integer $i$, we set  $r_{i}^{I,N}(M)$, $s^{i}_{I,N}(M)$, $r_i^N(M)$ and $s^i_N(M)$ as in \ref{notation}. Using \ref{basechange} we can assume the residue field $k$ is infinite.

 \end{setup}
We now state a lemma that is essential for proving our bound.
 \begin{lemma} \label{-1case} (With hypothesis as in \ref{setup})
     Let $i\geq 1$ be an integer. If either $r_{i}^{I,N}(\Omega^dk)=-1$ or $s^{i}_{I,N}(\Omega^dk)=-1$, then for any integer $j\geq 1$  and MCM $A$-module $M$, we have
\begin{equation*}
     r_{j}^{I,N}(M)=-1=s^{j}_{I,N}(M).
\end{equation*}
 \end{lemma}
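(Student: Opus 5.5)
The plan is to reduce the hypothesis on $\Omega^dk$ to a statement about the residue field, and from that deduce that the modules $N/I^nN$ have finite projective dimension for all $n\gg 0$. Once that is known, Lemma \ref{torext0} will kill all the relevant $\Tor$ and $\Ext$ modules at once. Throughout, recall that a degree of $-1$ means the corresponding length function is identically zero for $n\gg 0$.

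\textbf{Step 1 (dimension shifting).} Fix $n$ and set $X_n=N/I^nN$, a module of finite length. From the exact sequences $0\to\Omega^{l+1}k\to F_l\to\Omega^lk\to 0$ with $F_l$ free, we get
$$\Tor_i^A(\Omega^dk,X_n)\cong\Tor_{i+d}^A(k,X_n)\quad\text{and}\quad \Ext^i_A(\Omega^dk,X_n)\cong\Ext^{i+d}_A(k,X_n)\qquad\text{for } i\ge 1.$$

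\textbf{Step 2 (finite homological dimension).} Here the two hypotheses are treated separately.

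Suppose $r_i^{I,N}(\Omega^dk)=-1$. Then $\Tor_{i+d}^A(k,X_n)=0$ for all $n\gg0$. Since vanishing of a single $\Tor_m^A(k,X)$ forces $\pdim_A X<m$ (the minimal free resolution stops), this gives $\pdim_AX_n<\infty$ for $n\gg0$.

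Suppose instead $s^i_{I,N}(\Omega^dk)=-1$. Then $\Ext^{i+d}_A(k,X_n)=0$ for $n\gg0$. I will invoke the standard fact that if $\Ext^m_A(k,X)=0$ for some $m\ge\depth X$, then $\operatorname{idim}_AX<m$ (Fossum--Foxby--Griffith--Reiten / Roberts). It applies here because $\depth X_n=0\le i+d$. So $\operatorname{idim}_AX_n<\infty$. Since $A$ is Gorenstein, this is equivalent to $\pdim_AX_n<\infty$ \cite[Exercise 3.1.25]{Bruns and Herzog}.

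The main obstacle is this $\Ext$ case. If one prefers to avoid the Fossum--Foxby--Griffith--Reiten result, an alternative is available when $X_n$ has finite length: use Matlis duality $\Ext^m_A(k,X_n)^\vee\cong\Tor_m^A(k,X_n^\vee)$ over the completion. This gives $\pdim X_n^\vee<\infty$, and over a Gorenstein ring this is equivalent to $\operatorname{idim}X_n<\infty$, hence to $\pdim X_n<\infty$.

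\textbf{Step 3 (conclusion).} In either case, $X_n=N/I^nN$ has finite projective dimension (equivalently, finite injective dimension) for all $n\gg0$. Let $M$ be any MCM $A$-module and $j\ge1$. Lemma \ref{torext0} gives
$$\Tor_j^A(M,N/I^nN)=0=\Ext^j_A(M,N/I^nN)\quad\text{for } n\gg0.$$
Hence both length functions are eventually zero, and $r_j^{I,N}(M)=-1=s^j_{I,N}(M)$.
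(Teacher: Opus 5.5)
Your proposal is correct and follows the paper's proof essentially step for step. You dimension-shift to $\Tor_{i+d}^A(k,N/I^nN)$ or $\Ext^{i+d}_A(k,N/I^nN)$, deduce from the vanishing Betti (resp.\ Bass) number that $N/I^nN$ has finite projective dimension (resp.\ finite injective dimension, hence finite projective dimension since $A$ is Gorenstein) for $n\gg 0$, and finish with Lemma \ref{torext0}. In the $\Ext$ case the paper cites a Bruns--Herzog exercise where you cite Fossum--Foxby--Griffith--Reiten/Roberts or Matlis duality; these amount to the same thing because $N/I^nN$ has finite length. Your conclusion $\operatorname{idim}_A(N/I^nN)<\infty$ is the right reading of the paper's ``$\operatorname{idim}_A(N/I^nN)=0$''.
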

\begin{proof}
    Assume $r_{i}^{I,N}(\Omega^dk)=-1$. Then for $n\gg 0$ we have \begin{equation*}
        \operatorname{Tor}_{i}^{A}(\Omega^dk,N/I^nN)=\operatorname{Tor}_{i+d}^{A}(k,N/I^nN)=\beta_{i+d}(N/I^nN)=0.
    \end{equation*}
    Therefore, $\operatorname{pdim}_A(N/I^nN)< \infty$ for $n\gg 0$.

    If $s^{i}_{I,N}(\Omega^dk)=-1$, then for $n\gg 0$ we have
    \begin{equation*}
        \operatorname{Ext}_{A}^{i}(\Omega^dk,N/I^nN)=\operatorname{Ext}^{i+d}_{A}(k,N/I^nN)=\mu ^{i+d}(\m,N/I^nN)=0.
    \end{equation*}
    Therefore, $\operatorname{idim}_{A}(N/I^nN)=0$ for $n\gg 0$ (see \cite[Exercise 3.5.12]{Bruns and Herzog}), and hence $\operatorname{pdim}_A(N/I^nN)< \infty$ for $n\gg 0$. Thus, in either case, the conclusion follows from \ref{torext0}.
\end{proof}
We now establish our bound.

\begin{theorem} \label{abound} (With hypothesis as in \ref{setup}) Let $M$ be a non-free MCM $A$-module. Let $i\ge 1$ be an integer. Then we have
\begin{enumerate} [\rm (i)]
    \item $ r_i^{I,N}(\Omega^dk)=r_1^{I,N}(\Omega^dk)$,
    \item $s^i_{I,N}(\Omega^dk)=s^1_{I,N}(\Omega^dk),$
    \item $r_1^{I,N}(\Omega^dk)=s^1_{I,N}(\Omega^dk)$,
    \item $r_i^{I,N}(M)\le r_1^{I,N}(\Omega^dk)\le t-1$,
    \item $s^i_{I,N}(M)\le s^1_{I,N}(\Omega^dk)\le t-1$.
\end{enumerate}
\end{theorem}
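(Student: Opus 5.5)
Everything will be reduced to Betti and Bass numbers of the modules $N_n:=N/I^nN$. Since $\Omega^dk$ is MCM, dimension shifting gives
\[\Tor_i^A(\Omega^dk,N_n)=\Tor_{i+d}^A(k,N_n),\qquad \Ext^i_A(\Omega^dk,N_n)=\Ext^{i+d}_A(k,N_n),\]
so $r_i^{I,N}(\Omega^dk)$ is the degree of $n\mapsto\beta_{i+d}(N_n)$, and $s^i_{I,N}(\Omega^dk)$ is the degree of $n\mapsto\mu^{i+d}(\m,N_n)$. The bounds $\le t-1$ in (iv) and (v) are already given by \ref{bound due to theodorscu}. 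The plan is therefore to prove (i)--(iii) and the comparison inequalities. By \ref{basechange} we may take $k$ infinite. I would first treat the case $t=1$ and then induct on $t$ using the sequences \ref{F1tor} and \ref{F2ext}.

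\textbf{Base case $t=1$.} All the functions are eventually constant, so every degree lies in $\{-1,0\}$, and a degree is $0$ exactly when the module is nonzero for $n\gg0$. By Lemma \ref{-1case}, if one of $r_i^{I,N}(\Omega^dk)$ or $s^i_{I,N}(\Omega^dk)$ equals $-1$, then all $r_j,s^j$ of all MCM modules equal $-1$; this already gives every statement in that case. Otherwise, $\beta_{i+d}(N_n)\neq0$ and $\mu^{i+d}(\m,N_n)\ne0$ for every $i\ge1$ and $n\gg0$. To see this, suppose such a number vanishes for a single large $n$. Then $\pdim N_n<\infty$ (or $\operatorname{idim}N_n<\infty$, which over a Gorenstein ring is equivalent). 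But then $\beta_{j+d}(N_n)=0$ for all $j$, so all the $r$'s vanish eventually by \ref{torext0}. The one subtle point is that the vanishing must hold for all large $n$ simultaneously. For this I would use that $\{n:\pdim N_n<\infty\}$ is eventually either everything or nothing, which follows from the eventual polynomiality (eventual constancy) of $\beta_{1+d}(N_n)$. Hence in this case all four quantities equal $0$, and $r_i(M),s^i(M)\le0$ trivially. This settles (i)--(v) for $t=1$.

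\textbf{Inductive step, $t\ge2$.} Choose $x\in I$ generic, so that \ref{F1tor} and \ref{F2ext} hold simultaneously for $M$, for $\Omega^dk$, and for the finitely many indices involved. This is possible since there are finitely many nonempty Zariski-open conditions and $k$ is infinite. Set $\overline N=N/xN$, which is perfect of dimension $t-1$. The exact sequences show that the first difference of $n\mapsto\lambda(\Tor_i^A(M,N_{n+1}))$ equals $\lambda(\Tor_i^A(M,\overline N/I^{n+1}\overline N))$ for $n\gg0$. Hence $r_i^{I,N}(M)=r_i^{I,\overline N}(M)+1$, provided the right side is not $-1$, and the same holds for Ext.

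The delicate case is when the $\overline N$-degree is $-1$. Then the $N$-function is eventually constant, and we must decide whether it is $0$ or $-1$. I expect this to be the main obstacle. My approach is to note that degree $-1$ for $\overline N$ and $\Omega^dk$ forces, via Lemma \ref{-1case}, all functions for $\overline N$ to vanish. That makes $\pdim \overline N/I^n\overline N<\infty$ for $n\gg0$. Then I would show, using the exact sequence $0\to N_n\xrightarrow{x}N_{n+1}\to\overline N/I^{n+1}\overline N\to0$ and depth/Betti comparison, that the uniform choice of $x$ makes the constant values for $\Omega^dk$ under Tor and Ext be simultaneously zero or simultaneously nonzero. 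Concretely, I would argue that both are zero iff $\pdim N_n<\infty$ for large $n$. This uses the same Auslander--Buchsbaum/Bass characterisation as in the base case.

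With this in hand, (i)--(iii) follow by induction, since each quantity is obtained from the corresponding quantity for $\overline N$ by the same rule. For (iv) and (v), the inequality $r_i^{I,\overline N}(M)\le r_1^{I,\overline N}(\Omega^dk)$ lifts to $N$, because adding one preserves inequalities, and in the $-1$ case Lemma \ref{-1case} handles $M$.
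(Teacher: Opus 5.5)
Your proposal is correct and follows essentially the same route as the paper. The case $t=1$ comes from the Theodorescu bound together with Lemma~\ref{-1case}, and the induction on $t$ uses a generic superficial, filter-regular element $x$ with the exact sequences \ref{F1tor} and \ref{F2ext}.

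The ``delicate case'' you anticipate is not a real obstacle. The paper avoids it by first applying Lemma~\ref{-1case} to $N$ itself, so all the relevant degrees may be assumed $\ge 0$ and then drop by exactly one on passing to $N/xN$. Your eventual resolution (the constants vanish iff $\pdim_A N/I^nN<\infty$ for $n\gg 0$) is precisely that lemma, so the exact-sequence and depth comparison you propose is unnecessary.
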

\begin{proof}(i) We write $r_j(N)$ to denote the value $r_{j}^{I,N}(\Omega^dk)$. We apply induction on $t=\operatorname{dim} N$. Suppose $t=1$, by \ref{bound due to theodorscu} $r_{i}(N)\leq 0$. Using \ref{-1case} we obtain, $r_i(N)=-1$ if and only if  $r_{1}(N)=-1$, hence for $t=1$ the result holds. Assume $t\geq 2$. By 3.3. we can assume both $r_i(N)$ and $r_1(N)$ are $\geq 0$. Let $x \in I$ be such that it is $N \oplus (\Omega^dk ) \otimes N)$-superficial and $xt$ is $L_p^N( \Omega^dk)$, -filter regular for $p=1,i,i-1$. Using the short exact sequence in \ref{F1tor} , we obtain  $r_i(N/xN)=r_i(N)-1$ and  $r_1(N/xN)=r_1(N)-1$. Note that $N/xN$ is a perfect module of dimension $t-1$, hence by induction hypothesis the result follows.

 (ii) is proved in similar lines as in (i).

(iii) We write $r_1(N)$ and $s^1(N)$ to denote $r_{1}^{I,N}(\Omega^dk)$ and $s^{1}_{I,N}(\Omega^dk)$ respectively. We apply induction on $t=\dim N$. Suppose $t=1$, then by \ref{bound due to theodorscu} we obtain $r_1(N)\leq 0$, $s^1(N)\leq 0$. By \ref{-1case} $r_1(N)=-1$ if and only if $s^1(N)=-1$, so the result holds for $t=1$. Assume $t\ge 2$. Using \ref{-1case} we can assume none of $r_1(N)$ or $s^1(N)$ is $-1$. Let $x \in I$ be such that it is $N \oplus (\Omega^dk ) \otimes N)$-superficial and $xt$ is $L_1^N( \Omega^dk)$, $E_q^N( \Omega^dk)$-filter regular for $q=1,2$. Using the short exact sequences in \ref{F1tor} and \ref{F2ext}, we obtain $r_1(N/xN)=r_1(N)-1$ and $s^1(N/xN)=s^1(N)-1$. Since $N/xN $ is a perfect of dimension $t-1$, hence by induction hypothesis the result follows.

(iv) We apply induction on $t=\dim N$. The $t=1$ case follows from \ref{bound due to theodorscu}  and \ref{-1case} . Assume $t\ge 2$. If  $r_1^{I,N}(\Omega^dk)=-1$, then  $r_i^{I,N}(M)=-1$ (see \ref{-1case}). If $r_i^{I,N}(M)=-1$,  the assertion is trivial. Therefore we can assume none of  $r_1^{I,N}(\Omega^dk)$ and $r_i^{I,N}(M)$ is $-1$. Let $x \in I$ be such that it is $N \oplus ((\Omega^dk \oplus M) \otimes N)$-superficial and $xt$ is $L_p^N(M \oplus \Omega^dk)$-filter regular for $p=1,i,i-1$. Using the short exact sequence in \ref{F1tor}, we obtain \begin{align*}
    r_1^{I,N/xN}(M)=r_1^{I,N}(M)-1, \\
    r_1^{I,N/xN}(\Omega^dk)=r_1^{I,N}(\Omega^dk)-1.
\end{align*}
  Since $N/xN$ is perfect of dimension $t-1$, so by induction hypothesis the result follows.

  (v) is proved in similar lines as in (iv).
\end{proof}

\section{Proof of Theorem II.} \phantomsection \label{section 4} In this section we give a proof of Theorem II. Before proceeding, we establish a few preliminary results that will be essential for the proof of our theorem.
\begin{lemma} \label{extomega}
    Let $(A,\m)$ be a $d$-dimensional Gorenstein local ring and $M$ a finite $A$-module of depth $s$ and dimension $r$. Let $i\ge 1$ be an integer such that $\{i,i+1\}\cap [d-r,d-s]=\emptyset$, then for any $A$-module $N$ (not necessarily finitely generated),  we have $\Ext^i_A(M,N)\cong \Ext^i_A(\Omega M,\Omega N)$.
\end{lemma}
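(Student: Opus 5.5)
The plan is to combine two dimension-shifting isomorphisms: one in the first variable, coming from a syzygy sequence for $M$, and one in the second variable, coming from a syzygy sequence for $N$. The hypothesis on $i$ is there to kill the Ext groups into free modules that would otherwise obstruct the second shift.

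For an arbitrary (not necessarily finitely generated) $A$-module $N$, I take $\Omega N$ to be the kernel in an exact sequence
$$0\longrightarrow \Omega N\longrightarrow F\longrightarrow N\longrightarrow 0$$
with $F$ free, possibly of infinite rank. If one prefers a projective $F$, Kaplansky's theorem makes it free over the local ring $A$. Similarly I fix
$$0\longrightarrow \Omega M\longrightarrow G\longrightarrow M\longrightarrow 0$$
with $G$ finite free.

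\textbf{Step 1 (shift in the first variable).} Since $\Ext^j_A(G,-)=0$ for $j\ge 1$, the long exact sequence from the second sequence gives
$$\Ext^{i+1}_A(M,X)\cong \Ext^i_A(\Omega M,X)$$
for every $A$-module $X$ and every $i\ge 1$. I will apply this with $X=\Omega N$.

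\textbf{Step 2 (vanishing of $\Ext^j_A(M,F)$).} This is the only point requiring thought. First, because $M$ is finitely generated over a Noetherian ring, it has a resolution by finite free modules. Hom out of a finite free module commutes with arbitrary direct sums, so for $F=A^{(\Lambda)}$ we get $\Ext^j_A(M,F)\cong \Ext^j_A(M,A)^{(\Lambda)}$. It therefore suffices to show that $\Ext^j_A(M,A)=0$ for $j\notin[d-r,d-s]$. This vanishing comes from two standard facts.
\begin{itemize}
\item Since $A$ is Cohen--Macaulay, $\grade(\ann M)=\operatorname{ht}(\ann M)=d-\dim M=d-r$, and $\Ext^j_A(M,A)=0$ for $j<\grade(\ann M,A)$ \cite[Theorem 1.2.5]{Bruns and Herzog}.
\item Since $A$ is Gorenstein, so $\operatorname{idim}A=d$, Ischebeck's formula (\cite[Exercise 3.1.24]{Bruns and Herzog}) gives $\Ext^j_A(M,A)=0$ for $j>\depth A-\depth M=d-s$.
\end{itemize}
I expect Step 2 to be the main obstacle, in the sense that it is where the hypothesis $\{i,i+1\}\cap[d-r,d-s]=\emptyset$ enters and where the non-finiteness of $N$ has to be handled. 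Once these vanishing facts are in hand, the remaining steps are formal.

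\textbf{Step 3 (shift in the second variable).} The long exact sequence for $\Hom_A(M,-)$ applied to $0\to\Omega N\to F\to N\to 0$ contains
$$\Ext^i_A(M,F)\longrightarrow \Ext^i_A(M,N)\longrightarrow \Ext^{i+1}_A(M,\Omega N)\longrightarrow \Ext^{i+1}_A(M,F).$$
By Step 2, both outer terms vanish because neither $i$ nor $i+1$ lies in $[d-r,d-s]$. Hence
$$\Ext^i_A(M,N)\cong\Ext^{i+1}_A(M,\Omega N).$$
Combining this with Step 1 for $X=\Omega N$, which is valid since $i\ge 1$, gives
$$\Ext^i_A(M,N)\cong \Ext^i_A(\Omega M,\Omega N),$$
as claimed.
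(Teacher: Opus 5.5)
Your proof is correct and follows the paper's argument essentially verbatim. Both first shift in the first variable via $0\to\Omega M\to A^{\mu(M)}\to M\to 0$, then shift in the second variable via $0\to\Omega N\to G\to N\to 0$, where the outer terms $\Ext^i_A(M,G)$ and $\Ext^{i+1}_A(M,G)$ vanish by the hypothesis on $i$. The paper gets this vanishing by citing Bruns--Herzog, Corollary 3.5.11. You instead derive it from grade and Ischebeck's formula, and you explicitly reduce the case of a free module of infinite rank to $\Ext^j_A(M,A)$, a point the paper leaves implicit.
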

\begin{proof}
  Apply $\Hom_A(-,\Omega N)$ to the exact sequence $0\longrightarrow \Omega M \longrightarrow A^{\mu(M)}\longrightarrow M\longrightarrow 0$ to obtain
  $
    \Ext^i_A(\Omega M,\Omega N)\cong \Ext^{i+1}_A(M,\Omega N).
  $
  Next, we apply $\Hom_A(M,-)$ to the exact sequence $0\longrightarrow \Omega N \longrightarrow G \longrightarrow N\longrightarrow 0$, where $G$ is free, to obtain the exact sequence $$\Ext^i_A(M,G)\longrightarrow \Ext^i_A(M,N)\longrightarrow \Ext^{i+1}_A(M,\Omega N)\longrightarrow \Ext^{i+1}_A(M,G).$$ By our assumption on $i$, we have $\Ext^i_A(M,G)=0=\Ext^{i+1}_A(M,G)$ (see \cite[Corollary 3.5.11]{Bruns and Herzog}), therefore we get
  $
      \Ext^i_A(M,N)\cong \Ext^{i+1}_A(M,\Omega N).
  $
  The result follows.
\end{proof}

\begin{lemma}   \label{Takahashi}
 Let $(A,\m,k)$ be a $d$-dimensional non-regular Gorenstein local ring and $E$  a MCM $A$-module. Suppose for an integer $n$, if we have $\Tor_1^A(\Omega^{n}\Omega^dk,E)=0$, then $E$ is free.
\end{lemma}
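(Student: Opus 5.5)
The plan is to move all the shifting onto $E$ and reduce the hypothesis to the vanishing of a single Betti number $\beta_{d+1}^A(\,\cdot\,)=\dim_k\Tor_{d+1}^A(k,\,\cdot\,)$. Vanishing of one Betti number forces finite projective dimension, and for an MCM module over a Gorenstein ring that means freeness. Two standard shifting isomorphisms do the work. For MCM modules $X,Y$ over the Gorenstein ring $A$ and $j\ge 1$ we have $\Tor_j^A(X,F)=0$ for $F$ free, so:
\begin{enumerate}[\rm (a)]
\item dimension shifting on the first variable gives $\Tor_{j+1}^A(X,Y)\cong \Tor_j^A(\Omega X,Y)$;
\item applying $\Tor^A(X,-)$ to the cosyzygy sequence $0\to Y\to A^{\tau}\to \Omega^{-1}Y\to 0$ of \ref{cosyzygy} gives $\Tor_{j+1}^A(X,\Omega^{-1}Y)\cong \Tor_j^A(X,Y)$.
\end{enumerate}

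\textbf{Case $n\ge 0$.} Iterating (a), and using that $\Omega^{n+d}k$ is the $(n+d)$-th syzygy of $k$, we get
$$\Tor_1^A(\Omega^n\Omega^dk,E)\cong \Tor^A_{n+d+1}(k,E).$$
So the hypothesis says $\beta_{n+d+1}^A(E)=0$. Hence $\pdim_AE<\infty$. Since $E$ is MCM, the Auslander--Buchsbaum formula gives $\pdim_A E=0$, so $E$ is free.

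\textbf{Case $n<0$.} Put $m=-n>0$ and $X=\Omega^{n}\Omega^dk$, which is MCM by \ref{cosyzygy}; recall $\Omega^m X\cong \Omega^dk$ up to free summands. Since $E\cong\Omega^m(\Omega^{-m}E)$ up to free summands, applying (b) $m$ times and then (a) $m$ times gives
$$\Tor_1^A(X,E)\cong \Tor_{1+m}^A(X,\Omega^{-m}E)\cong \Tor_1^A(\Omega^mX,\Omega^{-m}E)\cong \Tor_1^A(\Omega^dk,\Omega^{n}E)\cong \Tor_{d+1}^A(k,\Omega^nE).$$
Thus $\beta_{d+1}^A(\Omega^nE)=0$. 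As in the first case, the MCM module $\Omega^nE$ is then free. Finally $E\cong \Omega^{m}(\Omega^{n}E)$ up to free summands, and a syzygy of a free module is zero, so $E$ is free. Equivalently, $E$ is zero in $\CMS(A)$.

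The only point needing care is the bookkeeping of free summands when identifying $\Omega\Omega^{-1}Y$ with $Y$ and $\Omega^m\Omega^{-m}$ with the identity. Higher Tor is insensitive to free summands, and freeness of $E$ is a stable notion, so this should cause no real difficulty. I expect no genuine obstacle. The key observation is to shift by cosyzygies of $E$ rather than of $\Omega^dk$, so that we land on $\Tor_{d+1}^A(k,-)$ with a nonnegative index.
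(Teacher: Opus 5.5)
Your proof is correct, and it takes a genuinely different route from the paper's.

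\textbf{The paper's route.} For $n<0$ the paper leaves Tor for Ext. It uses the duality spectral sequence $\Ext^p_A(\Tor_q^A(\Omega^n\Omega^dk,E),A)\Rightarrow \Ext^{p+q}_A(\Omega^n\Omega^dk,E^*)$. Since $\Omega^n\Omega^dk$ is free on the punctured spectrum, the higher Tor's have finite length, and this turns $\Tor_1=0$ into $\Ext^{d+1}_A(\Omega^n\Omega^dk,E^*)=0$. It then moves the cosyzygies off the first argument with Lemma \ref{extomega}, reaching $\Ext^{2d+1}_A(k,\Omega^{-n}E^*)=0$. From this it concludes that $\Omega^{-n}E^*$ is free, hence $E^*$ is free, hence $E\cong E^{**}$ is free.

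\textbf{Your route.} You never leave Tor. For MCM modules and $j\ge 1$, a shift can be moved from one argument to the other: $\Tor_j^A(X,Y)\cong\Tor_j^A(\Omega^sX,\Omega^{-s}Y)$ up to free summands, for every $s\in\Z$. Applying this once lands you on the single Betti number $\beta^A_{d+1}(\Omega^nE)$.

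\textbf{What each buys.} Your argument needs no derived-category isomorphisms and no spectral sequence. It needs no finite-length input, so $\Omega^dk$ could be any MCM module $Z$; you are really proving $\Tor_1^A(\Omega^nZ,E)\cong\Tor_1^A(Z,\Omega^nE)$. It also avoids the detour through $E^*$ and $E^{**}$. Its last step uses only that a vanishing Betti number bounds projective dimension via the minimal resolution. The paper instead ends at a vanishing Bass number, where getting to finite injective dimension (and hence freeness) takes an extra argument. The paper's approach mainly gains uniformity with the Ext-shifting of Lemma \ref{extomega} used elsewhere in that section.

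Your bookkeeping of free summands is fine: identifying $\Omega^m\Omega^{-m}$ with the identity stably, or peeling off the split cosyzygy sequences one at a time once $\Omega^{n}E$ is free, both work.
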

\begin{proof} The assertion is trivial for $n\ge 0$. Assume $n<0$. There are isomorphisms in  the derived category of $A$:  $\mathbf{R}\Hom_A(X\otimes _A^{\mathbf{L}}M,A)\cong \mathbf{R}\Hom_A(X,M^*)$ and $\mathbf{R}\Hom_A(\mathbf{R}\Hom_A(X,M^*),A)\cong X \otimes _A^{\mathbf{L}} \mathbf{R}\Hom_A(M^*,A)\cong X \otimes _A^{\mathbf{L}} M$ (see \cite[A.4.21 and A.4.24]{spectralsequence}). These gives rise to a  spectral sequence
    $$
        E_2^{p,q}=\Ext^p_A(\Tor_q^A(\Omega^{n}\Omega^dk,E),A)\implies H^{p+q}=\Ext^{p+q}_A(\Omega^{n}\Omega^dk,E^*).
$$
    Since $\Omega^{n}\Omega^dk$ is locally free on the punctured spectrum, so $\lambda(\Tor_q^A(\Omega^{-1}\Omega^dk,E))< \infty$ for $q> 0$. So by \cite[Corollary 3.5.11]{Bruns and Herzog}, we have $\Ext^p_A(\Tor_q^A(\Omega^{n}\Omega^dk,E),A)=0$ for $p\neq d$ and $q\neq 0$. So the above spectral sequence collapses at $2^{\operatorname{nd} }$-page. By hypothesis, $\Tor_1^A(\Omega^{n}\Omega^dk,E)=0$, thus  we obtain
    $\Ext^{d+1}_A(\Omega^{n}\Omega^dk,E^*)=0$. By repeatedly applying \ref{extomega}, we obtain (as $n<0$)
    $$0=\Ext^{d+1}_A(\Omega^{n}\Omega^dk,E^*)=\Ext^{d+1}_A(\Omega^{-n}\Omega^{n}\Omega^dk,\Omega^{-n}E^*)=\Ext^{d+1}_A(\Omega^dk,\Omega^{-n}E^*).$$
   So we have $\Ext^{2d+1}_A(k,\Omega^{-n}E^*)=0$, which implies that $\Omega^{-n}E^*$ is free, and thus $E^*$ is free and so is $E^{**}\cong E$.
\end{proof}
\begin{proposition} \label{omega^nomega^dk}
   Let $(A,\m,k)$ be a $d$-dimensional non-regular Gorenstein local ring, $E$  a finite $A$-module. Suppose for some integer $n$, one of the following two conditions holds:
   \begin{enumerate}  [\rm (i)]
       \item $\Tor_1^A(\Omega^{n}\Omega^dk,E)=0$,
       \item $\Ext^1_A(\Omega^{n}\Omega^dk,E)=0$,
   \end{enumerate}
   then we have $\pdim_A(E)<\infty$. \end{proposition}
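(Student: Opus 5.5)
The plan is to reduce from an arbitrary finite module $E$ to a maximal Cohen--Macaulay one by passing to a high syzygy, and then apply Lemma \ref{Takahashi}. Condition (ii) will be converted into a Tor-vanishing condition by duality.

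\textbf{Case (i).} Put $X=\Omega^{n}\Omega^dk$. This is a non-free MCM module, so it has cosyzygies, and $\Omega^{-j}X=\Omega^{n-j}\Omega^dk$ for every $j$. By dimension shifting along the sequences $0\to\Omega^{-j+1}X\to A^{\tau}\to\Omega^{-j}X\to0$,
$$\Tor_1^A(X,E)\cong\Tor_{1+d}^A(\Omega^{-d}X,E)\cong\Tor_1^A(\Omega^{-d}X,\Omega^dE).$$
Set $E'=\Omega^dE$, which is MCM. Then $\Tor_1^A(\Omega^{n-d}\Omega^dk,E')=0$, so Lemma \ref{Takahashi} (applied with $n-d$ in place of $n$) shows that $E'$ is free. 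Hence $\pdim_AE\le d<\infty$.

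The only subtlety is that the shift must be made through cosyzygies of $X$, so that the isomorphism holds in homological degree $1$. This is fine because each $\Omega^{-j}X$ is again of the form $\Omega^{m}\Omega^dk$.

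\textbf{Case (ii).} First reduce to $E$ being MCM. Take an MCM approximation $0\to Y\to E_0\to E\to0$ with $E_0$ MCM and $\pdim Y<\infty$ (Auslander--Buchweitz). Since $X$ is MCM, Lemma \ref{torext0} gives $\Ext^i_A(X,Y)=0$ for $i\ge1$. The long exact sequence then yields $\Ext^1_A(X,E_0)\cong\Ext^1_A(X,E)=0$. It therefore suffices to show that $E_0$ is free, since $\pdim Y<\infty$ then forces $\pdim E<\infty$.

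Now both $X$ and $E_0$ are MCM over a Gorenstein ring, so $\Ext^1_A(X,E_0)\cong\Ext^1_A(E_0^*,X^*)$ by dualizing. Since $X^*\cong\Omega^{-n}(\Omega^dk)^*$, it is a syzygy or cosyzygy of $(\Omega^dk)^*$, so this vanishing becomes an Ext-vanishing of the shape $\Ext^1(E_0^*,\Omega^{m}(\Omega^dk)^*)=0$.

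An alternative, which I expect to be cleaner, is to use stable Tate duality, or equivalently the spectral sequence from the proof of Lemma \ref{Takahashi}. For MCM modules, $\Ext^1_A(X,E_0)=0$ should be equivalent to the vanishing of the Tate homology $\widehat{\Tor}$ of $X$ against $E_0^*$ in a suitable degree. I would again shift that degree to $1$ using cosyzygies of $X$ and then invoke Lemma \ref{Takahashi}. Concretely, I would try applying the spectral sequence
$$\Ext^p(\Tor_q(\Omega^{m}\Omega^dk,E_0^*),A)\Rightarrow\Ext^{p+q}(\Omega^{m}\Omega^dk,E_0^{**}),$$
read in reverse. The Tor modules have finite length for $q>0$, so the sequence collapses to $\Ext^{d+q}(X',E_0)\cong\Ext^d(\Tor_q(X',E_0^*),A)$, and by Matlis duality that Ext vanishes exactly when the Tor does.

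Take $X'=\Omega^{d}X=\Omega^{n+d}\Omega^dk$ and $q=1$. Then $\Ext^{d+1}(X',E_0)\cong\Ext^1(X,E_0)=0$ by shifting. Hence $\Tor_1^A(\Omega^{n+d}\Omega^dk,E_0^*)=0$, and Lemma \ref{Takahashi} shows that $E_0^*$ is free, so $E_0\cong E_0^{**}$ is free.

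The main obstacle is justifying this spectral sequence step: the degree bookkeeping in the collapse, and the identification $\Ext^{d+1}(\Omega^dX,E_0)\cong\Ext^1(X,E_0)$, which does hold because $\Ext^{\ge1}(-,A)$ vanishes on MCM modules.
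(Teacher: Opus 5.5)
Your case (i) is correct, but case (ii) has a concrete error at exactly the step you flagged. Syzygies in the first argument raise the Ext-degree. From $0\to\Omega Z\to F\to Z\to0$ with $F$ free one gets $\Ext^j_A(\Omega Z,W)\cong\Ext^{j+1}_A(Z,W)$ for $j\ge1$. So with $X'=\Omega^dX$ you get $\Ext^{d+1}_A(\Omega^dX,E_0)\cong\Ext^{2d+1}_A(X,E_0)$, not $\Ext^1_A(X,E_0)$, and your hypothesis says nothing about $\Ext^{2d+1}_A(X,E_0)$. The vanishing of $\Ext^{\ge1}_A(-,A)$ on MCM modules does not rescue this: it only lets you shift in the second variable, as in Lemma \ref{extomega}, which gives $\Ext^1_A(X,E_0)\cong\Ext^1_A(\Omega^dX,\Omega^dE_0)$, not what the spectral sequence needs. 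The repair is to shift the other way, as you did in (i): take $X'=\Omega^{-d}X=\Omega^{n-d}\Omega^dk$. The sequences $0\to\Omega^{-j+1}X\to A^{\tau}\to\Omega^{-j}X\to0$ give $\Ext^1_A(X,E_0)\cong\Ext^{d+1}_A(\Omega^{-d}X,E_0)$. Your collapse is right: for $q\ge1$ only $E_2^{d,q}$ lives in total degree $d+q$, because $\Ext^{>d}_A(-,A)=0$ and the rows $q>0$ are concentrated in column $d$. Hence $\Tor_1^A(\Omega^{n-d}\Omega^dk,E_0^*)=0$, and Lemma \ref{Takahashi}, which allows any integer, makes $E_0^*$ free, hence $E_0\cong E_0^{**}$ free. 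With this one change your proof of (ii) is complete.

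Compared with the paper: for (i) the paper takes an Auslander--Buchweitz approximation $0\to Y\to X\to E\to0$, kills $\Tor_1$ against $Y$ with Lemma \ref{torext0}, and then invokes Lemma \ref{Takahashi}. Your shift $\Tor_1^A(X,E)\cong\Tor_1^A(\Omega^{-d}X,\Omega^dE)$ avoids the approximation altogether and even gives $\pdim_AE\le d$. For (ii) the paper is much shorter. Lemma \ref{extomega} moves the $|n|$ cosyzygies off the first argument onto syzygies of $E$, so $\Ext^1_A(\Omega^n\Omega^dk,E)\cong\Ext^1_A(\Omega^dk,\Omega^{-n}E)\cong\Ext^{d+1}_A(k,\Omega^{-n}E)$; the paper's display writes $k$ where $\Omega^dk$ is meant. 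Vanishing of a Bass number in degree $d+1>\depth\Omega^{-n}E$ then forces finite injective, hence finite projective, dimension over the Gorenstein ring $A$. Your route converts Ext to Tor through the spectral sequence, only for the proof of Lemma \ref{Takahashi} to convert Tor back to Ext through the same spectral sequence and run exactly this Bass-number argument. So it is valid once fixed, but circuitous. Its MCM approximation step is also unnecessary, since Lemma \ref{extomega} applies to arbitrary $E$.
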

\begin{proof} The assertions are trivial for $n\ge 0$. We assume $n<0$.\\
   (i) By a maximal Cohen-Macaulay approximation of $E$, we have an exact sequence
    $$0\longrightarrow Y\longrightarrow X\longrightarrow E\longrightarrow0,$$
    where $X$ is MCM and $\pdim_A(Y)<\infty$ (see \cite[Theorem A]{Auslander Buchweitz}). Tensoring this exact sequence with $\Omega^{n}\Omega^dk$, we obtain
    $$\Tor_1^A(\Omega^{n}\Omega^dk,Y)\longrightarrow \Tor_1^A(\Omega^{n}\Omega^dk,X)\longrightarrow \Tor_1^A(\Omega^{n}\Omega^dk,E).$$
     As $\pdim_A(Y)<\infty$, so by \ref{torext0} we get  $\Tor_1^A(\Omega^{n}\Omega^dk,Y)=0$ which implies $\Tor_1^A(\Omega^{n}\Omega^dk,X)=0$. Since $X$ is MCM, so by \ref{Takahashi} we have $X$ is free and hence $\pdim_A(E)<\infty$.

     (ii) We apply \ref{extomega} repeatedly to obtain (as $n<0$)
     $$0=\Ext^1_A(\Omega^{n}\Omega^dk,E)=\Ext^1_A(\Omega^{-n}\Omega^{n}\Omega^dk,\Omega^{-n}E)=\Ext^1_A(k,\Omega^{-n}E),$$
     so we have $\pdim_A(\Omega^{-n}E)<\infty$, and thus $\pdim_A(E)<\infty$.
\end{proof}
With the preceding results in place, we can now prove Theorem II.
\begin{theorem} \label{Omegadk=Omegan+dk} (with hypothesis as in \ref{setup}) Let $n$ be an integer, then we have
\begin{enumerate}  [\rm (i)]
\item $r_1^{I,N}(\Omega^dk)=r_1^{I,N}(\Omega^n\Omega^dk)$,
\item $s^1_{I,N}(\Omega^n\Omega^dk)=s^1_{I,N}(\Omega^dk).$
\end{enumerate}
\end{theorem}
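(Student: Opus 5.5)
Our plan is to argue by induction on $t=\dim N$, as in the proof of Theorem \ref{abound}, with the base case handled by Proposition \ref{omega^nomega^dk} and Lemma \ref{-1case}.

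For $n\ge 0$, Remark \ref{etpi=1} and Theorem \ref{abound}(i),(ii) give $r_1^{I,N}(\Omega^n\Omega^dk)=r_{n+1}^{I,N}(\Omega^dk)=r_1^{I,N}(\Omega^dk)$, and similarly for $s$. So only $n<0$ needs work. Put $M=\Omega^n\Omega^dk$. Then $\Omega^{-n}M\cong\Omega^dk$ up to free summands, and Remark \ref{etpi=1} gives $r_1^{I,N}(\Omega^dk)=r_{1-n}^{I,N}(M)$. Theorem \ref{abound}(iv) already yields $r_{1-n}^{I,N}(M)\le r_1^{I,N}(\Omega^dk)$ and $r_1^{I,N}(M)\le r_1^{I,N}(\Omega^dk)$. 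The remaining content is therefore the inequality $r_1^{I,N}(M)\ge r_1^{I,N}(\Omega^dk)$, together with its Ext analogue.

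\textbf{Base case $t=1$.} Here every degree is $0$ or $-1$. If $r_1^{I,N}(M)=-1$, then $\Tor_1^A(\Omega^n\Omega^dk,N/I^{m}N)=0$ for $m\gg0$. Proposition \ref{omega^nomega^dk}(i) then gives $\pdim_A(N/I^mN)<\infty$ for $m\gg 0$. By Lemma \ref{torext0}, $\Tor_1^A(\Omega^dk,N/I^mN)=0$ for $m\gg 0$, so $r_1^{I,N}(\Omega^dk)=-1$. The converse follows from Lemma \ref{-1case}. The Ext statement is proved the same way, using Proposition \ref{omega^nomega^dk}(ii) and Lemma \ref{torext0} for Ext. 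Hence both degrees agree when $t=1$.

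\textbf{Inductive step $t\ge2$.} The $-1$ cases reduce to the argument just given, which does not depend on $t$: vanishing for $m\gg0$ on either side forces $\pdim_A(N/I^mN)<\infty$ for $m\gg 0$, and hence vanishing on the other side. So assume both degrees are $\ge 0$. Choose $x\in I$ that is $N\oplus((M\oplus\Omega^dk)\otimes N)$-superficial and such that $xt$ is filter regular on $L_p^N(M\oplus\Omega^dk)$ for $p=0,1$ and on $E_q^N(M\oplus\Omega^dk)$ for $q=1,2$. The exact sequences \ref{F1tor} and \ref{F2ext} then give $r_1^{I,N/xN}(X)=r_1^{I,N}(X)-1$ and $s^1_{I,N/xN}(X)=s^1_{I,N}(X)-1$ for $X=M$ and $X=\Omega^dk$. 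Since $N/xN$ is perfect of dimension $t-1$, the induction hypothesis applied to $N/xN$ finishes the proof.

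\textbf{Main obstacle.} The key step is the base case, specifically the use of Proposition \ref{omega^nomega^dk} to pass from Tor/Ext vanishing against a cosyzygy of $\Omega^dk$ to finite projective dimension of $N/I^mN$. That proposition rests on Lemma \ref{Takahashi} and MCM approximation. A smaller point to check is that in \ref{F1tor} the index $i-1=0$ is covered by the superficiality of $x$ on $M\otimes N$, which is part of the hypotheses of \ref{-1argu}.
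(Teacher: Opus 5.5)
Your proposal is correct and follows essentially the same route as the paper: reduce to $n<0$ via Theorem \ref{abound}, show the $-1$ cases coincide using Proposition \ref{omega^nomega^dk} and Lemma \ref{-1case} (which settles $t=1$), then induct on $t$ by cutting down with an element $x\in I$ that is superficial and filter regular, so that both degrees drop by exactly one. Your opening remark that Theorem \ref{abound}(iv) already gives one inequality is harmless but unnecessary, since the induction proves the equality directly.
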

\begin{proof} (i) If $n\ge 0$, the assertions follow from \ref{abound}. Assume $n<0$. We claim that \begin{equation*}
       r_1^{I,N}(\Omega^dk)=-1 \ \operatorname{iff} \  r_1^{I,N} (\Omega^n\Omega^dk)=-1.
    \end{equation*}
    The only if part directly follows from \ref{-1case}. If $r_1^{I,N} (\Omega^n\Omega^dk)=-1$, then $\Tor_1^A(\Omega^n\Omega^dk,N/I^nN)=0$ for $n\gg 0$ and so by \ref{omega^nomega^dk} we have $\pdim_A(N/I^nN)<\infty$ for $n\gg 0$. Thus by \ref{torext0} we have $\Tor_1^A(\Omega^dk,N/I^nN)=0$ for $n\gg 0$, which implies $ r_1^{I,N}(\Omega^dk)=-1$. \\
    We apply induction on $t=\dim N$. The case t=1 follows from \ref{bound due to theodorscu} together with the above argument. Assume $t\ge 2$.  we can assume that none of $r_1^{I,N}(\Omega^dk)$ and $r_1^{I,N}(\Omega^n\Omega^dk)$ is $-1$. Let $x\in I$ be such that it is $N\oplus ((\Omega^dk \oplus\Omega^n\Omega^dk)\otimes N)$-superficial and $xt\in A[It]_1$ is $L_1^{I,N}(\Omega^dk \oplus \Omega^n \Omega^dK)$-filter regular. Using the exact sequence in \ref{F1tor}, we obtain $$r_1^{I,N/xN}(\Omega^dk)=r_1^{I,N}(\Omega^dk)-1,$$
    $$ r_1^{I,N/xN} (\Omega^n\Omega^dk)= r_1^{I,N} (\Omega^n\Omega^dk)-1.$$
    Since $N/xN$ is perfect of dimension $t-1$, hence by induction hypothesis the result follows.

    (ii) is proved similarly.
\end{proof}

\section{The case when \texorpdfstring{$I=\m$}{I=m}} \phantomsection \label{section 5}
We continue with the notation introduced in the previous sections.
In the preceding section, we proved that for every integer $i\ge 1$,
$$
r_i^{I,N}(M) \le \dim N - 1
\quad \text{and} \quad
s_i^{I,N}(M) \le \dim N - 1.
$$
In this section, we shall show that both of these inequalities become equalities when $I$ is the maximal ideal $ \mathfrak{m} $. Using \ref{basechange} we can assume the residue field $k$ is infinite.

\begin{setup}\normalfont \label{setup'}
   Let $(A,\m,k)$  be Gorenstein local ring, which is not regular and $N$ a perfect  module of dimension $t\geq 1$. For any non-free MCM $A$-module $M$ and a positive integer $i$, we set \begin{align*}
   r_i^N(M)=\deg(n\mapsto\lambda(\Tor_{i}^{A}(M,N/\m ^nN))) ,\\
   s^i_N(M)=\deg(n\mapsto \lambda(\Ext^{i}_{A}(M,N/\m ^nN))).
\end{align*}
\end{setup}

\begin{theorem} \label{tor,I=m} (With hypothesis as in \ref{setup'})
 Let $M$ be a non-free MCM $A$-module, then for any integer $i\ge 1$, we have $r_i^N(M)=t-1$.
\end{theorem}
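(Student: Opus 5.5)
The plan is to prove the lower bound $r_i^N(M)\ge t-1$; the upper bound $r_i^N(M)\le t-1$ is already in \ref{bound due to theodorscu} (or Theorem \ref{abound}(iv)). First I reduce to $i=1$ using Remark \ref{etpi=1}: $r_i^N(M)=r_1^N(\Omega^{i-1}M)$, and $\Omega^{i-1}M$ is again a non-free MCM module. So it suffices to show $r_1^N(M)=t-1$ for every non-free MCM $M$. I argue by induction on $t=\dim N$, and by \ref{basechange} I may assume $k$ is infinite.

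\emph{Base case $t=1$.} I must show $\Tor_1^A(M,N/\m^nN)\neq 0$ for $n\gg 0$. Suppose instead that it vanishes for all $n\ge n_0$. Tensor the exact sequence $0\to \m^nN/\m^{n+1}N\to N/\m^{n+1}N\to N/\m^nN\to 0$ with $M$. Since $\Tor_1^A(M,N/\m^nN)=0$, the map $M\otimes \m^nN/\m^{n+1}N\to M\otimes N/\m^{n+1}N$ is injective. Hence
$$\lambda(M\otimes N/\m^{n+1}N)-\lambda(M\otimes N/\m^{n}N)=\mu(M)\,\lambda(\m^nN/\m^{n+1}N).$$
For $n\gg0$ the right side equals $\mu(M)e(N)$, since $\dim N=1$. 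The left side equals $e(M\otimes N)$, because $M\otimes N$ is Cohen--Macaulay of dimension $1$ by Proposition \ref{homtensorCM} and $\m^n(M\otimes N)$ is the image of $M\otimes\m^nN$. Thus $e(M\otimes N)=\mu(M)e(N)$.

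On the other hand, tensor $0\to\Omega M\to A^{\mu(M)}\to M\to 0$ with $N$. Since $\Tor_1^A(M,N)=0$ by \ref{torext0}, this gives an exact sequence of one-dimensional modules $0\to \Omega M\otimes N\to N^{\mu(M)}\to M\otimes N\to 0$. Additivity of multiplicity then gives $e(\Omega M\otimes N)+e(M\otimes N)=\mu(M)e(N)$, so $e(\Omega M\otimes N)=0$. But $\Omega M\neq 0$ because $M$ is non-free, so $\Omega M\otimes N\neq 0$ has dimension $1$ by Proposition \ref{homtensorCM}, and its multiplicity is positive. This contradiction gives $r_1^N(M)=0=t-1$.

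\emph{Inductive step $t\ge 2$.} Choose $x\in\m$ that is $N\oplus(M\otimes N)$-superficial and such that $xt$ is $L_p^N(M)$-filter regular for $p=0,1$, as in \ref{-1argu}. Then $\overline N=N/xN$ is perfect of dimension $t-1$. The short exact sequence \ref{F1tor} gives
$$\lambda(\Tor_1^A(M,\overline N/\m^{n+1}\overline N))=\lambda(\Tor_1^A(M,N/\m^{n+1}N))-\lambda(\Tor_1^A(M,N/\m^nN))\quad\text{for } n\gg0.$$
By induction the left side is a polynomial of degree $t-2\ge 0$, so it is not eventually zero. Hence the function $n\mapsto\lambda(\Tor_1^A(M,N/\m^nN))$ is not eventually zero, and its degree is $r_1^{N/xN}(M)+1=t-1$.

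I expect the main obstacle to be the base case $t=1$. There Theodorescu's bound gives no information and the nonvanishing has to be extracted from something rigid. The multiplicity comparison between $M\otimes N$ and $\Omega M\otimes N$ is the key point. It relies on $\Tor_1^A(M,N)=0$ and on the fact, special to $I=\m$, that $\m^nN/\m^{n+1}N$ is a $k$-vector space of dimension $e(N)$, so that tensoring with $M$ multiplies its length by exactly $\mu(M)$. This is also where the argument should fail for a general ideal $I$, consistent with the counterexample mentioned for arbitrary $\m$-primary ideals.
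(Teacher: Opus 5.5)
Your proof is correct and follows the paper's own argument. Both reduce to $i=1$ and induct on $t$ using a superficial/filter-regular element together with the exact sequence \ref{F1tor}. In the base case both compare $\mu(M)e(N)$, $e(M\otimes N)$ and $e(\Omega M\otimes N)$ via the same two tensored exact sequences.

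The only cosmetic difference is in that base case. You argue by contradiction: eventual vanishing of $\Tor_1$ would force $e(\Omega M\otimes N)=0$. The paper instead keeps the $\Tor_1$ term and obtains the direct lower bound $c\ge \mu(M)e(N)-e(M\otimes N)=e(\Omega M\otimes N)>0$.
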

\begin{proof}
   In view of \ref{etpi=1}, it suffices  to prove the statement for $i=1$. For this we apply induction on $t=\dim N$. Let $t=1$. By \ref{abound} we have $\lambda( \Tor_1^A(M,N/\m^nN))=c$ for $n\gg 0$, where $c\ge 0$ is a constant. We need to prove that $c>0$.

   We tensor the exact sequence
$0\longrightarrow \m^nN/\m^{n+1}N\longrightarrow N/\m^{n+1}N\longrightarrow N/\m^nN\longrightarrow 0$
with $M$ to get the exact sequence
$$
 \Tor_1^A(M,N/\m^nN)\longrightarrow \frac{\m^nN}{\m^{n+1}N}\otimes M\longrightarrow \frac{M\otimes N}{\m^{n+1}(M\otimes N)}\longrightarrow \frac{M\otimes N}{\m^{n}(M\otimes N)}\longrightarrow 0.
$$
 Since $\dim N=1$, so $ \m^nN/\m^{n+1}N \cong k^{e(N)}$. Therefore we have $$\frac{\m^nN}{\m^{n+1}N}\otimes M\cong (\frac{M}{\m M})^{e(N)}.$$
Computing lengths, for $n\gg 0$ we have $$c\ge \mu (M)e(N)-\lambda (\frac{\m^n(M\otimes N)}{\m ^{n+1}(M\otimes N)}). $$ Since  $\dim M\otimes N=1$ (see \ref{homtensorCM}), so for $n\gg 0$ $$\lambda (\frac{\m^n(M\otimes N)}{\m ^{n+1}(M\otimes N)})=e(M\otimes N),$$
therefore we obtain
\begin{equation} \label{eq5}
    c \ge \mu (M)e(N)-e(M\otimes N).
\end{equation}
Next we tensor the exact sequence $0 \longrightarrow \Omega M\longrightarrow A^{\mu (M)} \longrightarrow M\longrightarrow 0$
with $N$ to get the exact sequence
$$
    0=\Tor_1^A(M,N) \longrightarrow \Omega M \otimes N \longrightarrow   N^{\mu(M)} \longrightarrow M\otimes N\longrightarrow 0.
$$
By \ref{homtensorCM} all the above nonzero modules have dimension $1$, therefore we get \begin{equation} \label{eq7}
    \mu (M)e(N)-e(M\otimes N)=e(\Omega M\otimes N)
\end{equation}
Now $\dim \Omega M \otimes N=1$ (see \ref{homtensorCM}), therefore $e(\Omega M\otimes N)>0$, and finally combining this with (\ref{eq5}) and (\ref{eq7}), we obtain $$c\ge e(\Omega M\otimes N)> 0 .$$ So for $t=1$ the result holds.

Assume $t\ge 2$. Let $x\in \m $ be such that it is  $N\oplus (M\otimes N)$-superficial and $xt\in A[\m t]_1$ is $L_1^N(M)$-filter regular. Using the exact sequence in \ref{F1tor} we obtain $r_1^{N/xN}(M)=r_1^N(M)-1$. Since $N/xN $ is perfect of dimension $t-1$, so by induction hypothesis the result follows.
\end{proof}

We now proceed to prove an analogous result for Ext.
\begin{theorem} \label{ext,I=m} (With hypothesis as in \ref{setup'})
    Let $M$ be a non-free MCM $A$-module, then for any integer $i\ge 1$ we have $s^i_N(M)=t-1$.
\end{theorem}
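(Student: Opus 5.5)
Since $\Ext^i_A(M,-)=\Ext^1_A(\Omega^{i-1}M,-)$ and $\Omega^{i-1}M$ is again a non-free MCM module, Remark \ref{etpi=1} lets us take $i=1$. We then induct on $t=\dim N$, following the proof of Theorem \ref{tor,I=m}.

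For $t=1$, Theorem \ref{abound} shows that $\lambda(\Ext^1_A(M,N/\m^nN))=c$ for $n\gg 0$, with $c\ge 0$ constant; we must show $c>0$. Apply $\Hom_A(M,-)$ to
$$0\longrightarrow \m^nN/\m^{n+1}N\longrightarrow N/\m^{n+1}N\longrightarrow N/\m^nN\longrightarrow 0.$$
This gives an exact sequence
$$\Hom_A(M,\m^nN/\m^{n+1}N)\longrightarrow \Hom_A(M,N/\m^{n+1}N)\longrightarrow \Hom_A(M,N/\m^nN)\longrightarrow \Ext^1_A(M,\m^nN/\m^{n+1}N)\longrightarrow \Ext^1_A(M,N/\m^{n+1}N).$$
For $n\gg 0$ we have $\m^nN/\m^{n+1}N\cong k^{e(N)}$, so the first term has length $\tau e(N)$, where $\tau=\operatorname{type}(M)=\mu(M^*)$. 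The fourth term has length $\beta_1(M^*)$-type data, namely $e(N)\dim_k\Ext^1_A(M,k)=e(N)\mu(\Omega M)$.

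The cleaner route is to mimic the tensor argument with the cosyzygy sequence
$$0\longrightarrow M\longrightarrow A^{\tau}\longrightarrow \Omega^{-1}M\longrightarrow 0.$$
Apply $\Hom_A(-,N/\m^{n+1}N)$. Since $\Ext^1_A(A^\tau,-)=0$, we get $\Ext^1_A(\Omega^{-1}M,X)=\operatorname{coker}(X^\tau\to \Hom_A(M,X))$. Using $\Ext^1$ with the roles shifted (replace $M$ by $\Omega^{-1}M$, which is harmless since $\Omega^{-1}M$ is also non-free MCM and every non-free MCM is some $\Omega^{-1}M'$), it suffices to show that
$$\lambda\bigl(\operatorname{coker}((N/\m^{n+1}N)^\tau\to \Hom_A(M,N)/\m^{n+1}\Hom_A(M,N))\bigr)>0.$$
Here we use $\Hom_A(M,N/\m^{n+1}N)$, which we compare to $\Hom_A(M,N)\otimes A/\m^{n+1}$. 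Concretely, apply $\Hom_A(M,-)$ to $0\to \m^{n+1}N\to N\to N/\m^{n+1}N\to 0$. By Lemma \ref{torext0}, $\Ext^1_A(M,N)=0$, so
$$\lambda(\Ext^1_A(M,N/\m^{n+1}N))=\lambda(\Ext^2_A(M,\m^{n+1}N))-(\text{image term}).$$
This is getting messy, so the plan is instead to use the length count directly. From the long exact Hom sequence of the cosyzygy sequence applied to $X=N/\m^{n+1}N$,
$$c=\lambda(\Hom_A(M,X))-\tau\lambda(X)+\lambda(\Hom_A(\Omega^{-1}M,X)).$$
Each $\lambda(\Hom_A(Y,N/\m^{n+1}N))$ with $Y$ MCM is estimated via $\Hom_A(Y,N)/\m^{n+1}\Hom_A(Y,N)\hookrightarrow \Hom_A(Y,N/\m^{n+1}N)$. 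This injection holds by Proposition \ref{homtensorCM} and the exactness of $0\to\Hom(Y,\m^{n+1}N)\to\Hom(Y,N)$, and it gives at least $e(\Hom(Y,N))(n+1)+\text{const}$. Meanwhile $\tau\lambda(X)=\tau e(N)(n+1)+\text{const}$.

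Dualizing the cosyzygy sequence by $\Hom_A(-,N)$ (exact by Lemma \ref{torext0}) yields $0\to\Hom(\Omega^{-1}M,N)\to N^\tau\to\Hom(M,N)\to 0$. Hence $e(\Hom(M,N))+e(\Hom(\Omega^{-1}M,N))=\tau e(N)$, so the linear terms cancel and boundedness alone does not settle the question. The \textbf{main obstacle} is therefore the $t=1$ positivity, and the plan is to get it by an Ext analogue of (\ref{eq5}). Using $\Hom_A(M,k^{e(N)})$, we obtain
$$c\ge \lambda(\Hom_A(M,N/\m^{n}N))-\lambda(\Hom_A(M,N/\m^{n+1}N))+\tau e(N)\ \ (\text{or the reverse});$$
then we show that $\lambda(\Hom_A(M,N/\m^{n+1}N))-\lambda(\Hom_A(M,N/\m^nN))=e(\Hom_A(M,N))$ for $n\gg0$. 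This equality holds because $\Hom(M,N/\m^nN)$ is, up to the bounded error $c$, equal to $\Hom(M,N)/\m^n\Hom(M,N)$. We conclude
$$c\ge \tau e(N)-e(\Hom_A(M,N))=e(\Hom_A(\Omega^{-1}M,N))>0,$$
with positivity by Proposition \ref{homtensorCM}, since $\Hom_A(\Omega^{-1}M,N)\neq0$ has dimension 1.

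For $t\ge2$, choose $x\in\m$ that is $N\oplus\Hom$-superficial and such that $xt$ is $E^1_N(M)$, $E^2_N(M)$-filter regular. The sequence \ref{F2ext} then gives $s^1_{N/xN}(M)=s^1_N(M)-1$, and the induction hypothesis applied to the perfect module $N/xN$ of dimension $t-1$ finishes the proof.
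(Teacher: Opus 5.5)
The reduction to $i=1$ and the inductive step for $t\ge 2$ match the paper. The base case $t=1$, which you rightly call the main obstacle, is not established: the inequality $c\ge \tau e(N)-e(\Hom_A(M,N))$ does not follow from the Hom--Ext sequence you invoke.

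\begin{itemize}
\item The first term of that sequence is $\Hom_A(M,k)^{e(N)}$, the $k$-dual of $(M/\m M)^{e(N)}$. Its length is $\mu(M)e(N)$, not $\tau e(N)$; the type of $M$ plays no role there.
\item Between $\Hom_A(M,N/\m^nN)$ and $\Ext^1_A(M,N/\m^{n+1}N)$ sits $\Ext^1_A(M,k)^{e(N)}$, of length $\beta_1(M)e(N)$. You dropped this term.
\item You leave the sign of the Hom-difference open (``or the reverse'').
\end{itemize}

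Write $h_n=\lambda(\Hom_A(M,N/\m^nN))$. A correct length count gives only
\[
c\;\ge\; h_{n+1}-h_n-e(N)\bigl(\mu(M)-\beta_1(M)\bigr)=e(\Hom_A(M,N))-e(N)\bigl(\mu(M)-\beta_1(M)\bigr).
\]
The cosyzygy identity $\tau e(N)=e(\Hom_A(M,N))+e(\Hom_A(\Omega^{-1}M,N))$ says nothing about this right-hand side, so your step ``$=e(\Hom_A(\Omega^{-1}M,N))>0$'' is unsupported. There is also a bookkeeping slip. After replacing $M$ by $\Omega^{-1}M$, your $c$ measures $\Ext^1_A(\Omega^{-1}M,-)$. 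Your final estimate, however, comes from the sequence for $\Ext^1_A(M,-)$.

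The missing step is to evaluate $e(N)(\mu(M)-\beta_1(M))$ using two syzygy sequences instead of the cosyzygy one. Apply $\Hom_A(-,N)$ to $0\to\Omega M\to A^{\mu(M)}\to M\to 0$ and to $0\to\Omega^2M\to A^{\beta_1(M)}\to\Omega M\to 0$; both stay exact by Lemma \ref{torext0}. Taking multiplicities gives $e(\Hom_A(M,N))-e(N)(\mu(M)-\beta_1(M))=e(\Hom_A(\Omega^2M,N))$, which is positive by Proposition \ref{homtensorCM}. Hence $c\ge e(\Hom_A(\Omega^2M,N))>0$.

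This is exactly the paper's computation, phrased there as a contradiction. Assuming $c=0$ makes the four-term sequence exact. The same vanishing lets the paper prove $h_{n+1}-h_n=e(\Hom_A(M,N))$ quickly with a superficial element.

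In your direct version that equality must be proved without the vanishing hypothesis. Your ``bounded error'' idea can do this, but the injection $\Hom_A(Y,N)/\m^{n+1}\Hom_A(Y,N)\hookrightarrow\Hom_A(Y,N/\m^{n+1}N)$ that you assert is false in general. Its kernel is $\Hom_A(Y,\m^{n+1}N)/\m^{n+1}\Hom_A(Y,N)$. What is true is that this kernel has bounded length by Artin--Rees. The cokernel $\Ext^1_A(Y,\m^{n+1}N)\cong\Ext^1_A(\Omega^{-1}Y,N/\m^{n+1}N)$ also has bounded length. Both bounds together suffice to identify the leading coefficient as $e(\Hom_A(Y,N))$.
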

\begin{proof} In view of \ref{etpi=1} it suffices  to prove the statement for $i=1$. For this we apply induction on $t=\dim N$. Let $t=1$. We need to show $s^1_N(M)=0$. Suppose $s^1_N(M)<0$, then
\begin{equation} \label{eq8}
    \Ext^1_A(M,N/\m ^n N)=0 \   \operatorname{for} n\gg 0.
\end{equation}
Consider the exact sequence
$$0\longrightarrow \m^nN/\m^{n+1}N\cong k^{e^(N)}\longrightarrow N/\m^{n+1}N\longrightarrow N/\m^nN\longrightarrow 0,$$
 applying $\Hom_A(M,-)$, we obtain for $n\gg 0$

 \begin{equation*}
 \begin{split}
     0\longrightarrow \Hom _A(&M,k)^{e(N)} \longrightarrow \Hom_A(M,N/\m^{n+1}N)\longrightarrow \Hom_A(M,N/\m^n N)
    \\ & \longrightarrow
     \Ext^1_A(M,k)^{e(N)}\longrightarrow  \Ext^1_A(M,N/\m^n N)=0
      \end{split}
 \end{equation*}
Note that $\beta_i(M)=\dim_k\Ext^i_A(M,k)=\lambda(\Ext^i_A(M,k))$. Therefore computing lengths, for $n\gg 0$ we have
\begin{equation} \label{eq10}
    \lambda(\Hom_A(M,N/\m^{n+1}N))-\lambda(\Hom_A(M,N/\m^nN))=e(N)(\mu (M)-\beta_1(M)).
\end{equation}
Since $\dim \Hom_A(M,N)=1=\dim N$(see \ref{homtensorCM}), so by \ref{Theodorescu} we have ($n \mapsto \lambda(\Hom_A(M,N/\m^nN))$) is a polynomial function of degree $1$, say for $n\gg 0$
\begin{equation} \label{eq11}
    \lambda(\Hom_A(M,N/\m^nN))=c_0n+c_1
\end{equation}
with $c_0>0$. We claim  that $c_0=e(\Hom_A(M,N))$. Assume this for now. Then from (\ref{eq10}) we have
\begin{equation} \label{eq12}
   e(\Hom_A(M,N))= e(N)(\mu (M)-\beta_1(M)).
\end{equation}
 Now we apply $\Hom_A(-,N)$ to the exact sequences
 $$
     0\longrightarrow \Omega M\longrightarrow A^{\mu (M)}\longrightarrow M\longrightarrow 0 ,$$
$$ 0\longrightarrow \Omega ^2 M\longrightarrow A^{\beta_1 (M)}\longrightarrow \Omega M\longrightarrow 0,
 $$
and use \ref{torext0} to obtain the following exact sequences
$$0\longrightarrow \Hom_A(M,N)\longrightarrow N^{\mu (M)}\longrightarrow \Hom_A(\Omega M,N)\longrightarrow 0,$$
$$0\longrightarrow \Hom_A(\Omega M,N)\longrightarrow N^{\beta_1 (M)}\longrightarrow \Hom_A(\Omega^2 M,N)\longrightarrow 0.$$
Since all nonzero modules in the above two exact sequences have dimension $1$, therefore we obtain
$$\mu (M) e(N)=e(\Hom_A(M,N))+e(\Hom_A(\Omega M,N))$$
$$\beta_1(M) e(N)=e(\Hom_A(\Omega M,N))+e(\Hom_A(\Omega^2 M,N))$$
Subtracting we get $$e(N)(\mu (M)-\beta_1(M))=e(\Hom_A(M,N))-e(\Hom_A(\Omega^2 M,N)),$$
and from (\ref{eq12}), we have $e(\Hom_A(\Omega^2 M,N))=0$, this is not possible, as $\dim \Hom_A(\Omega^2 M,N)=1$. Hence $s^1_N(M)=0$.

Now we prove our claim that $c_0=e(\Hom_A(M,N))$. Let $x\in \m$ such that $x$ is $\Hom_A(M,N)\oplus N$-superficial. So we have $\m ^{n+1}N:_Nx=\m^nN \operatorname{for} n\gg 0$
and
\begin{equation} \label{eq13}
   e(\Hom_A(M,N))=e(\frac{\Hom_A(M,N)}{x\Hom_A(M,N)})=\lambda(\frac{\Hom_A(M,N)}{x\Hom_A(M,N)}).
\end{equation}
 Since $\dim N=1$, for $n\gg 0$ we have an exact sequence $$0\longrightarrow N/\m^nN \xlongrightarrow{\phi_n} N/\m ^{n+1}N\longrightarrow N/xN\longrightarrow 0$$ where $\phi_n(\zeta+\m^nN)=x\zeta +\m^{n+1}N$ for $\zeta \in N$. By applying the functor $\Hom_A(M,-)$ to the preceding short exact sequence and making use of (\ref{eq8}), we deduce that the following sequence is exact for all $n\gg 0$  $$0\longrightarrow \Hom_A(M,N/\m^nN)\longrightarrow \Hom_A(M,N/\m ^{n+1}N)\longrightarrow \Hom_A(M,N/xN)\longrightarrow 0.$$ Computing lengths and using (\ref{eq11}) we obtain
\begin{equation} \label{eq14}
    c_0=\lambda(\Hom_A(M,N/xN)).
\end{equation}
By assumption, $x$ is a regular element on both $N$ and $\Hom_A(M,N)$, therefore
\begin{equation} \label{eq15}
    \Hom_A(M,\frac{N}{xN})\cong \frac{\Hom_A(M,N)}{x\Hom_A(M,N)}.
\end{equation} Finally, putting together (\ref{eq13}), (\ref{eq14}) and (\ref{eq15}), we conclude the proof of the claim.

So we have proved the assertion for the case $t=1$. Assume $t\ge 2$. Let  $y\in \m$ be such that it is $N$-superficial and $yt\in A[\m t]_1$ is $E^1_N(M)\oplus E^2_N(M)$-filter regular. Using the exact sequence in \ref{F2ext} we obtain $s^1_{N/yN}=s^1_{N}(M)-1$. Since $N/yN$ is perfect of dimension $t-1$, so by induction hypothesis the result follows.
\end{proof}
\begin{remark} \label{counterexample} \normalfont
    The conclusion of the above the theorem fails for arbitrary $\m$-primary ideals. Assume the residue field $k$ is infinite. Let $M$ be a non-free MCM $A$-module. Let $\underline{x}=x_1,\ldots,x_d$ be a regular sequence for $A,M$ and $\Omega M$. Set $J=\underline{x}A$. Then, we have $r_1^{J,A}(M)=-1$ (for detailed discussion see \cite[Remark 20] {Hilbertcoefficient}).
\end{remark}

\section{The Stable category \texorpdfstring{$\CMS(A)$}{CMS(A)}} \phantomsection \label{section 6}
Throughout this section we work with the following setup, unless stated otherwise.
\begin{point}\normalfont \textbf{Setup:} \label{setup8} Let $(A,\m,k)$ be a non-regular Gorenstein ring of dimension $d\ge 1$ and $N$ a perfect module of dimension $t\ge 1$ and $I$ is an ideal of definition of $N$.
\end{point}
\begin{point}\normalfont
   For a MCM $A$-module $M$, we define
$$r^{I,N}_\infty(M):=\operatorname{sup}\{r^{I,N}_1(\Omega^sM)|s\in \Z\}.$$
By \ref{Omegadk=Omegan+dk}, we have $r^{I,N}_\infty(\Omega^dk)=r_1^{I,N}(\Omega^dk).$
\end{point}

We now proceed to explore some properties. To that end, we first state a lemma that will play a crucial role (see \cite[Lemma 4.1.]{ThickCMA}).
\begin{lemma} \label{takahashilemma}
    Let $0\longrightarrow U \longrightarrow V \longrightarrow W \longrightarrow 0$ be an exact sequence of $A$-modules. Then there exists an exact sequence $0\longrightarrow \Omega W\longrightarrow U\oplus F \longrightarrow V\longrightarrow 0$, where $F$ is free.
\end{lemma}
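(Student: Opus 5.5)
We want an exact sequence $0\to \Omega W\to U\oplus F\to V\to 0$ with $F$ free, starting from $0\to U\xrightarrow{f} V\xrightarrow{g} W\to 0$. The plan is to build it as a pullback against a free cover of $W$.

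First choose a free module $F$ with a surjection $\pi\colon F\to W$; we may take it minimal, so that $\Omega W=\ker\pi$. Since $F$ is free, $\pi$ lifts through the surjection $g$: there is $h\colon F\to V$ with $g\circ h=\pi$. Now define
\[
\phi\colon U\oplus F\longrightarrow V,\qquad \phi(u,a)=f(u)+h(a).
\]

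Next we check that $\phi$ is surjective. Take $v\in V$. Pick $a\in F$ with $\pi(a)=g(v)$. Then $g(v-h(a))=0$, so $v-h(a)=f(u)$ for some $u\in U$, and hence $v=\phi(u,a)$.

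Then we identify $\ker\phi$ with $\Omega W$. Suppose $(u,a)\in\ker\phi$. Applying $g$ to $f(u)+h(a)=0$ gives $\pi(a)=0$, so $a\in\Omega W$. Conversely, let $a\in\Omega W$. Then $g(h(a))=0$, so $h(a)=f(u')$ for a \emph{unique} $u'\in U$, because $f$ is injective. Hence the map
\[
\Omega W\longrightarrow \ker\phi,\qquad a\longmapsto(-u',a),
\]
is well defined. It is injective, since its second coordinate is $a$. It is surjective, since for $(u,a)\in\ker\phi$ injectivity of $f$ forces $u=-u'$. This gives the exact sequence $0\to\Omega W\to U\oplus F\to V\to 0$.

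There is no real obstacle here; it is essentially the horseshoe/pullback construction. The only point needing care is that injectivity of $f$ is exactly what makes the identification $\ker\phi\cong\Omega W$ well defined. If $F$ is taken minimal, the kernel is the syzygy $\Omega W$ on the nose; for a non-minimal $F$ it agrees with $\Omega W$ up to a free summand, which can be absorbed into $F$.
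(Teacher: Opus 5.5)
Your proposal is correct and is the same argument as the paper's, which simply says to pull back along a minimal free cover $0\to\Omega W\to A^{\mu(W)}\to W\to 0$; your map $\phi(u,a)=f(u)+h(a)$ is exactly the composite of the split isomorphism $U\oplus F\cong V\times_W F$ with the projection to $V$, written out explicitly. Your closing remark about absorbing a free summand when $F$ is not minimal is unnecessary, and not quite immediate as stated, but since you already take $F$ minimal it does not affect the proof.
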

This lemma is easily proved by taking an exact sequence $0\longrightarrow \Omega W \longrightarrow  A^{\mu(W)} \longrightarrow W \longrightarrow 0$ and making a pullback diagram.
\begin{point}\normalfont \label{thickproperties} \textbf{Some properties:} (With hypothesis as in \ref{setup8}) For any MCM $A$-module $M$, we have
\begin{enumerate} [\rm (I)]
    \item $r^{I,N}_\infty(M)=r^{I,N}_\infty(\Omega^iM)$ for any integer $i$.
    \item  If $W|M$, that is, $W$ is a direct summand of $M$, then $r^{I,N}_\infty(W)\le r^{I,N}_\infty(M)$.
    \item If $0\longrightarrow U \longrightarrow V \longrightarrow W \longrightarrow 0$ is an exact sequence of MCM $A$-modules, then we have
\begin{enumerate} [\rm (i)]
    \item $r^{I,N}_\infty(V)\le \operatorname{max}\{r^{I,N}_\infty(U),r^{I,N}_\infty(W)\},$
    \item $r^{I,N}_\infty(U)\le \operatorname{max}\{r^{I,N}_\infty(V),r^{I,N}_\infty(W)\}$,
    \item $r^{I,N}_\infty(W)\le \operatorname{max}\{r^{I,N}_\infty(U),r^{I,N}_\infty(V)\}$

\end{enumerate}

\end{enumerate}
\begin{proof}
    \textbf{(I)} Follows directly from definition.\newline
    \textbf{(II)} There exists an $A$-module $V$ such that $M\cong W\oplus V$. Applying Schanuel's lemma to the following two exact sequences
    $0 \longrightarrow \Omega W \oplus \Omega V \longrightarrow A^{\mu(W)+\mu(V)}\longrightarrow M\longrightarrow 0 $
    and $0\longrightarrow \Omega M \longrightarrow A^{\mu(M)} \longrightarrow M \longrightarrow 0 $,
    we obtain $ \Omega W \oplus \Omega V\oplus A^{\mu(M)} \cong \Omega M \oplus A^{\mu(W)+\mu(V)} $. In particular, there exists a free module $F_1$ such that $\Omega W | (\Omega M\oplus F_1)$. By iterating this process repeatedly, for each integer $s\ge 0$, we obtain a free module $F_s$ such that $\Omega^s W| (\Omega^sM\oplus F_s)$.

    For $s<0$, note that $W^*|M^*$, where $(-)^*=\Hom_A(-,A)$. So there exists a free module $G$ such that $\Omega (W^*)|(\Omega(M^*)\oplus G)$, which implies $(\Omega (W^*))^*|((\Omega(M^*))^*\oplus G^*)$. Recall for a MCM $A$- module $M$, the cosyzygy of $M$ is defined to be as $\Omega^{-1}(M)=(\Omega(M^*))^*$ (see \ref{cosyzygy}). So there exists a free module $F_{-1}=G^*$ such that $\Omega ^{-1}W| ((\Omega ^{-1}M\oplus F_{-1})$. By proceeding inductively for each integer $s$, we obtain a free module $F_s$ such that $\Omega^sW|(\Omega^sM \oplus F_s)$, which implies $r_1^{I,N}(\Omega^sW)\le r_1^{I,N}(\Omega^sM)\le r_\infty^{I,N}(M)$ for each $s\in \Z$. Hence we have $r_\infty^{I,N}(W)\le r_\infty^{I,N}(M)$. \newline
    \textbf{(III)} For each integer $s\ge 0$, we have an exact sequence $0\longrightarrow \Omega^sU \longrightarrow \Omega^sV \oplus F_s \longrightarrow \Omega^sW \longrightarrow 0$, where $F_s$ is free. For $s<0$, we apply $(-)^*$ to obtain the following exact sequence
   $ 0\longrightarrow W^*\longrightarrow V^* \longrightarrow U^* \longrightarrow \Ext^1_A(W,A)=0$.
So we have an exact sequence $0\longrightarrow \Omega(W^*) \longrightarrow \Omega(V^*)\oplus G \longrightarrow \Omega(U^*)\longrightarrow 0$, where $G$ is free. Again we apply $(-)^*$ to obtain the exact sequence $0\longrightarrow (\Omega(U^*))^* \longrightarrow (\Omega(V^*))^*\oplus G^* \longrightarrow (\Omega(W^*))^*\longrightarrow \Ext^1_A(\Omega (U^*),A)= 0$, that is, we have an exact sequence $0\longrightarrow \Omega^{-1}U \longrightarrow\Omega^{-1}V\oplus F_{-1}\longrightarrow\Omega^{-1}W\longrightarrow 0$, where $F_{-1}=G^*$ is free. By proceeding inductively for each integer $s$, we obtain an exact sequence $0\longrightarrow \Omega^sU\longrightarrow \Omega^sV\oplus F_s\longrightarrow \Omega^sW\longrightarrow 0$, where $F_s$ is free. Tensoring with $N/I^nN$, for each integer $s$, we obtain an exact sequence $\Tor_1^A(\Omega^sU,N/I^nN)\longrightarrow \Tor_1^A(\Omega^sV,N/I^nN) \longrightarrow \Tor_1^A(\Omega^sW,N/I^nN)$, which yields $r_1^{I,N}(\Omega^sV)\le \operatorname{max}\{r_1^{I,N}(\Omega^sU),r_1^{I,N}(\Omega^sW)$ for each $s\in \Z$. Thus we have (i).

By \ref{takahashilemma}, there exists an exact sequence $0\longrightarrow \Omega W\longrightarrow U\oplus H \longrightarrow V\longrightarrow 0$, where $H$ is free. Thus (ii) follows from (i) and (iii) follows from (ii).
\end{proof}
\end{point}
We now apply the above properties to a distinguished triangle in $\underline{\operatorname{CM}}(A)$ to obtain similar inequalities.

\begin{lemma} \label{triangleinfinity} (With hypothesis as in \ref{setup8})
    Let $X\longrightarrow Y\longrightarrow Z\longrightarrow \Omega^{-1}X$ be a distinguished triangle in $\underline{\operatorname{CM}}(A)$, then we have
    \begin{enumerate} [\rm (i)]
        \item  $r_\infty^{I,N}(X)\le \operatorname{max}\{r_\infty^{I,N}(Y),r_\infty^{I,N}(Z)\}$,
        \item $r_\infty^{I,N}(Y)\le \operatorname{max}\{r_\infty^{I,N}(X),r_\infty^{I,N}(Z)\}$,
        \item $r_\infty^{I,N}(Z)\le \operatorname{max}\{r_\infty^{I,N}(X),r_\infty^{I,N}(Y)\}$.
    \end{enumerate}
\end{lemma}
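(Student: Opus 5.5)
The plan is to reduce the distinguished triangle to a short exact sequence of MCM modules and then apply \ref{thickproperties}. By the standard description of triangles in $\CMS(A)$ (see \cite[4.7]{CM(A)}), any distinguished triangle $X\to Y\to Z\to \Omega^{-1}X$ is isomorphic in $\CMS(A)$ to one of the form $X\to Y'\to Z'\to\Omega^{-1}X$ arising from a short exact sequence $0\to X\to Y'\to Z'\to 0$ of MCM $A$-modules. Concretely, we take the mapping-cone construction: represent $X\to Y$ by an $A$-linear map $f$, embed $X\hookrightarrow A^{\tau}$ as in \ref{cosyzygy}, and set $Y'=Y\oplus A^{\tau}$ with the injective map $(f,\iota)$. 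Its cokernel $Z'$ is MCM, since it is an extension of $\Omega^{-1}X$ by $Y$ (the depth lemma gives this), and it is isomorphic to $Z$ in $\CMS(A)$. Then $Y'\cong Y$ and $Z'\cong Z$ in $\CMS(A)$.

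Next we check that $r_\infty^{I,N}$ is an invariant of isomorphism classes in $\CMS(A)$. If $M\cong M'$ in $\CMS(A)$, then $M\oplus F\cong M'\oplus G$ with $F,G$ free. For free $F$ we have $\Omega^s(M\oplus F)\cong \Omega^s M$ up to free summands for every $s\in\Z$; for negative $s$ this uses $(M\oplus F)^*=M^*\oplus F^*$ and $\Omega(F^*)=0$. Free summands do not affect $\Tor_1^A(-,N/I^nN)$. Hence $r_1^{I,N}(\Omega^s M)=r_1^{I,N}(\Omega^s M')$ for all $s$, and so $r_\infty^{I,N}(M)=r_\infty^{I,N}(M')$. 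Alternatively, one can apply \ref{thickproperties}(II) in both directions, together with the fact that adding a free summand does not change $r_\infty$.

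With these two facts, the three inequalities follow directly from \ref{thickproperties}(III)(i)--(iii) applied to $0\to X\to Y'\to Z'\to 0$. The roles are $U=X$, $V=Y'$, $W=Z'$. Part (i) of the lemma corresponds to (III)(ii), part (ii) to (III)(i), and part (iii) to (III)(iii). Replacing $Y'$ and $Z'$ by $Y$ and $Z$ via the invariance step finishes the argument.

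The main obstacle is the first step: making sure every distinguished triangle, not just the ones induced by exact sequences, comes from a short exact sequence of MCM modules up to isomorphism in $\CMS(A)$. I would justify this by quoting the construction of triangles in \cite{CM(A)}, where distinguished triangles are by definition those isomorphic to standard triangles built from such sequences. If needed, I would also use rotation, together with (I) ($r_\infty$ is invariant under $\Omega^{\pm1}$), to handle the shifted terms.
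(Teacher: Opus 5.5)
Your proposal is correct and follows essentially the paper's route. The paper also turns the triangle into a short exact sequence of MCM modules, using the rotated sequence $0\to Y\to Z\oplus F\to\Omega^{-1}X\to 0$ from the same pushout together with (I), and then applies \ref{thickproperties}; you instead use $0\to X\to Y\oplus A^{\tau}\to Z'\to 0$ directly, and your explicit check that $r_\infty^{I,N}$ is invariant under stable isomorphism supplies a step the paper leaves implicit.
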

\begin{proof}
 By the properties of a distinguished triangle in $\underline{\operatorname{CM}}(A)$, we have an exact sequence $0\longrightarrow Y \longrightarrow Z\oplus F\longrightarrow \Omega^{-1}X\longrightarrow 0$, where $F$ is free. Therefore, we apply \ref{thickproperties} to obtain the desired inequalities.
\end{proof}
\begin{point}\normalfont  \textbf{Construction of \textit{thick}$(E)$:}
    Let $E\in \underline{\operatorname{CM}}(A)$. Recall that, \textit{thick}$(E)$ is the thick subcategory of $\underline{\operatorname{CM}}(A)$ defined as the intersection of all thick subcategories of $\underline{\operatorname{CM}}(A)$ that contains $E$. We now recall a concrete construction of thick$(E)$. To this end, we define the following notations; for two additive subcategories
$\chi$ and $\chi'$ of $\underline{\operatorname{CM}}(A)$, we set $\chi*\chi'=\{D\in \underline{\operatorname{CM}}(A):$there exists a triangle $M\longrightarrow D\longrightarrow M'\longrightarrow
\Omega^{-1}M $, where $ M\in \chi$ and $M'\in \chi'\}$ and $<\chi*\chi'>=\{W\in \underline{\operatorname{CM}}(A):$ there exists $s\in \Z$ and $D\in \chi*\chi'$, such that $W|\Omega^sD\}$.

Now, we consider the  additive subcategory $$\operatorname{add}(E)=\{V\in \underline{\operatorname{CM}}(A):V|E^m \ \operatorname{for \ some} \ m\in \N\}$$ of $\underline{\operatorname{CM}}(A)$. We set $\chi_1=\chi_2=\operatorname{add}(E)$, and for $i\ge 3$,  define recursively $$\chi_i=<\chi_{i-1}*\chi_{i-2}>.$$ Then we have
$$\operatorname{thick}(E)=\bigcup_{i\ge 1} \chi_i.$$
\end{point}
\begin{proposition} (With hypothesis as in \ref{setup8}) \label{thickle} Let $E\in \underline{\operatorname{CM}}(A)$. Then for any $M\in \textit{thick}(E)$, we have $r_\infty^{I,N}(M)\le r_\infty^{I,N}(E)$.
\end{proposition}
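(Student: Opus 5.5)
We argue by induction along the filtration $\operatorname{thick}(E)=\bigcup_{i\ge1}\chi_i$ from the explicit construction of thick$(E)$. The claim to establish for every $i\ge 1$ is
$$r_\infty^{I,N}(M)\le r_\infty^{I,N}(E)\quad\text{for all } M\in\chi_i .$$
Since every $M\in\operatorname{thick}(E)$ lies in some $\chi_i$, this gives the proposition. Throughout we freely replace objects of $\CMS(A)$ by isomorphic ones. This is harmless because $r_\infty^{I,N}$ is an invariant of the stable isomorphism class: if $M\oplus F\cong M'\oplus G$ with $F,G$ free, then the syzygies and cosyzygies of $M$ and $M'$ agree up to free summands, and free summands do not affect $\Tor_1^A(-,N/I^nN)$.

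\textbf{Base case $i=1,2$.} Here $\chi_1=\chi_2=\operatorname{add}(E)$. Let $V\in\operatorname{add}(E)$, so $V\mid E^m$ for some $m$. By \ref{thickproperties}(II), $r_\infty^{I,N}(V)\le r_\infty^{I,N}(E^m)$, so it suffices to show $r_\infty^{I,N}(E^m)=r_\infty^{I,N}(E)$.

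Syzygies and cosyzygies commute with finite direct sums up to free summands, so $\Omega^sE^m\cong(\Omega^sE)^m\oplus F_s$ with $F_s$ free. Hence
$$\lambda\bigl(\Tor_1^A(\Omega^sE^m,N/I^nN)\bigr)=m\,\lambda\bigl(\Tor_1^A(\Omega^sE,N/I^nN)\bigr),$$
and the degrees in $n$ coincide for every $s\in\Z$. (Alternatively, one can induct on $m$ using the split exact sequence $0\to E\to E^m\to E^{m-1}\to0$ and \ref{thickproperties}(III)(i).)

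\textbf{Inductive step $i\ge3$.} Let $W\in\chi_i=\langle\chi_{i-1}*\chi_{i-2}\rangle$. Then $W\mid\Omega^sD$ for some $s\in\Z$ and some $D$ admitting a distinguished triangle $M\to D\to M'\to\Omega^{-1}M$ with $M\in\chi_{i-1}$ and $M'\in\chi_{i-2}$. Applying \ref{triangleinfinity}(ii) and then the induction hypothesis gives
$$r_\infty^{I,N}(D)\le\max\{r_\infty^{I,N}(M),r_\infty^{I,N}(M')\}\le r_\infty^{I,N}(E).$$
Next, \ref{thickproperties}(I) gives $r_\infty^{I,N}(\Omega^sD)=r_\infty^{I,N}(D)$. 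Finally, \ref{thickproperties}(II), applied to the summand $W$ of $\Omega^sD$ (the statement holds up to free summands as well), gives $r_\infty^{I,N}(W)\le r_\infty^{I,N}(E)$.

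The only point needing care is that a direct-summand relation or isomorphism in $\CMS(A)$ holds only up to free summands, so we must check that \ref{thickproperties}(II) still applies when $W\oplus F\mid\Omega^sD\oplus G$ with $F,G$ free. This follows from the stable-invariance remark in the first paragraph. Everything else is a direct combination of \ref{thickproperties} and \ref{triangleinfinity}.
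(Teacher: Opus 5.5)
Your proof is correct and follows the paper's argument: induction along the filtration $\operatorname{thick}(E)=\bigcup_{i\ge1}\chi_i$, with the base case handled by \ref{thickproperties}(II) and the inductive step by \ref{triangleinfinity} together with \ref{thickproperties}(I),(II). Your extra justifications, namely that $r_\infty^{I,N}(E^m)=r_\infty^{I,N}(E)$ and that $r_\infty^{I,N}$ does not change when free summands are added, fill in points the paper uses without comment.
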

\begin{proof}
     From the construction of $\operatorname{thick}(E)$, there exists an integer $i\ge 1$, such that $M\in \chi_i$. We proceed by induction on $i$.  If $i=1$ or $2$, there exists an integer $m\ge 1$ such that $M|E^m$, which implies $r_\infty^{I,N}(M)\le r_\infty^{I,N}(E^m)=r_\infty^{I,N}(E)$. Assume that $i\ge 3$ and that the inequality holds for all integers $j<i$. We have $\chi_i=<\chi_{i-1}*\chi_{i-2}>$.  If $D\in \chi_{i-1}*\chi_{i-2}$, by definition there exists a triangle $V\longrightarrow D\longrightarrow W\longrightarrow \Omega^{-1}V$ in $\underline{\operatorname{CM}}(A)$, where $V\in\chi_{i-1}$ and $W\in \chi_{i-2}$. Thus, by induction hypothesis and \ref{triangleinfinity}, we have $r_\infty^{I,N}(D)\le r_\infty^{I,N}(E)$. Finally, there exists an integer $s$ and $D\in\chi_{i-1}*\chi_{i-2}$ such that $M|\Omega^sD$. Hence, applying \ref{thickproperties} yields the desired inequality.
\end{proof}
We proceed to study additional thick subcategories of $\underline{\operatorname{CM}}(A)$. To this end, for each integer $q\ge -1$, we define  $$K_q=\{M\in \underline{\operatorname{CM}}(A):r_\infty^{I,N}(M)\le q\}.$$
First, we show that it is indeed a thick subcategory of $\underline{\operatorname{CM}}(A)$.
\begin{lemma}
    (With hypothesis as in \ref{setup8}) $K_q$ is a thick subcategory $\underline{\operatorname{CM}}(A)$ for each integer $q\ge -1$.
\end{lemma}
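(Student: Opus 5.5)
We will check the axioms of a thick subcategory of the triangulated category $\CMS(A)$ for $K_q$ directly. These axioms are: $K_q$ is a full subcategory containing $0$, it is closed under isomorphisms in $\CMS(A)$, it is closed under the translation $\Omega^{-1}$ and its inverse $\Omega$, it is closed under direct summands, and it has the two-out-of-three property for distinguished triangles. Each axiom should follow from the properties of $r_\infty^{I,N}$ established in \ref{thickproperties} and \ref{triangleinfinity}.

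\textbf{Zero object and isomorphisms.} First, the zero object of $\CMS(A)$ is represented by free modules. For $F$ free, every $\Omega^sF$ is free (up to free summands, which do not matter), so $\Tor_1^A(\Omega^sF,N/I^nN)=0$. Hence $r_\infty^{I,N}(F)=-1\le q$, and $0\in K_q$.

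Next, we show that $r_\infty^{I,N}$ is well defined on isomorphism classes in $\CMS(A)$. If $M\cong M'$ in $\CMS(A)$, then $M\oplus F\cong M'\oplus G$ as $A$-modules for some free $F,G$. By \ref{thickproperties}(II), $r_\infty^{I,N}(M)\le r_\infty^{I,N}(M\oplus F)=r_\infty^{I,N}(M'\oplus G)$. Since $\Tor_1$ is additive and vanishes on free modules, we also have $r_\infty^{I,N}(M'\oplus G)=r_\infty^{I,N}(M')$. By symmetry the two values agree, so $K_q$ is closed under isomorphisms.

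\textbf{Translation, summands, and triangles.}
\begin{itemize}
\item Closure under $\Omega^{\pm1}$ is exactly \ref{thickproperties}(I): $r_\infty^{I,N}(\Omega^iM)=r_\infty^{I,N}(M)$ for all $i\in\Z$.
\item Closure under direct summands is \ref{thickproperties}(II): if $W\,|\,M$ and $M\in K_q$, then $r_\infty^{I,N}(W)\le r_\infty^{I,N}(M)\le q$.
\item Closure under finite direct sums follows from \ref{thickproperties}(III)(i), applied to the split exact sequence $0\to U\to U\oplus W\to W\to 0$.
\item For a distinguished triangle $X\to Y\to Z\to\Omega^{-1}X$ in which two of the three objects lie in $K_q$, \ref{triangleinfinity} bounds $r_\infty^{I,N}$ of the third object by the maximum of the other two values. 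That maximum is at most $q$, so the third object lies in $K_q$.
\end{itemize}

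\textbf{Expected obstacle.} The only delicate point is the well-definedness of $r_\infty^{I,N}$ on stable isomorphism classes. This requires that adding free summands does not change $r_1^{I,N}(\Omega^s(-))$ for any $s$, including negative $s$ where cosyzygies are used. For negative $s$ we will use that $(M\oplus F)^*=M^*\oplus F^*$ and that the minimal presentation only adds free summands. Alternatively, this can be deduced directly from \ref{thickproperties}(II) in both directions together with additivity of $\Tor$. Everything else is a direct citation of the earlier properties.
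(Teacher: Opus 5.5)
Your proposal is correct and follows the paper's proof. Like the paper, you check the thick-subcategory axioms one at a time, citing \ref{thickproperties}(I) for closure under $\Omega^{\pm1}$, \ref{triangleinfinity} for the two-out-of-three property on triangles, and (II) for summands (the paper phrases the summand step as $r_\infty^{I,N}(X\oplus Y)=\max\{r_\infty^{I,N}(X),r_\infty^{I,N}(Y)\}$); your added detail on the zero object and on invariance of $r_\infty^{I,N}$ under adding free summands, including for cosyzygies, fills in what the paper simply calls ``clear''.
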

\begin{proof}
    We have
    \begin{enumerate} [\rm (i)]
        \item $0\in K_q$.
        \item If $X\cong Y$ and $X\in K_q$, then clearly $Y\in K_q$.
        \item If $X\in K_q$, then by \ref{thickproperties} we have $\Omega X,\Omega^{-1}X\in K_q$.
        \item Let $X\longrightarrow Y\longrightarrow Z\longrightarrow\Omega^{-1}X$ be a distinguished  triangle in $\CMS(A)$. If two of $X,Y,Z$ belong to $K_q$, then so is the third, follows from \ref{triangleinfinity}.
        \item Let $X,Y\in \underline{\operatorname{CM}}(A)$ with $X\oplus Y \in K_q$. So we have
        $$\operatorname{max}\{r_\infty^{I,N}(X),r_\infty^{I,N}(Y)\}=r_\infty^{I,N}(X\oplus Y) \le q,$$
        which yields $X,Y\in K_q$.
    \end{enumerate}
    Thus, $K_q$ is a thick subcategory of $\underline{\operatorname{CM}}(A)$.
\end{proof}
Next, we prove that the thick subcategory $K_q$ contains a non-free maximal Cohen–Macaulay module of minimal complexity, when $A$ is complete intersection.
\begin{lemma} \label{Kqcx1}
 (With hypothesis as in \ref{setup8}) Let $(A,\m)$ be a local complete intersection. Then, with the above notation, for each integer $q\ge-1$, either $K_q=0$ or $K_q$ contains an MCM $A$-module $E$ such that $\operatorname{cx}_AE=1$.
\end{lemma}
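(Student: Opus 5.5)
This is a direct combination of two results already established: $K_q$ is a thick subcategory of $\CMS(A)$ (the lemma immediately preceding), and Lemma \ref{thichkcx1}, which says that every nonzero thick subcategory of $\CMS(A)$ over a non-regular complete intersection contains an MCM module of complexity one.

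Fix $q\ge -1$ and assume $K_q\neq 0$; otherwise there is nothing to prove.

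\begin{enumerate}[\rm (1)]
\item By the preceding lemma, $K_q$ is a thick subcategory of $\CMS(A)$. Here "nonzero" means $K_q$ contains an object that is nonzero in $\CMS(A)$, that is, a non-free MCM $A$-module.
\item The standing hypothesis \ref{setup8} says $A$ is non-regular, and we are now assuming $A$ is a complete intersection. So the hypotheses of Lemma \ref{thichkcx1} are satisfied.
\item Lemma \ref{thichkcx1} then produces an MCM $A$-module $E\in K_q$ with $\cx_A E=1$, which is what we want.
\end{enumerate}

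For the reader, recall how that lemma works. Starting from a non-free $M$ of complexity $c\ge 2$, it uses a short exact sequence $0\to C\to \Omega^{n_0+2r}M\to \Omega^{n_0}M\to 0$ with $\cx_A C=c-1$. This gives a distinguished triangle, and thickness (closure under syzygies and two-out-of-three) puts $C$ in the subcategory. Induction on complexity finishes the argument.

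The only points needing care are, first, that $C$ is again non-free, which holds since its complexity $c-1\ge 1$; and second, that complexity is finite and positive for non-free modules over a complete intersection, so the descent terminates at complexity one. Both are already built into Lemma \ref{thichkcx1}.

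I expect no real obstacle. The substantive work was showing that $K_q$ is closed under triangles, which was done via Lemma \ref{triangleinfinity} and the properties in \ref{thickproperties}.
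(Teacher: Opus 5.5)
Your proof is correct and is exactly the paper's argument: the paper proves this lemma in one line by combining the thickness of $K_q$ (the preceding lemma) with Lemma \ref{thichkcx1}. Your extra remarks on why the descent inside Lemma \ref{thichkcx1} stays among non-free modules and terminates at complexity one are accurate but not needed.
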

\begin{proof}
    Follows from the above lemma and \ref{thichkcx1}.
\end{proof}

\begin{point}\normalfont
We have a decreasing chain of thick subcategories of $\underline{\operatorname{CM}}(A)$
$$\underline{\operatorname{CM}}(A)=K_{t-1}\supseteq K_{t-2}\supseteq\ldots \ldots\supseteq K_0\supseteq K_{-1}\supseteq 0.$$
\end{point}
Set $$l=\operatorname{min}\{q\ge-1|K_q\neq 0\}.$$
Note that for every non-zero MCM $A$-module $E\in K_l$, we have $r_\infty^{I,N}(E)=l$. Therefore, for each $0\ne E\in K_l$, we obtain \begin{equation} \label{eq28}
   r_\infty^{I,N}(E)=\operatorname{inf\{r_\infty^{I,N}(M)|0\neq M\in \underline{CM}(A)\}}.
\end{equation}
As a corollary of the above results, we obtain the following.
\begin{theorem} \label{cx1CI}

    Let $(A,\m)$ be a complete intersection local ring of dimension $\ge1$ and $N$ a perfect $A$-module of dimension $\ge1$ and $I$ an ideal of definition of $N$. Then, with the above notation, there exists a MCM $A$-module $E$ with of \textit{complexity} 1 such that $$r_1^{I,N}(E)=\inf \{r_\infty^{I,N}(M)|0\ne M \in \CMS(A)\}.$$

\end{theorem}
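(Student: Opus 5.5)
Set $l=\min\{q\ge -1 \mid K_q\neq 0\}$; this is well defined because the chain of thick subcategories $K_{t-1}\supseteq\cdots\supseteq K_{-1}$ ends at $K_{t-1}=\CMS(A)$, which is nonzero since $A$ is non-regular (so $\Omega^dk$ is a non-free MCM module). By \eqref{eq28}, every nonzero $E\in K_l$ satisfies
\[
r_\infty^{I,N}(E)=\inf\{r_\infty^{I,N}(M)\mid 0\ne M\in\CMS(A)\}=l .
\]
Since $K_l\neq 0$, Lemma \ref{Kqcx1} gives an MCM module $E\in K_l$ with $\cx_AE=1$. In particular $E\neq 0$ in $\CMS(A)$, i.e.\ $E$ is non-free, so $r_\infty^{I,N}(E)=l$.

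It remains to replace $r_\infty^{I,N}(E)$ by $r_1^{I,N}(E)$, and also to arrange $\Omega^2E\cong E$ as in Theorem IV. First I remove free summands: write $E=E'\oplus F$ with $F$ free and $E'$ having no free summand. Then $E'\cong E$ in $\CMS(A)$, so $E'\in K_l$ still. Moreover $E'$ has the same complexity and, since $\Tor_1$ kills free modules, the same $r_1$ and $r_\infty$. By the recalled fact on complexity one, an MCM module with no free summand and $\cx=1$ is $2$-periodic, so $\Omega^2E'\cong E'$.

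Now I compute $r_\infty^{I,N}(E')$. For $s\ge 0$ we have $\Omega^{2s}E'\cong E'$ and $\Omega^{2s+1}E'\cong\Omega E'$. Negative $s$ reduce to these too: $\Omega\Omega^{-1}X\cong X$ and $\Omega^{-1}$ preserves the absence of free summands, so $\Omega^{-1}$ is inverse to $\Omega$ on such modules and periodicity persists. Hence
\[
r_\infty^{I,N}(E')=\max\{r_1^{I,N}(E'),\,r_1^{I,N}(\Omega E')\}=l .
\]
If $r_1^{I,N}(E')=l$, take $E=E'$ and we are done. Otherwise $r_1^{I,N}(\Omega E')=l$. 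In that case $\Omega E'$ is nonzero in $K_l$ (thickness), has no free summand, has complexity $1$, and is $2$-periodic, so it serves as the desired module.

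The only delicate point is the periodicity bookkeeping for negative syzygies and for removing free summands. This is routine given the cosyzygy construction in \ref{cosyzygy}.
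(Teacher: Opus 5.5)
Your argument is correct and is essentially the paper's proof: take the minimal $l$ with $K_l\neq 0$, use Lemma \ref{Kqcx1} (via Lemma \ref{thichkcx1}) to get a complexity-one module $W\in K_l$ with $r_\infty^{I,N}(W)=l$ by \eqref{eq28}, and then pass to a syzygy that realizes the supremum. The paper simply notes that this supremum is attained at some $\Omega^sW$, since the values lie in $\{-1,\dots,t-1\}$; your removal of free summands and use of $\Omega^2E'\cong E'$ to reduce to $s\in\{0,1\}$ is extra bookkeeping, and it additionally gives the literal 2-periodicity $\Omega^2E\cong E$ claimed in Theorem IV.
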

\begin{proof} Let $l$ be as above. Since $K_l$ is a non-zero thick subcategory of $\CMS(A)$, so $K_l$ contains a MCM $A$-module $W$ of complexity 1 (see \ref{thichkcx1}). From equation (\ref{eq28}), we have $$r_\infty^{I,N}(W)=\operatorname{inf\{r_\infty^{I,N}(M)|0\neq M\in \underline{CM}(A)\}}.$$ Now there exists an integer $s$, such that $r_1^{I,N}(\Omega^sW)=r_\infty^{I,N}(W)$. It is easy to see that $\cx_A(\Omega^sW)=1$ as $\cx_AW=1$. Thus $E=\Omega^sW$ suffices.
\end{proof}
\begin{remark} \normalfont Let $E$ be a MCM $A$-module with no free summand such that $\cx_AE=1$. It follows from \cite[Theorem 9.2.1]{cx1} that $E$ is 2-periodic. Consequently $\Omega^sM\cong \Omega^{s-2}M$ for every integer $s$; use $E\cong \Omega \Omega^{-1}E$. By induction on $t=\dim N$, together with an argument similar to that in \cite[Lemma 2.2 and Theorem 5.1]{HypersurfaceI}, we obtain for each integer $i\ge 1$ that
$r_i^{I,N}(E)=r_1^{I,N}(E)=s^1_{I,N}(E)=s^i_{I,N}(E).$
\end{remark}

\section{Regularity and Depth} \phantomsection \label{section 7}
In this section, we investigate situations in which the upper bound in \ref{abound} is not attained. We also explore further applications within the stable category $\CMS(A)$. Throughout we work with the following setup.
\begin{setup}\normalfont \label{setup6} Let $(A,\m,k)$ be a non-regular Gorenstein ring of dimension $d\ge 1$ and $N$ a perfect module of dimension $t\ge 1$ and $I$ is an ideal of definition of $N$. Recall, for a MCM $A$-module $M$, we set
$$r^{I,N}_\infty(M)=\operatorname{sup}\{r^{I,N}_1(\Omega^sM)|s\in \Z\}.$$

\end{setup}

\begin{point}\normalfont
    Let $M$ be a MCM $A$-module, then $M\otimes N$ is a CM $A$-module of dimension $t$ (see \ref{homtensorCM}). Therefore invoking \ref{b_j<a_j+1}, for all $j=0,\ldots,t-1$  we have
    \begin{equation} \label{eq32'}
        \operatorname{end}( H^j_{\mathfrak{M}}(L^I(M \otimes N))) \le \operatorname{end}(  H^{j+1}_{\mathfrak{M}}(G_I(M \otimes N))).
    \end{equation}
We tensor the surjective map $A^{\mu(M)}\longrightarrow M \longrightarrow 0$ with $N$ to obtain the surjective map $N^{\mu(M)}\longrightarrow M\otimes N\longrightarrow 0$. So for each $n\ge 0$, we have a surjective map
    $$(\frac{I^nN}{I^{n+1}N})^{\mu(M)} \longrightarrow \frac{I^n(M\otimes N)}{I^{n+1}(M\otimes N)} \longrightarrow 0,$$
which induces a surjective map of $\mathcal{R}(I)$-modules
$$G_I(N)^{\mu(M)}\longrightarrow G_I(M\otimes N)\longrightarrow0.$$
Since $\operatorname{dim}N=t=\operatorname{dim} M\otimes N$, therefore we have a surjective map in local cohomology  $$ H^{t}_{\mathfrak{M}}(G_I( N))^{\mu(M)} \longrightarrow H^{t}_{\mathfrak{M}}(G_I(M \otimes N)) \longrightarrow 0,$$
which implies $ \operatorname{end}( H^{t}_{\mathfrak{M}}(G_I(M \otimes N)))\le \operatorname{end} (H^{t}_{\mathfrak{M}}(G_I( N))).$ Thus from equation (\ref{eq32'}) we obtain
$$\operatorname{end}( H^{t-1}_{\mathfrak{M}}(L^I(M \otimes N))) \le \operatorname{end}(  H^{t}_{\mathfrak{M}}(G_I(N))).$$ Note the right-hand side of the above equation depends only on $I,N$. We now generalize this pattern in the following result.
\end{point}
\begin{lemma} \label{boundLI} (With hypothesis as in \ref{setup6}) Let $M$ be a MCM $A$-module such that $r_i^{I,N}(M)\le t-q$ for some integer $q\ge 1$ and for every integer $i\ge 1$. There exits an integer $\eta$, depending only on $I$ and $N$, such that for every integer $j=t-q,\ldots,t-1$ we have
$$\operatorname{end} (H^j_{\mathfrak{M}}(L^I(M \otimes N)))\le \eta.$$
\end{lemma}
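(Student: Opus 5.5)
The plan is to compare $L^I(M\otimes N)$ with $L^I(N)$ and with $L^I(\Omega M\otimes N)$, using the $\mathcal{R}(I)$-module $L_1^{I,N}(M)$. By Proposition \ref{LiEj} this module is finitely generated, and its Krull dimension is small by hypothesis. I then iterate along syzygies until the cohomological index reaches $t-1$, where the bound from the preceding paragraph applies. Concretely, I would take
$$\eta=\max\Big\{\operatorname{end}(H^t_{\mathfrak{M}}(G_I(N))),\ \operatorname{end}(H^j_{\mathfrak{M}}(L^I(N)))\ :\ 0\le j\le t-1\Big\}.$$
This is finite because $N$ is Cohen--Macaulay of dimension $t$, so $H^j_{\mathfrak{M}}(L^I(N))$ is *-Artinian for $j\le t-1$ (see \ref{b_j<a_j+1}). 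It depends only on $I$ and $N$.

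First, tensor $0\to\Omega M\to A^{\mu(M)}\to M\to 0$ with $N/I^{n+1}N$. Since $\Tor_1^A(A^{\mu(M)},-)=0$, this gives for every $n$ an exact sequence
$$0\to \Tor_1^A(M,N/I^{n+1}N)\to \Omega M\otimes N/I^{n+1}N\to (N/I^{n+1}N)^{\mu(M)}\to M\otimes N/I^{n+1}N\to 0 .$$
These sequences are compatible with multiplication by elements of $I^k u^k$. Summing over $n$ therefore yields an exact sequence of graded $\mathcal{R}(I)$-modules
$$0\to L_1^{I,N}(M)\to L^I(\Omega M\otimes N)\to L^I(N)^{\mu(M)}\to L^I(M\otimes N)\to 0 .$$
Let $K$ be the middle image, and split this into two short exact sequences. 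The module $L_1^{I,N}(M)$ is finitely generated with finite-length components, and its Hilbert function has degree $r_1^{I,N}(M)\le t-q$. Hence its dimension is at most $t-q+1$, so $H^i_{\mathfrak{M}}(L_1^{I,N}(M))=0$ for $i\ge t-q+2$.

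The long exact sequences then give, for $t-q\le j\le t-2$,
$$H^j_{\mathfrak{M}}(L^I(N))^{\mu(M)}\to H^j_{\mathfrak{M}}(L^I(M\otimes N))\to H^{j+1}_{\mathfrak{M}}(K),$$
$$H^{j+1}_{\mathfrak{M}}(L^I(\Omega M\otimes N))\to H^{j+1}_{\mathfrak{M}}(K)\to H^{j+2}_{\mathfrak{M}}(L_1^{I,N}(M))=0.$$
Taking ends degreewise, it follows that
$$\operatorname{end}H^j_{\mathfrak{M}}(L^I(M\otimes N))\le\max\{\eta,\ \operatorname{end}H^{j+1}_{\mathfrak{M}}(L^I(\Omega M\otimes N))\}.$$
All the modules involved are *-Artinian, because $j+1\le t-1=\depth(\Omega M\otimes N)-1$ by Proposition \ref{homtensorCM}, so these ends are finite.

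Next I iterate. The hypothesis passes to $\Omega^kM$, since $r_i^{I,N}(\Omega^kM)=r_{i+k}^{I,N}(M)\le t-q$ by Remark \ref{etpi=1}. Applying the inequality above to $\Omega M,\Omega^2M,\dots$ raises the cohomological index by one at each step until it reaches $t-1$. There the paragraph preceding the lemma gives $\operatorname{end}H^{t-1}_{\mathfrak{M}}(L^I(\Omega^kM\otimes N))\le \operatorname{end}H^t_{\mathfrak{M}}(G_I(N))\le\eta$ for every MCM module. The case $j=t-1$ is exactly this bound, and induction downward from $j=t-1$ gives the claim for all $j$ in $[t-q,t-1]$.

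The main obstacle I expect is bookkeeping rather than ideas. One must check that the displayed sequence really is a sequence of $\mathcal{R}(I)$-modules, i.e.\ that the transition maps are compatible. One must also verify that the Krull dimension of $L_1^{I,N}(M)$ equals $r_1^{I,N}(M)+1$, so that Grothendieck vanishing applies with respect to $\mathfrak{M}$. Finally, the degenerate case $r_1^{I,N}(M)=-1$ needs a separate remark: then $L_1^{I,N}(M)$ is of finite length and all its higher local cohomology vanishes, so the argument is unchanged.
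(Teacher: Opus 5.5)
Your proof is correct and follows the paper's argument: the same four-term sequence $0\to L_1^{I,N}(M)\to L^I(\Omega M\otimes N)\to L^I(N)^{\mu(M)}\to L^I(M\otimes N)\to 0$, the same bound $\dim L_1^{I,N}(M)\le t-q+1$ killing $H^{j+2}_{\mathfrak{M}}(L_1^{I,N}(M))$, and the same base case $j=t-1$ taken from the preceding paragraph. The paper organizes the iteration as induction on $q$, applying the $(q-1)$ case to $\Omega M$; you unwind it as a descending chain along $\Omega^kM$ with a single $\eta$ defined as a maximum, which is slightly cleaner than the paper's additive choice $\eta=\operatorname{end}(H^{t-q}_{\mathfrak{M}}(L^I(N)))+\eta'$.
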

\begin{proof}
 We proceed by induction on $q$. For $q=1$, the assertion precisely from the discussion above. Assume $q\ge 2$. Suppose that the result holds for $q-1$, that is, there exists an integer $\eta'$ (depending only on $I$ and $N$) such that for every integer $j=t-q+1,\ldots,t-1$, we have
    \begin{equation}
        \operatorname{end}(H^j_\mathfrak{M}(L^I(W\otimes N))) \le \eta'.
    \end{equation}
 We tensor the exact sequence $0\longrightarrow \Omega M\longrightarrow A^{\mu(M)}\longrightarrow M \longrightarrow 0$ with $N/I^{n+1}N$, for each $n\ge 0$ to obtain the following exact sequence of $\mathcal{R}(I)$-modules $$
   0\longrightarrow L_1^{I,N}(M)\longrightarrow L^I(\Omega M\otimes N) \xlongrightarrow{\phi} L^I(N)^{\mu(M)} \longrightarrow L^I(M\otimes N) \longrightarrow 0. $$
Set $C=\operatorname{Im}\phi$. By hypothesis we have $r_1^{I,N}(M)\le t-q$. Now $L_1^{I,N}(M)=\bigoplus _{n\ge 0}\Tor_1^A(M,N/I^{n+1}N)$ is a finitely generated graded $\mathcal{R}(I)$-module, so $\dim L_1^{I,N}(M)=r_1^{I,N}(M)+1\le t-q+ 1$, thus $H^j_\mathfrak{M}(L_1^{I,N}(M))=0$ for all $j\ge t-q+2$. From
the above exact sequence of $\mathcal{R}(I)$-modules, we have the following exact sequence of graded local cohomology $H^{t-q+1}_\mathfrak{M}(L^I((\Omega M)\otimes N))\longrightarrow H^{t-q+1}_\mathfrak{M}(C)\longrightarrow H^{t-q+2}_\mathfrak{M}(L^{I,N}_1(M))=0$. Note that $r_i^{I,N}(\Omega M)=r_{i+1}^{I,N}(M)\le t-q \le t-q+1$ for every integer $i\ge 1$. Hence by induction hypothesis we have
$$\operatorname{end}(H^{t-q+1}_\mathfrak{M}(C))\le \operatorname{end}(H^{t-q+1}_\mathfrak{M}(L^I((\Omega M)\otimes N))\le \eta'.$$
We also have another exact sequence of graded local cohomologies $$H^{t-q}_\mathfrak{M}(L^I(N))^{\mu (M)}\longrightarrow H^{t-q}_\mathfrak{M}(L^I(M\otimes N))\longrightarrow H^{t-q+1}_\mathfrak{M}(C),$$
which yields $$\operatorname{end}(H^{t-q}_\mathfrak{M}(L^I(M\otimes N)))\le \operatorname{end}(H^{t-q}_\mathfrak{M}(L^I(N)))+\eta'. $$
Thus, $\eta=\operatorname{end}(H^{t-q}_\mathfrak{M}(L^I(N)))+\eta'$ suffices.
\end{proof}
As a corollary of the above result, we obtain the following.

\begin{corollary} (With hypothesis as in \ref{setup6}) Let $E$ be a MCM $A$-module such that $r_\infty^{I,N}(E)\le t-q$ for some integer $q\ge 1$. Then there exits an integer $\eta$, depending only on $I$ and $N$, such that for every MCM $A$-module $M\in thick(E)$ and for every integer $j=t-q,\ldots,t-1$ we have
$$\operatorname{end} (H^j_{\mathfrak{M}}(L^I(M \otimes N)))\le \eta.$$
\end{corollary}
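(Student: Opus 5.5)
The corollary follows by feeding Proposition \ref{thickle} into Lemma \ref{boundLI}. The only thing to check is that the hypothesis of Lemma \ref{boundLI}, namely $r_i^{I,N}(M)\le t-q$ for every $i\ge 1$, holds for each $M\in\operatorname{thick}(E)$. The constant $\eta$ then comes directly from that lemma.

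First, let $M\in\operatorname{thick}(E)$ be an MCM $A$-module. By Proposition \ref{thickle} we have $r_\infty^{I,N}(M)\le r_\infty^{I,N}(E)\le t-q$. Next we bound each $r_i^{I,N}(M)$ by $r_\infty^{I,N}(M)$. By Remark \ref{etpi=1}, applied repeatedly, $r_i^{I,N}(M)=r_1^{I,N}(\Omega^{i-1}M)$ for $i\ge 1$. Since $\Omega^{i-1}M$ is one of the modules $\Omega^sM$, $s\in\Z$, occurring in the supremum defining $r_\infty^{I,N}(M)$, we get
$$r_i^{I,N}(M)=r_1^{I,N}(\Omega^{i-1}M)\le r_\infty^{I,N}(M)\le t-q \quad\text{for all } i\ge 1.$$

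Two degenerate cases need a remark. If $M$ is free in $\CMS(A)$, then the $\Tor$ functions vanish. In that case $r_i^{I,N}(M)=-1\le t-q$, so the hypothesis of Lemma \ref{boundLI} still holds. That lemma is stated for MCM modules, not only non-free ones, so nothing further is needed here. Also, objects of $\operatorname{thick}(E)$ are only defined up to stable isomorphism. Any module representing the object satisfies the same bound: $M\oplus F\cong M'\oplus G$ with $F,G$ free does not change $\Tor_{\ge1}$, and all $r_1^{I,N}(\Omega^s\,\cdot)$ are stable invariants.

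Finally, apply Lemma \ref{boundLI} to $M$ with the same $q$. It gives an integer $\eta$ with $\operatorname{end}(H^j_{\mathfrak{M}}(L^I(M\otimes N)))\le\eta$ for $j=t-q,\ldots,t-1$. The point to verify is that this $\eta$ is uniform in $M$. Inspecting the proof of Lemma \ref{boundLI}, $\eta$ is built recursively: at level $q$ it is $\operatorname{end}(H^{t-q}_{\mathfrak M}(L^I(N)))$ plus the level-$(q-1)$ constant, and at $q=1$ it is $\operatorname{end}(H^t_{\mathfrak M}(G_I(N)))$. Every ingredient depends only on $I$, $N$ and $q$, not on $M$. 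Hence the same $\eta$ works for all $M\in\operatorname{thick}(E)$, which proves the corollary. This uniformity is the main (though mild) obstacle: it has to be read off from the proof of Lemma \ref{boundLI} rather than from its statement.
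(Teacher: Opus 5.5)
Your proposal is correct and follows the paper's proof exactly: Proposition \ref{thickle} gives $r_\infty^{I,N}(M)\le r_\infty^{I,N}(E)\le t-q$, hence $r_i^{I,N}(M)=r_1^{I,N}(\Omega^{i-1}M)\le t-q$ for all $i\ge 1$, and Lemma \ref{boundLI} then gives the bound. You do not need to inspect the proof of Lemma \ref{boundLI} to get a uniform $\eta$, since its statement already asserts that $\eta$ depends only on $I$ and $N$.
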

\begin{proof}
    Let $M$ be a MCM $A$-module in thick$(E)$. From \ref{thickle}, we obtain $r_\infty^{I,N}(M)\le r_\infty^{I,N}(E)\le t-q$. In particular, we have $r_i^{I,N}(M)\le t-q$ for $i\ge1$. Thus the result follows.
\end{proof}
Next, we recall a result of Trung and present a generalized version in our setting. For this, the following theorem is essential.
\begin{theorem} \label{boundonGI}
    (with hypothesis as in \ref{setup6}) Let $E$ be a MCM $A$-module such that $r_\infty^{I,N}(E)< t-q$ for some integer $q\ge 1$. Then there exists an integer $\eta$, depending only on $I$ and $N$, such that for every MCM $A$-module $M\in \operatorname{thick}(E)$ and for every integer
 $j=t-q+1,\ldots,t-1$ we have $$a_j(G_I(M\otimes N))\le \eta.$$
\end{theorem}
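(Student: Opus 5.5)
We need bounds on $a_j(G_I(M\otimes N))$ for $j=t-q+1,\dots,t-1$. The plan is to combine the Corollary just proved, which bounds $\operatorname{end}(H^j_{\mathfrak{M}}(L^I(M\otimes N)))$, with the exact sequence $0\to G_I(X)\to L^I(X)\to L^I(X)(-1)\to 0$ of \ref{b_j<a_j+1}, where $X=M\otimes N$. Note $X$ is Cohen--Macaulay of dimension $t$ by \ref{homtensorCM}, so $\depth X=t$. Hence $H^j_{\mathfrak{M}}(L^I(X))$ is *-Artinian for $0\le j\le t-1$, and $a_j(G_I(X))=a_j^*(G_I(X))$ since $I$ is an ideal of definition.

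\textbf{Step 1.} Since $r_\infty^{I,N}(E)<t-q$, i.e.\ $r_\infty^{I,N}(E)\le t-(q+1)$, I apply the Corollary with $q+1$ in place of $q$. This gives an integer $\eta_0$, depending only on $I$ and $N$, with
$$\operatorname{end}(H^{j}_{\mathfrak{M}}(L^I(M\otimes N)))\le \eta_0 \quad\text{for } j=t-q-1,\dots,t-1$$
and every MCM $M\in\operatorname{thick}(E)$. (If $t-q-1<0$ the extra indices are vacuous. The strict inequality in the hypothesis is exactly what buys the extra index $j=t-q-1$, i.e.\ one degree below the range we need.)

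\textbf{Step 2.} I take the long exact sequence of graded local cohomology of $0\to G_I(X)\to L^I(X)\to L^I(X)(-1)\to 0$:
$$H^{j-1}_{\mathfrak{M}}(L^I(X))_{n}\to H^{j-1}_{\mathfrak{M}}(L^I(X))_{n-1}\to H^{j}_{\mathfrak{M}}(G_I(X))_n\to H^{j}_{\mathfrak{M}}(L^I(X))_n .$$
For $t-q+1\le j\le t-1$, both $j-1$ and $j$ lie in the range of Step 1. So for $n\ge \eta_0+2$ the two outer terms $H^{j-1}_{\mathfrak{M}}(L^I(X))_{n-1}$ and $H^{j}_{\mathfrak{M}}(L^I(X))_n$ vanish, which forces $H^j_{\mathfrak{M}}(G_I(X))_n=0$. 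Therefore
$$a_j(G_I(M\otimes N))=a_j^*(G_I(M\otimes N))\le \eta_0+1,$$
and $\eta=\eta_0+1$ works.

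The main point to check is Step 1's reindexing. In the proof of Lemma \ref{boundLI} the induction on $q$ starts at $j=t-1$ (the case $q=1$) and descends. With the hypothesis $r_i^{I,N}\le t-(q+1)$ for all $i$ and all syzygies (which holds inside $\operatorname{thick}(E)$ by \ref{thickle}), the lemma yields bounds down to $j=t-q-1$, and the constant depends only on $\operatorname{end}(H^{j}_{\mathfrak{M}}(L^I(N)))$ and $\operatorname{end}(H^t_{\mathfrak{M}}(G_I(N)))$. I also need $t-q-1\ge 0$ for the lemma to be meaningful. If instead $t-q-1=-1$, the bottom index $j=t-q+1=0$ is not in the required range anyway, so the argument is unaffected.
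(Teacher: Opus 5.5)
Your proposal is correct and follows the paper's proof: you bound $\operatorname{end}(H^{j}_{\mathfrak{M}}(L^I(M\otimes N)))$ via Lemma \ref{boundLI} and its corollary, then feed that into the long exact sequence of $0\to G_I(M\otimes N)\to L^I(M\otimes N)\to L^I(M\otimes N)(-1)\to 0$ to get $\eta=\eta_0+1$. The only difference is that the paper applies the corollary with $q$ itself, since the indices $j-1$ and $j$ you need already lie in $[t-q,t-1]$; your shift to $q+1$ via the strict inequality is therefore harmless but unnecessary, and the arithmetic slip $t-q+1=0$ in your last remark (it should be $1$) affects nothing.
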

\begin{proof}
    From the short exact sequence $\mathcal{R}(I)$-modules $$0\longrightarrow G_I(M\otimes N)\longrightarrow L^I(M\otimes N)\longrightarrow L^I(M\otimes N)(-1)\longrightarrow 0,$$
for each integer $j$, we have the following exact in local cohomology $$H^{j-1}_{\mathfrak{M}}(L^I(M\otimes N))(-1)\longrightarrow H^{j}_{\mathfrak{M}}(G_I(M\otimes N)) \longrightarrow H^{j}_{\mathfrak{M}}(L^I(M\otimes N)).$$
Hence from \ref{boundLI}, there exists an integer $\eta'$ (depending only on $I$ and $N$), such that  for each integer  $j=t-q+1,\ldots,t-1$ and $n>\eta'+1$ ,we obtain $H^{j}_{\mathfrak{M}}(G_I(M\otimes N))_n=0$. Thus $\eta=\eta'+1$ serves the purpose.
\end{proof}
We are now in a position to present a generalized version of Trung’s result in our setting. The following result is due to Trung (see \cite[Proposition 3.2]{Trung}).
\begin{proposition} \label{a_d<=red}
    Let $(A,\m,k)$ be a $d$-dimensional Noetherian ring and $I$  an $\m$-primary ideal. Assume the the residue field $k$ is infinite. Then for a finite $A$-module $M$ of dimension $d$, we have $a_d(G_I(M))+d\le \operatorname{red}(I)$, where $\operatorname{red}(I)$ denotes the reduction number of $I$.
\end{proposition}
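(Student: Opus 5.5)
Since $I$ is $\m$-primary and $k$ is infinite, I will pass to a minimal reduction and compare $G_I(M)$ with a Koszul-type (or quotient) module over a polynomial ring.

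\textbf{Step 1: set-up and reduction to the $a$-invariant of $G_I(M)$.} Choose a minimal reduction $J=(x_1,\ldots,x_d)$ of $I$ with $I^{r+1}=JI^r$, where $r=\operatorname{red}(I)$; such a $J$ exists since $k$ is infinite. Let $S=k'[X_1,\ldots,X_d]$ be the polynomial ring over $A/I$ (or over $A$) mapping to $G_I(A)$ by $X_i\mapsto x_i^*\in I/I^2$. Because $I^{n+1}M=JI^nM$ for $n\ge r$, $G_I(M)$ is generated as a module over $R=G_J(A)$, or over the subalgebra generated by $x_1^*,\ldots,x_d^*$, by its components of degree $\le r$. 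Moreover, the ideal $(x_1^*,\ldots,x_d^*)G_I(A)$ is $G_I(A)_+$-primary, since $J$ is a reduction. Consequently, by independence of the base for local cohomology, $H^d_{G_I(A)_+}(G_I(M))\cong H^d_{(x^*)}(G_I(M))$ can be computed as a module over $T=(A/I)[X_1,\ldots,X_d]$ via the Čech complex on $x_1^*,\ldots,x_d^*$.

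\textbf{Step 2: the top cohomology is right exact, so a presentation gives the bound.} Since $\dim T$-support is at most $d$, the functor $H^d_{(X)}(-)$ is right exact on graded $T$-modules. We have a graded surjection
$$\bigoplus_{n=0}^{r} \Big(\tfrac{I^nM}{I^{n+1}M}\Big)\otimes_{A/I} T(-n)\longrightarrow G_I(M)\longrightarrow 0,$$
where each $I^nM/I^{n+1}M$ is a finite-length $A/I$-module. Then
$$H^d_{(X)}\big(T(-n)\otimes V\big)\cong H^d_{(X)}(T)(-n)\otimes V,$$
and $H^d_{(X)}(T)$ lives in degrees $\le -d$. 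Hence the $n$-th summand has end $\le n-d\le r-d$. Right exactness then yields a surjection onto $H^d(G_I(M))$, so $a_d(G_I(M))\le r-d$, that is, $a_d(G_I(M))+d\le \operatorname{red}(I)$. When $\dim M<d$, the module $H^d$ vanishes and the inequality is trivial, with convention $-\infty$.

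\textbf{Main obstacle.} The delicate point is Step 1: justifying that $H^d_{G_I(A)_+}(G_I(M))$ equals $H^d$ computed over the polynomial ring $T$ with respect to the $x_i^*$. This needs two facts. First, $\sqrt{(x^*)G_I(A)}=\sqrt{G_I(A)_+}$, which follows from $I^{r+1}=JI^r$ because every element of $G_I(A)_+$ has a power in $(x^*)$. Second, local cohomology is invariant under the finite change of rings $T\to G_I(A)$ on the module. With those in hand, the right-exactness and Čech computation are routine. Alternatively, following Trung, one may instead use the Koszul/Čech sequence directly to see that $H^d(G_I(M))_n$ is a quotient of $G_I(M)_{n+d}/(x^*)G_I(M)_{n+d-1}$ in the appropriate degree, which vanishes once $n+d>r$.
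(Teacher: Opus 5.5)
Your proof is correct and is essentially the argument the paper relies on. The paper cites the Brodmann--Sharp proof for $M=A$ and observes, exactly as in your Step~1, that $I^{r+1}=JI^r$ gives $I^{n+1}M=JI^nM$ for $n\ge r$, so $G_I(M)$ is generated in degrees $\le r$ over $(A/I)[x_1^*,\ldots,x_d^*]$; right exactness of the top local cohomology then gives $a_d(G_I(M))\le r-d$, as you write. The only inaccuracy is your closing ``alternative'': the natural map $L_{n+d}\to H^d_{(x)}(L)_n$, $m\mapsto[m/x_1\cdots x_d]$, is not surjective in general (take $L=T$ and $n=-d-1$), but your main argument never uses it.
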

\begin{proof}
   The case $M=A$ was proved in \cite[Theorem 18.3.12]{Brodman-Sharp} and an identical argument yields the result for arbitrary  $A$-module $M$ of dimension $d$ . The key observation is that any (minimal) reduction of $I$ with respect to $A$ is also a (minimal) reduction of $I$ with respect to $M$.
\end{proof}
 As a corollary of Theorem \ref{boundonGI}, we obtain a generalized version of the above result.
\begin{theorem} \label{Trunggen}
    Let $(A,\m,k)$ be a non-regular Gorenstein local ring of dimension $d\ge 1$ and $I$ an $\m$-primary ideal. Let $E$ be a MCM $A$-module. If $r_\infty^{I,A}(E)\le d-q$ for some integer $q\ge 1$, then there exists an integer $\eta$, depending only on $I$ and $A$, such that for every MCM $A$-module $M\in \operatorname{thick}(E)$, we have
    $$\operatorname{max}\{a_d(G_I(M)),\ldots, a_{d-q+1}(G_I(M))\}\le \eta.$$
\end{theorem}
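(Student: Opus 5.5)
Specialize the earlier results to $N=A$, so that $t=d$. Then $A$ is a perfect $A$-module of dimension $d\ge 1$, $I$ is an ideal of definition of $A$, and $M\otimes_A A\cong M$. Hence $L^I(M\otimes N)=L^I(M)$ and $G_I(M\otimes N)=G_I(M)$, and the hypotheses of \ref{setup6} hold. Using \ref{basechange}, we may assume $k$ is infinite; this is needed for Trung's bound. Both $a_j(G_I(M))$ and $r_i^{I,A}$ are unchanged by this faithfully flat extension. Thickness of $\operatorname{thick}(E)$ is not needed after base change, since we only use the numerical bound $r_\infty^{I,A}(M)\le r_\infty^{I,A}(E)$ from \ref{thickle}, applied over $A$ before base change.

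\textbf{Step 1 (indices $d-q+1,\dots,d-1$).} Let $M\in\operatorname{thick}(E)$. By \ref{thickle}, $r_\infty^{I,A}(M)\le r_\infty^{I,A}(E)\le d-q$, so $r_i^{I,A}(M)\le d-q$ for all $i\ge 1$. The corollary following \ref{boundLI} then gives an $\eta_1$, depending only on $I$ and $A$, with $\operatorname{end}H^j_{\mathfrak M}(L^I(M))\le \eta_1$ for $j=d-q,\dots,d-1$.

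Apply the long exact sequence of local cohomology to $0\to G_I(M)\to L^I(M)\to L^I(M)(-1)\to 0$. This gives
$$H^{j-1}_{\mathfrak M}(L^I(M))(-1)\to H^j_{\mathfrak M}(G_I(M))\to H^j_{\mathfrak M}(L^I(M)).$$
For $j=d-q+1,\dots,d-1$, both outer terms vanish in degrees $>\eta_1+1$. So $a^*_j(G_I(M))\le \eta_1+1$, and since $I$ is $\m$-primary, $a_j(G_I(M))=a_j^*(G_I(M))$ by \ref{b_j<a_j+1}.

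This is exactly the argument of Theorem \ref{boundonGI}, run with the non-strict hypothesis $r_\infty\le d-q$. That works because the lower index $j-1=d-q$ is still covered by \ref{boundLI}. Alternatively, I could quote \ref{boundonGI} with $q$ replaced by $q-1$. That is harmless when $q\ge 2$; for $q=1$ the range is empty, so nothing is lost.

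\textbf{Step 2 (index $d$).} The top index is not covered by Step 1, and I expect it to need care, because $H^{d}_{\mathfrak M}(L^I(M))$ is not known to be $*$-Artinian. Instead, use Proposition \ref{a_d<=red}. Since $M$ is MCM, $\dim M=d$, so $a_d(G_I(M))\le \operatorname{red}(I)-d$. Here $\operatorname{red}(I)$ is the reduction number of $I$ with respect to a fixed minimal reduction, and it depends only on $I$ and $A$.

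One could also use the surjection $G_I(A)^{\mu(M)}\to G_I(M)$, which gives $a_d(G_I(M))\le a_d(G_I(A))$. That bound likewise depends only on $I$ and $A$.

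\textbf{Conclusion.} Set $\eta=\max\{\eta_1+1,\ \operatorname{red}(I)-d\}$. Then $\max\{a_d(G_I(M)),\dots,a_{d-q+1}(G_I(M))\}\le\eta$ for every MCM $M\in\operatorname{thick}(E)$.

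The main point to verify is Step 1 at the boundary index $j=d-q+1$: it needs \ref{boundLI} to control $H^{d-q}_{\mathfrak M}(L^I(M))$, and that is precisely where the hypothesis $r_i\le d-q$ for all $i$ (obtained via $r_\infty$) enters.
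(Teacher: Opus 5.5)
Your proposal is correct and follows the paper's proof. You specialize to $N=A$, bound $a_j(G_I(M))$ for $d-q+1\le j\le d-1$ using $0\to G_I(M)\to L^I(M)\to L^I(M)(-1)\to 0$ and Lemma \ref{boundLI}, and bound $a_d$ by Trung's reduction-number estimate. Running that argument directly under the non-strict hypothesis $r_\infty\le d-q$ is in fact more careful than the paper, which cites Theorem \ref{boundonGI} even though that result is stated with the strict hypothesis $r_\infty<t-q$.

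One slip in an aside: quoting Theorem \ref{boundonGI} with $q-1$ in place of $q$ yields only $j=d-q+2,\ldots,d-1$, so it misses the index $d-q+1$. Your direct argument, not that shortcut, is the one to keep.
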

\begin{proof}
    From \ref{boundonGI} for $N=A$, we obtain an integer $\eta'$ (depending only on $I$ and $A$) such that for each integer $j=d-q+1,\ldots,d-1$, we have $a_j(G_I(M))\le \eta'$. Using \ref{basechange}, we can assume the residue field $k$ is infinite. So invoking \ref{a_d<=red}, we have $a_d(G_I(M)) \le \operatorname{red}(I)$. Hence $\eta=\eta'+\operatorname{red}(I)$ suffices.
\end{proof}
\begin{example}
See \cite[15.6]{CI} for examples of complete intersection and modules $M$ and ideals $I$ with $r_{\infty}^{I, A} = -1$ and $\operatorname{pdim} I^n = \infty$ for all $n \geq 1$.
\end{example}
Next we recall a result from \cite[Proposition 5.2.]{Part1}
\begin{proposition} \label{5.2 Part I}
  Let $(A,\m)$ be a Noetherian local ring, $X$ a finite $A$-module and let $I$ be an ideal of $A$ with $\operatorname{grade}(I,X)>0$. Set $\mathfrak{M}=\m\oplus \mathcal{R}(I)_+$ the *maximal ideal of $\mathcal{R}(I)$. Let $s\le \operatorname{depth}X-1$. Then $$H^i_\mathfrak{M}(L^I(X))=0\  \operatorname{for}\ i=0,\ldots,s\ \operatorname{iff}\ H^i_\mathfrak{M}(G_I(X))=0 \  \operatorname{for}\ i=0,\ldots,s.$$
\end{proposition}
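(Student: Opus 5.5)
The plan is to work entirely with the short exact sequence of $\mathcal{R}(I)$-modules from \ref{b_j<a_j+1},
$$0\longrightarrow G_I(X)\longrightarrow L^I(X)\longrightarrow L^I(X)(-1)\longrightarrow 0,$$
and its long exact sequence in graded local cohomology with respect to $\mathfrak{M}$. In degree $n$, and using $(L^I(X)(-1))_n=L^I(X)_{n-1}$, it reads
$$H^{i-1}_\mathfrak{M}(L^I(X))_{n-1}\longrightarrow H^i_\mathfrak{M}(G_I(X))_n\longrightarrow H^i_\mathfrak{M}(L^I(X))_n\xrightarrow{\ \delta^i_n\ } H^i_\mathfrak{M}(L^I(X))_{n-1}\longrightarrow H^{i+1}_\mathfrak{M}(G_I(X))_n .$$

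\textbf{The direction ``$\Rightarrow$''.} This is immediate from the sequence. If $H^i_\mathfrak{M}(L^I(X))=0$ for $0\le i\le s$, then for each such $i$ the module $H^i_\mathfrak{M}(G_I(X))$ sits between $H^{i-1}_\mathfrak{M}(L^I(X))(-1)$ and $H^i_\mathfrak{M}(L^I(X))$. Both of these vanish; for $i=0$ the left term is simply $0$. Hence $H^i_\mathfrak{M}(G_I(X))=0$ for $0\le i\le s$.

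\textbf{The direction ``$\Leftarrow$''.} Assume $H^i_\mathfrak{M}(G_I(X))=0$ for $0\le i\le s$ and fix such an $i$. Exactness at $H^i_\mathfrak{M}(L^I(X))_n$ then shows that $\delta^i_n$ is injective for every $n\in\Z$. So it suffices to know that $H^i_\mathfrak{M}(L^I(X))_n=0$ for all $n<0$. Granting that, we argue by induction on $n\ge 0$: the group $H^i_\mathfrak{M}(L^I(X))_n$ injects into $H^i_\mathfrak{M}(L^I(X))_{n-1}$, which is zero. Hence $H^i_\mathfrak{M}(L^I(X))=0$ for all $i\le s$.

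An alternative route uses the surjections $\delta^{i}_n$ available when $H^{i+1}_\mathfrak{M}(G_I(X))=0$, together with finiteness of $\operatorname{end}$ for the *-Artinian modules $H^i_\mathfrak{M}(L^I(X))$ with $i\le \operatorname{depth}X-1$ (\cite[Proposition 4.4]{Part1}). This pushes vanishing downward from degree $\operatorname{end}+1$. However, it only handles $i\le s-1$, so the injectivity argument is the one to use.

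\textbf{The main obstacle.} The key step is the vanishing of $H^i_\mathfrak{M}(L^I(X))$ in negative degrees for $i\le \operatorname{depth}X-1$. Since $L^I(X)$ is concentrated in degrees $\ge 0$, this is clear for $i=0$; for higher $i$ it needs an argument, because $L^I(X)$ is not finitely generated.

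I would first try to compute $H^i_\mathfrak{M}(L^I(X))$ by first taking $\mathcal{R}(I)_+$-torsion and then $\m$-torsion. The aim is to identify $H^i_\mathfrak{M}(L^I(X))_n$ for $n<0$ with a subquotient of $H^i_\m(X)$, or of the $H^i_\m(X/I^{j}X)$ in a limit, and then invoke $i<\operatorname{depth}X$ to get vanishing. The hypothesis $\operatorname{grade}(I,X)>0$ would enter through the Ratliff--Rush facts of \ref{superficialstarting}. If this identification is awkward, the fallback is to cite the corresponding lemma in \cite{Part1}, which underlies its Proposition 4.4.
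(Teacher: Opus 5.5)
Your ``$\Rightarrow$'' direction is correct. For ``$\Leftarrow$'', your two arguments already settle two cases. The case $i=0$ follows from degree-lowering injectivity, since $H^0_\mathfrak{M}(L^I(X))\subseteq L^I(X)$ lives in degrees $\ge 0$. The cases $0\le i\le s-1$ follow from surjectivity plus $\operatorname{end}<\infty$.

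The gap is the top index $i=s\ge 1$. You reduce it to the vanishing of $H^i_\mathfrak{M}(L^I(X))_n$ for all $n<0$ and all $i\le\depth X-1$. That statement is false, so neither the proposed computation nor a citation can supply it.

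\textbf{Counterexample.} Let $B=k[[t^3,t^4,t^5]]$, $J=(t^3,t^4)B$, $A=X=B[[x]]$ and $I=JA+xA$.
\begin{itemize}
\item $G_I(A)\cong G_J(B)[x^*]$, with $x^*$ an indeterminate of degree one. Hence $H^0_\mathfrak{M}(G_I(A))=0$, and so $H^0_\mathfrak{M}(L^I(A))=0$.
\item $H^1_\mathfrak{M}(G_I(A))\cong\bigoplus_{m\ge 1}\Gamma\,(x^*)^{-m}$, where $\Gamma$ is the torsion of $G_J(B)$ with respect to its *maximal ideal.
\item Since $t^5J\subseteq J^2=t^6k[[t]]$ but $t^5\notin J$, the class of $t^5$ is a nonzero element of $\Gamma_0$. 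So $H^1_\mathfrak{M}(G_I(A))_{-m}\neq 0$ for every $m\ge 1$.
\item Because $H^0_\mathfrak{M}(L^I(A))=0$, the map $H^1_\mathfrak{M}(G_I(A))\to H^1_\mathfrak{M}(L^I(A))$ is injective.
\end{itemize}
Thus $H^1_\mathfrak{M}(L^I(A))$ is nonzero in every negative degree, although $1=\depth A-1$. This does not contradict the proposition, since $\depth G_I(A)=1$.

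So vanishing in negative degrees can only come from the hypothesis on $G_I(X)$. For $i=s$, the $G$--$L$ sequence by itself gives only the degree-lowering injections. Together with $\operatorname{end}<\infty$ these cannot force vanishing (think of $k[T^{-1}]$).

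\textbf{Missing idea: an injection that raises degree.} Argue by induction on $s$; the case $s=0$ is yours.
\begin{itemize}
\item For $s\ge 1$, enlarge $k$ and choose $x\in I$ whose initial form $x^*\in I/I^2$ is $G_I(X)$-regular. This is possible because $\depth G_I(X)\ge 1$, when $I$ is an ideal of definition of $X$, as in every use in the paper.
\item By Valabrega--Valla, $(I^{n+1}X:_X x)=I^nX$ for all $n$, and $G_I(X/xX)\cong G_I(X)/x^*G_I(X)$. This gives an exact sequence $0\to L^I(X)(-1)\xrightarrow{xt}L^I(X)\to L^I(X/xX)\to 0$, and $X/xX$ satisfies the hypotheses for $s-1$.
\item By induction, $H^i_\mathfrak{M}(L^I(X/xX))=0$ for $i\le s-1$. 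So multiplication by $xt$ gives injections $H^i_\mathfrak{M}(L^I(X))_{n-1}\hookrightarrow H^i_\mathfrak{M}(L^I(X))_n$ for all $i\le s$ and all $n$.
\item Since $H^i_\mathfrak{M}(L^I(X))$ is *-Artinian for $i\le\depth X-1$, it vanishes in large degrees. Descending induction on $n$ then gives $H^i_\mathfrak{M}(L^I(X))=0$ for all $i\le s$.
\end{itemize}
This last step is exactly where the hypothesis $s\le\depth X-1$ is needed.
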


 Let $(A,\m)$ be a Noetherian local ring. For a finitely generated module $X$ and an ideal $I$ with $\lambda(X/IX)<\infty$, we set
$$\depth G_I(X):=\grade(G_I(A)_+,G_I(X))=\grade(\mathfrak{M}_{G_I(A)},G_I(X)),$$
where $\mathfrak{M}_{G_I(A)}=\m \oplus G_I(A)_+$ is the *maximal ideal of $G_I(A)$. Note that $H^i_\mathfrak{M}(G_I(X))=H^i_{G_I(A)_+}(G_I(X))$, since $I$ is an ideal of definition of $X$.
\begin{point} \normalfont (With hypothesis as in \ref{setup6}) Suppose $r_1^{I,N}(\Omega^dK)=-1$, from the proof \ref{-1case} we get $\pdim_A(N/I^nN)<\infty $ for $n\gg 0$.

The next theorem addresses the following situation where $\pdim_A(N/I^{n+1}N)<\infty$ for all $n\ge 0$.

\end{point}

\begin{theorem} \label{depthdueto-1}
   (With hypothesis as in \ref{setup6}) Suppose $\pdim_A(N/I^{n+1}N)<\infty$ for all $n\ge 0$, then for any MCM $A$-module $M$ we have $\depth G_I(M\otimes N)\ge \depth G_I(N).$
\end{theorem}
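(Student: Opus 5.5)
The plan is to compare $L^I(M\otimes N)$ with $L^I(N)$ through a free presentation of $M$, and then to pass from $L^I$ to $G_I$ using Proposition \ref{5.2 Part I}.

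Set $s=\depth G_I(N)$. If $s=0$ there is nothing to prove, so assume $s\ge 1$. Note that $s\le t=\dim N$.

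\textbf{Step 1: vanishing for $N$.} Since $N$ is Cohen--Macaulay of dimension $t$ and $I$ is an ideal of definition of $N$, we have $\grade(I,N)=t\ge 1$. Also $H^i_{\mathfrak M}(G_I(N))=0$ for $i\le s-1$, and $s-1\le \depth N-1$. So Proposition \ref{5.2 Part I} gives
$$H^i_{\mathfrak M}(L^I(N))=0\quad\text{for } i=0,\ldots,s-1.$$
The same vanishing holds for any finite direct sum of copies of $N$.

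\textbf{Step 2: the module $M\otimes N$ has a presentation by copies of $N$.} The key input is the hypothesis that $\pdim_A(N/I^{n+1}N)<\infty$ for all $n\ge 0$. By Lemma \ref{torext0}, this gives $\Tor_i^A(X,N/I^{n+1}N)=0$ for every MCM module $X$, every $i\ge 1$ and every $n\ge 0$.

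Take a free resolution of $M$ and truncate it as $0\to\Omega^{s}M\to F_{s-1}\to\cdots\to F_0\to M\to 0$. Every syzygy $\Omega^jM$ is MCM. Tensoring with $N/I^{n+1}N$ therefore keeps the sequence exact for every $n$. Summing over $n$, we get an exact sequence of $\mathcal R(I)$-modules
$$0\to L^I(\Omega^{s}M\otimes N)\to L^I(N)^{b_{s-1}}\to\cdots\to L^I(N)^{b_0}\to L^I(M\otimes N)\to 0.$$

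\textbf{Step 3: dimension shifting.} Break this long sequence into short exact sequences $0\to L^I(\Omega^{j+1}M\otimes N)\to L^I(N)^{b_j}\to L^I(\Omega^jM\otimes N)\to 0$. Chasing local cohomology through them, the vanishing from Step 1 gives
$$H^i_{\mathfrak M}(L^I(M\otimes N))\hookrightarrow H^{i+1}_{\mathfrak M}(L^I(\Omega M\otimes N))\hookrightarrow\cdots$$
for $i\le s-1$. This climbs until the index leaves the vanishing range for $L^I(N)$. To close it off we need one extra fact: $H^{j}_{\mathfrak M}(L^I(X\otimes N))=0$ for $j$ below $\depth G_I$ of the relevant module. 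I would try induction on $s$, using a common superficial element $x\in I$ for $N$ and $M\otimes N$ that is also $G_I(N)$-regular; such an $x$ exists because $\depth G_I(N)\ge 1$.

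The alternative route, which I expect is the cleaner one, is to prove the implication
$$H^i_{\mathfrak M}(L^I(N))=0 \text{ for } i<s\ \Longrightarrow\ H^i_{\mathfrak M}(L^I(M\otimes N))=0 \text{ for } i<s$$
by induction on $s$.

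For $s=1$: let $x$ be $N$-superficial with $x^*$ regular on $G_I(N)$, so that $I^{n+1}N:x=I^nN$ for all $n$. The maps $\alpha_n\colon N/I^nN\to N/I^{n+1}N$ are then injective for every $n$. Tensoring with $M$ keeps them injective, because by Tor-vanishing $\Tor_1^A(M,-)$ is zero on all $N/I^{n+1}N$, and also on $\overline N/I^{n+1}\overline N$; the latter is a quotient of the same type, so this needs the Tor vanishing to persist there. Injectivity of these maps for all $n$ gives $I^{n+1}(M\otimes N):x=I^n(M\otimes N)$ for all $n$. This makes $x^*$ regular on $G_I(M\otimes N)$, so $\depth G_I(M\otimes N)\ge 1$.

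For the inductive step, pass to $\overline N=N/xN$. By the Valabrega--Valla/Sally machine, $\depth G_I(\overline N)=s-1$. We also have $\overline{M\otimes N}=M\otimes\overline N$, and $\depth G_I$ drops by exactly one when we mod out by a $G$-regular superficial element.

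\textbf{Main obstacle.} The hardest point is that the hypothesis $\pdim_A(\overline N/I^{n+1}\overline N)<\infty$ must hold for the reduced module $\overline N$. This should follow from the short exact sequences
$$0\to N/I^nN\xrightarrow{x} N/I^{n+1}N\to\overline N/I^{n+1}\overline N\to 0,$$
which are exact for all $n$ exactly because $x^*$ is $G_I(N)$-regular. Finite projective dimension of the two left-hand terms then forces finite projective dimension of the cokernel. With that in place, induction on $s$ gives $\depth G_I(M\otimes N)\ge s$.
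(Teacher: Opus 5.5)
Your ``alternative route'' is a complete and correct proof, and it differs genuinely from the paper's. You fix $M$ and induct on $s$ by changing $N$. Take a degree-one $G_I(N)$-regular element $x^*$. Then the sequences $0\to N/I^nN\xrightarrow{x}N/I^{n+1}N\to \overline N/I^{n+1}\overline N\to 0$ are exact for all $n$, so $\pdim_A(\overline N/I^{n+1}\overline N)<\infty$. Hence $\Tor_1^A(M,\overline N/I^{n+1}\overline N)=0$ by Lemma \ref{torext0}, which gives $I^{n+1}(M\otimes N):x=I^n(M\otimes N)$ for all $n$, so $x^*$ is also $G_I(M\otimes N)$-regular. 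The Valabrega--Valla/Sally isomorphisms then give $\depth G_I(\overline N)=s-1$ and $\depth G_I(M\otimes N)=\depth G_I(M\otimes \overline N)+1$. Moreover $\overline N$ again satisfies every hypothesis, with $\dim \overline N=t-1\ge 1$ whenever $s\ge 2$ (since $s\le t$). This argument stays inside $G_I(-)$ and never needs the non-finitely generated modules $L^I(-)$ or Proposition \ref{5.2 Part I}. The one thing you omit is the reduction to an infinite residue field via \ref{basechange}. Without it, $\depth G_I(N)\ge 1$ need not produce a regular element of degree one.

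The paper instead fixes $N$ and inducts on $s$ over all MCM modules simultaneously, with no choice of elements. Your Steps 2--3 stalled because the syzygy sequences make the connecting maps raise the cohomological degree, out of the range where $H^\bullet_{\mathfrak M}(L^I(N))$ vanishes. The paper uses the cosyzygy sequence $0\to M\to A^{\tau}\to \Omega^{-1}M\to 0$ instead. Since $\Tor_1^A(\Omega^{-1}M,N/I^{n+1}N)=0$, this gives $0\to L^I(M\otimes N)\to L^I(N)^{\tau}\to L^I(\Omega^{-1}M\otimes N)\to 0$. Now $L^I(M\otimes N)$ is the submodule, so (given $H^i_{\mathfrak M}(L^I(N))=0$ for $i<s$) the vanishing of $H^i_{\mathfrak M}(L^I(M\otimes N))$ reduces to the vanishing of $H^{i-1}_{\mathfrak M}(L^I(\Omega^{-1}M\otimes N))$, one degree lower. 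That is exactly the induction hypothesis for the MCM module $\Omega^{-1}M$, and Proposition \ref{5.2 Part I} translates between $L^I$ and $G_I$ throughout.
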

\begin{proof} It is enough to prove that, if $\depth G_I(N)\ge s$ then $\depth G_I(M\otimes N)\ge s$. We proceed by induction on $s$. For $s=0$, there is nothing to prove. Assume $s\ge 1$ and also assume that our claim holds for $s-1$. Suppose $\depth G_I(N)\ge s$. By \ref{5.2 Part I} we have
    \begin{equation} \label{eq24}
        H^i_\mathfrak{M}(L^I(N))=0  \operatorname{for} \ i=0,\ldots,s-1.
    \end{equation}
    Next we tensor the exact sequence $0\longrightarrow M\longrightarrow A^\tau\longrightarrow \Omega^{-1}M\longrightarrow 0$ with $N/I^{n+1}N$ to obtain the following exact sequence $$\Tor_1^A(\Omega^{-1}M,\frac{N}{I^{n+1}N})\longrightarrow \frac{M\otimes N}{I^{n+1}(M\otimes N)}\longrightarrow \frac{N}{I^{n+1}N}^{\tau}\longrightarrow \frac{(\Omega^{-1}M)\otimes N}{I^{n+1}((\Omega^{-1}M)\otimes N)}\longrightarrow 0.$$
    By hypothesis $\pdim_A(N/I^{n+1}N)<\infty$ for $n\ge 0$, so by \ref{torext0} we have $\Tor_1^A(\Omega^{-1}M,N/I^{n+1}N)=0$ for all $n\ge 0$. Therefore we obtain an exact sequence of $\mathcal{R}(I)$-modules
    \begin{equation} \label{eq25}
      0\longrightarrow L^I(M\otimes N)\longrightarrow L^I(N)^\tau\longrightarrow L^I((\Omega^{-1}M)\otimes N) )\longrightarrow 0.
    \end{equation}
    Now by induction hypothesis $\depth G_I((\Omega^{-1}M)\otimes N))\ge s-1$, so by \ref{5.2 Part I} we have
    \begin{equation} \label{eq26}
     H^i_\mathfrak{M}(L^I((\Omega^{-1}M)\otimes N))=0 \ \operatorname{for} \ i=0,\ldots,s-2.
    \end{equation}
    Using the exact sequence in (\ref{eq25}), we obtain the following exact sequence in local cohomology $$H^i_\mathfrak{M}(L^I((\Omega^{-1}M)\otimes N))\longrightarrow H^{i+1}_\mathfrak{M}(L^I(M\otimes N))\longrightarrow H^{i+1}_\mathfrak{M}(L^I((\Omega^{-1}M)\otimes N)).$$
    Hence by (\ref{eq24}) and (\ref{eq26}) we have $H^i_\mathfrak{M}(L^I(M\otimes N))=0$ for $i=0,\ldots,s-1$. Finally \ref{5.2 Part I} implies that $H^i_\mathfrak{M}(G_I(M))=0$ for $i=0,\ldots,s-1$ and thus $\depth G_I(M\otimes N)\ge s$. This completes the proof.
\end{proof}
Applying the above theorem with $N=A$, we obtain the following corollary.
\begin{corollary}
  Let $(A,\m)$ be a Gorenstein local ring and $I$ an $\m$-primary ideal. Suppose $A/I^{n+1}$ has finite projective dimension for all $n\ge 0$, then for any MCM $A$-module $M$ we have $\depth G_I(M)\ge \depth G_I(A).$
\end{corollary}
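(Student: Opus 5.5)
This corollary is the special case $N=A$ of Theorem~\ref{depthdueto-1}, so the plan is to check that every hypothesis of Setup~\ref{setup6} is met when $N=A$, and then read off the conclusion.

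\emph{Verification of the setup.} First, $A$ is a perfect $A$-module: its projective dimension is $0$, and $\Hom_A(A,A)=A\neq 0$, so the minimal $j$ with $\Ext^j_A(A,A)\ne 0$ is $0$. Next, $t=\dim A=d$. An $\m$-primary ideal $I$ is an ideal of definition of $A$. The hypothesis $\pdim_A(A/I^{n+1})<\infty$ for all $n\ge 0$ is exactly the hypothesis $\pdim_A(N/I^{n+1}N)<\infty$ of Theorem~\ref{depthdueto-1}. Finally, $M\otimes_A A\cong M$, so $G_I(M\otimes N)=G_I(M)$ and $G_I(N)=G_I(A)$.

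\emph{Degenerate cases.} Setup~\ref{setup6} also asks that $A$ be non-regular and that $d\ge 1$.
\begin{itemize}
\item If $A$ is regular, every MCM module is free, say $M\cong A^r$. Then $G_I(M)\cong G_I(A)^r$, so the two depths are equal (with the convention $\depth G_I(0)=\infty$ if $M=0$).
\item If $d=0$, then $I$ is nilpotent. Hence $G_I(M)$ and $G_I(A)$ have finite length, so both depths are $0$ (or $M=0$), and the inequality is trivial.
\end{itemize}
Alternatively, the proof of Theorem~\ref{depthdueto-1} does not really use non-regularity or $d\ge 1$, apart from having MCM cosyzygies; for a free $M$ one can take $\Omega^{-1}M=0$.

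\emph{Conclusion.} Once the cases above are handled, Theorem~\ref{depthdueto-1} applies directly and gives $\depth G_I(M)\ge \depth G_I(A)$.

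\emph{Possible obstacle.} The only point needing a little care is the degenerate-case bookkeeping, in particular making sure the cosyzygy sequence $0\to M\to A^\tau\to\Omega^{-1}M\to 0$ is available. If $M$ has free summands, write $M=M'\oplus F$ with $F$ free. Since $\depth$ of a direct sum is the minimum of the depths, and $G_I(F)$ is a direct sum of copies of $G_I(A)$, it suffices to treat the non-free part $M'$, where the theorem applies verbatim.
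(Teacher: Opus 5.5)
Your proposal is correct and follows the paper's approach: the paper also obtains the corollary by specializing Theorem \ref{depthdueto-1} to $N=A$. Your separate treatment of the regular case and the case $d=0$ is a sound addition. The corollary's hypotheses allow these cases, but Setup \ref{setup6} excludes them, and the paper's one-line derivation does not address this.
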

\section{Tor-vanishing property and Test module} \phantomsection \label{final section}
 In this section, our aim is to explore cases in which the inequality established in Theorem \ref{abound} becomes an equality.
\begin{setup}\normalfont  \label{setup2}
    Throughout this section, we work with the following setup: \\
 Let $(A,\m,k)$ be a non-regular Gorenstein local ring of dimension $d\ge 1$, with uncountable residue field $k$. Let $N$ be a perfect module of dimension $t\ge 1$ and $I$ an ideal of definition of $N$.
\end{setup}
  In this section our goal is to prove that, for every non-free MCM $A$-module $M$, there exists infinitely many integers $i$ such that
  $r_i^{I,N}(M)=r_1^{I,N}(\Omega^dk)$, under additional assumptions on $A$. By \ref{abound} we have that  $  r_i^{I,N}(M)\le r_1^{I,N}(\Omega^dk)$ for all $i\ge 1$. We first address the case when $r_1^{I,N}(\Omega^dk)=0$. We now recall the definition of a test module.
\begin{definition} \normalfont
    A finite $A$-module $X$ is called a \textit{test module}, if for all $A$-modules $Y$ with $\Tor_{i\gg 0}^A(X,Y)=0$ have finite projective dimension.
\end{definition}
\begin{example} \label{egtestmodule} \normalfont Test modules are abundant.
\begin{enumerate} [\rm (i)]
    \item If $M$ is a test module, then so is $\Omega^nM$ for each integer $n\ge 0$.
    \item  Let $(A,\m,k)$ be a local ring, then the residue field $k$ is a test module. So $\Omega^nk$ is a test module for each integer $n\ge 0$.
    \item Let $(A,\m,k)$ be a local ring with infinite residue field $k$. Let $M$ be a finite $A$-module with $\operatorname{depth}(M)>0$. Then $\widetilde{\m^nM}$ is a test module for all $n\ge1$. As $\widetilde{\m^nM}=\m^nM$ for $n\gg 0$, so $\m^nM$ is a \textit{test module} for all $n\gg 0$ (see \cite[1.5.1, Definition 2.1, Proposition 2.2 and Proposition 2.3(5)]{testmodulefull}.
    \item  Let $(A,\m)$ be a local complete intersection. Then a finite $A$-module $M$ is a test module if and only if  $\operatorname{cx}_AM=\operatorname{codim}A$ (see \cite[Proposition 2.7]{testmodulecx}). Therefore, if $\operatorname{codim}A=1$, then every finite $A$-module $M$ of infinite projective dimension, is a test module. In particular, every non-free MCM $A$-module is a test module, if $A$ is hypersurface.
    \item  Let $(A,\m)$ be a local ring and $I$ an $\m$-primary, integrally closed ideal. Then $A/I$ is a test module (see \cite[Corollary 3.3]{testmoduleic}).
\end{enumerate}
\end{example}
We are now ready to present and prove the main result of this section.
\begin{theorem} \label{infinitei}
    (With hypothesis as in \ref{setup2}) Suppose $M$ is a non-free MCM $A$-module, which is also a \textit{test module}. Then there exists infinitely many integers $i\ge 1$ such that $r_i^{I,N}(M)=r_1^{I,N}(\Omega^dk).$
\end{theorem}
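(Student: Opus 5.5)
Set $r=r_1^{I,N}(\Omega^dk)$. By Theorem \ref{abound} we already have $r_i^{I,N}(M)\le r$ for all $i\ge 1$, so the task is to produce infinitely many $i$ with $r_i^{I,N}(M)\ge r$. If $r=-1$ there is nothing to prove, because every $r_i^{I,N}(M)$ equals $-1$ by Lemma \ref{-1case}. So assume $r\ge 0$. The argument goes by induction on $t=\dim N$, and the base case $t=1$ (where necessarily $r=0$) carries all the content; this is where the test-module and uncountability hypotheses are used.

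\textbf{Base case $t=1$, $r=0$.} We must find infinitely many $i$ with $\Tor_i^A(M,N/I^nN)\ne 0$ for $n\gg0$. Suppose instead that there is $i_0$ such that for every $i\ge i_0$ we have $r_i^{I,N}(M)=-1$, i.e.\ $\Tor_i^A(M,N/I^nN)=0$ for $n\ge n_i$, where the threshold $n_i$ depends on $i$. The difficulty is to obtain a single $n$ that works for all $i\ge i_0$; this is the main obstacle, and it is where the uncountable residue field enters. My first attempt: for each $i$, the vanishing $\Tor_i^A(M,N/I^nN)=0$ for $n\gg0$ propagates, via the sequences in \ref{F1tor} and the fact that $L_i^{I,N}(M)$ is a finitely generated $\mathcal{R}(I)$-module (Proposition \ref{LiEj}) of dimension $\le 1$, to $\Tor_i^A(M,N/xN)=0$ for a general superficial element $x\in I$. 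Each $i$ excludes only a proper Zariski-closed subset of $I/\m I$ of bad choices of $x$. Since $k$ is uncountable, a countable union of proper closed subsets cannot cover $I/\m I$, so a single $x$ is superficial and filter regular for all the countably many modules $L_i^{I,N}(M)$, $i\ge 1$, simultaneously. For this $x$ we get $\Tor_i^A(M,N/xN)=0$ for all $i\gg0$. Here $N/xN$ is perfect of dimension $0$ (it is $N$ modulo a regular element), hence has finite length. Since $M$ is a test module, this forces $\pdim_A(N/xN)<\infty$ as well; that holds anyway, so a test module alone gives no contradiction. The contradiction must instead come from the length computation: from the exact sequence in \ref{F1tor} one obtains $\lambda(\Tor_i^A(M,N/I^{n+1}N))-\lambda(\Tor_i^A(M,N/I^{n}N))$ expressed through $\Tor_i^A(M,N/xN)$. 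A cleaner route therefore replaces $N/xN$ by $\overline{N}/I^{n}\overline N$, or by the modules $I^nN/I^{n+1}N$, whose projective dimension is infinite precisely when $r\ge0$. Indeed, if $\pdim_A(N/I^nN)<\infty$ for $n\gg0$ then $r=-1$, so the $N/I^nN$ are of infinite projective dimension. The test property of $M$ then gives, for each fixed large $n$, infinitely many $i$ with $\Tor_i^A(M,N/I^nN)\ne0$. Using the uncountable field argument to fix one $n$ that is beyond all thresholds $n_i$ simultaneously (via the graded structure of $L_i^{I,N}(M)$ and a common filter-regular element $xt$), we contradict the assumed vanishing for all $i\ge i_0$.

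\textbf{Inductive step $t\ge2$.} Using uncountability again, choose one $x\in I$ that is superficial for $N\oplus(M\otimes N)\oplus(\Omega^dk\otimes N)$ and such that $xt$ is filter regular on $L_p^{I,N}(M)$ for every $p\ge1$ and on $L_1^{I,N}(\Omega^dk)$. This is possible because it avoids countably many proper closed sets. By \ref{F1tor}, for all $i$ this gives $r_i^{I,N/xN}(M)=r_i^{I,N}(M)-1$ whenever $r_i^{I,N}(M)\ge0$, and similarly $r_1^{I,N/xN}(\Omega^dk)=r-1$. When $r_i^{I,N}(M)=-1$, the quotient $r_i^{I,N/xN}(M)$ is also $-1$. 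Since $N/xN$ is perfect of dimension $t-1$, the induction hypothesis gives infinitely many $i$ with $r_i^{I,N/xN}(M)=r-1\ge -1$. If $r-1\ge0$, those $i$ satisfy $r_i^{I,N}(M)=r$. If $r-1=-1$, the claim is trivial from the reduction above, since $r=0$ and the base-case style argument applies directly.
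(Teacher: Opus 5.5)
\textbf{There is a genuine gap, in the case $r=0$.} Your reduction from $r\ge 1$ to $r=0$ is fine and is what the paper does with a superficial sequence of length $r$: you cut $N$ by a general element that is superficial and filter regular for the countably many relevant modules. Your observation that $\pdim_A(N/I^nN)=\infty$ for $n\gg 0$ when $r\ge 0$ is also correct, as is your use of the test-module property once a single good $n$ is available. The gap is exactly at the step you yourself flag as the main obstacle. You want one large $n$ with $\Tor_i^A(M,N/I^nN)=0$ for all $i\gg 0$, and you propose to get it from an $xt$ that is filter regular on every $L_i^{I,N}(M)$. This gives nothing. Under your contradiction hypothesis each $L_i^{I,N}(M)$ with $i\ge i_0$ has only finitely many nonzero components, so filter regularity of $xt$ on it holds automatically and carries no information. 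More generally, filter regularity only gives injectivity of $xt$ on $L_i^{I,N}(M)_n$ for $n\ge c_i$, with $c_i$ depending on $i$. Avoiding countably many closed sets produces a common element, never a common degree bound, so the thresholds $n_i$ may still tend to infinity.

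\textbf{The missing idea is a bound uniform in $i$, taken from outside the modules $L_i^{I,N}(M)$.} The paper's argument runs as follows.
\begin{enumerate}
\item Replace $M$ by a syzygy, which is still a non-free MCM test module, so that $r_j^{I,N}(M)=-1$ for all $j\ge 1$.
\item Tensor $0\to\Omega^{i}M\to A^{\beta_{i-1}(M)}\to\Omega^{i-1}M\to 0$ with $N/I^{n+1}N$. This embeds $L_i^{I,N}(M)=L_1^{I,N}(\Omega^{i-1}M)$ as a graded submodule of $L^I(\Omega^iM\otimes N)$, compatibly with multiplication by $xt$.
\item Put $X=\Omega^iM\otimes N$, which is Cohen--Macaulay of dimension $t$. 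Since $x$ is $X$-superficial, multiplication by $xt$ on $L^I(X)_n$ is injective once $(I^{n+2}X:x)=I^{n+1}X$. By \ref{superficialstarting} this holds for $n$ beyond a bound controlled by $\operatorname{end}H^0_{\mathfrak{M}}(L^I(X))$.
\item Lemma \ref{boundLI}, applied with $q=t$ (legitimate because all $r_j^{I,N}(\Omega^iM)=-1\le 0$), bounds these ends by an $\eta$ depending only on $I$ and $N$.
\item Hence $\Tor_i^A(M,N/I^{n+1}N)\hookrightarrow \Tor_i^A(M,N/I^{n+2}N)$ for all $n\ge\eta$ and all $i$. Eventual vanishing then forces $\Tor_i^A(M,N/I^{n+1}N)=0$ for every $n\ge\eta$ and every $i\ge 1$.
\item The test-module property gives $\pdim_A(N/I^{n+1}N)<\infty$ for $n\ge\eta$, hence $r=-1$, a contradiction.
\end{enumerate}
Without such a uniform bound your $r=0$ argument does not close. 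Note that $r=0$ can occur for any $t$, and your inductive step sends that case back to this same argument. So you need this general-$t$ uniform bound, not merely the easy $t=1$ estimate.
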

\begin{proof}
 If $r_1^{I,N}(\Omega^dk)=-1$, the assertion follows from \ref{abound}. so we assume $r_1^{I,N}(\Omega^dk)\ge 0$. We proceed  by contradiction. Suppose there exist an integer $i_0$ such that $r_i^{I,N}(M)<r_1^{I,N}(\Omega^dk)$ for all $i>i_0$. Now $\Omega^jM$ is a test module for each $j\ge1$. Therefore, replacing  $M$ by $\Omega^{i_0}M$, we can assume that $r_i^{I,N}(M)<r_1^{I,N}(\Omega^dk)$ for all $i\ge1$.

  Let $r_1^{I,N}(\Omega^dk)=0$. So we have $r_i^{I,N}(M)=-1$ for all $i\ge 1$. Hence for each $i\ge 1$, there exists an integer $n(i)$ such that
  \begin{equation} \label{eq34}
      \Tor_i^A(M,N/I^{n+1}N)=0 \ \operatorname{for} \ \operatorname{all} \ n\ge n(i).
  \end{equation}
For each $i\ge0$, tensoring the exact sequence $0\longrightarrow\Omega^{i+1}(M)\longrightarrow A^{\beta_i(M)}\longrightarrow \Omega^iM\longrightarrow 0$ with  $N/I^{n+1}N$, we obtain the following injective map graded $\mathcal{R}(I)$-modules
$$0\longrightarrow L^{I,N}_1(\Omega^iM)\longrightarrow L^I(\Omega^{i+1}M\otimes N).$$
Since the residue field $k$ is uncountable, we can choose an element $x\in I$ such that $x\in I$ is $\Omega^{i+1}M\otimes N$-superficial with respect to $I$ and $xt\in A[It]_1$ is $L^{I,N}_1(\Omega^iM)$-filter regular for all $i\ge 0$. Note that $L^{I,N}_1(\Omega^iM)=L^{I,N}_{i+1}(M)$. Thus for each $n\ge 0$  and $i\ge 1$ we have the following commutative diagram

$$
\begin{tikzcd}
0 \arrow[r] & L_i^{I,N}(M)_n \arrow[r] \arrow[d, "xt"] & L^I(\Omega^{i+1}M\otimes N)_{n} \arrow[d, "xt"] \\
0 \arrow[r] & L_i^{I,N}(M)_{n+1}  \arrow[r] & L^I(\Omega^{i+1}M\otimes N)_{n+1}
\end{tikzcd}
$$
Invoking \ref{superficialstarting}, we have the following injective map for each $i\ge 1$ and $n\ge \eta$ (obtained in Lemma \ref{boundLI})
$$0\longrightarrow L^I(\Omega^{i+1}M\otimes N)_{n}=\frac{\Omega^{i+1}M\otimes N}{I^{n+1}(\Omega^{i+1}M\otimes N)} \xlongrightarrow{xt}  L^I(\Omega^{i+1}M\otimes N)_{n+1}=\frac{\Omega^{i+1}M\otimes N}{I^{n+2}(\Omega^{i+1}M\otimes N)},$$
thus for each $i\ge 1$ and $n\ge \eta$, we have an injective map
$$0\longrightarrow \Tor_i^A(M,N/I^{n+1}N)\longrightarrow \Tor_i^A(M,N/I^{n+2}N).$$
Therefore from equation (\ref{eq34}), for each $i\ge 1$ and $n\ge \eta$, we obtain $\Tor_i^A(M,N/I^{n+1}N)=0$. Since $M$ is a test module, so we have $\pdim_A(N/I^{n+1}N)<\infty$ for all $n\ge \eta$. By \ref{torext0}, we obtain $\Tor_1^A(\Omega^dk,N/I^{n+1}N)=0$ for all $n\ge \eta$, which implies $r_1^{I,N}(\Omega^dk)=-1$, a contradiction.

Let $r_1^{I,N}(\Omega^dk)=r\ge 1$. Since the residue field $k$ is uncountable, we can choose a sequence of elements $\underline{x}=x_1,\ldots,x_r\in I$ such that $\underline{x}\in I$ is $N,\Omega^dk\otimes N, \Omega^iM\otimes N$-superficial sequence and $\underline{x}t\in A[It]_1$ is $L^{I,N}_1(\Omega^dk),L^{I,N}_1(\Omega^iM)$-filter regular sequence for each $i\ge 0$. Note that $\bar{N}=N/\underline{x}N$ is a perfect $A$-module and $\dim \bar{N}\ge 1$. Using the assumptions on the sequence $\underline{x}$ and imitating the argument in \ref{-1argu} and  applying the exact sequence in \ref{F1tor} repeatedly, we obtain, for each $i\ge 1$, that
$$r_1^{I,\bar{N}}(\Omega^dk)=0 \ \operatorname{and} \ r_i^{I,\bar{N}}(M)=r_i^{I,N}(M)-r.$$ Since $r_1^{I,\bar{N}}(\Omega^dk)=0$, hence by the previous case there exists infinitely many integers $i\ge1$, such that $r_i^{I,\bar{N}}(M)=0$, thus for these integers $i\ge 1$, we have $r_i^{I,N}(M)=r=r_1^{I,N}(\Omega^dk)$. This completes the proof.
\end{proof}
We now give the definition of \textit{Tor-vanishing property} of a ring.
\begin{definition} \normalfont
A Noetherian ring $A$ is said to satisfy the \textit{Tor-vanishing property} if for finite $A$-modules $M,N$ with $\Tor_{i\gg 0}^A(M,N)=0$ implies that either $M$ or $N$ has finite projective dimension.
\end{definition}
\begin{example} \label{egtvp} \normalfont  Let $(A,\m)$ be Gorenstein local ring of minimal multiplicity, that is, $e(A)=\operatorname{emdim}A-\dim A+2$ and assume $\operatorname{codim}A\ge 3$ (so $A$ is not a complete intersection). Then $A$ satisfies Tor-vanishing property (see \cite[Theorem 3.6.]{Tor-vanishing GO})

For more examples, we refer to \cite{Tor-vanishing} (see Theorem 4.8).
\end{example}

\begin{remark} \normalfont
 If the ring $A$ satisfies the Tor-vanishing property, then any finite
 $A$-module with infinite projective dimension, is a test module. In particular every non-free MCM $A$-module is a test module. Hence, as a corollary of Theorem \ref{infinitei}, we obtain the following result.
\end{remark}

\begin{corollary} Let $(A,\m,k)$ be a non-regular Gorenstein local ring of dimension $d\ge 1$, with uncountable residue field $k$, satisfying the \textit{Tor-vanishing property}. Let $N$ be a perfect module of dimension $t\ge 1$ and $I$ an ideal of definition of $N$. Then for every non-free MCM $A$-module $M$, there exists infinitely many integers $i\ge 1$ such that $$r_i^{I,N}(M)=r_1^{I,N}(\Omega^dk).$$
     \end{corollary}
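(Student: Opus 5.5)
This corollary should follow directly from Theorem \ref{infinitei}. The only thing to check is that, under the Tor-vanishing property, every non-free MCM $A$-module is a test module. Once that is verified, the hypotheses of \ref{setup2} are exactly those stated here: $A$ is non-regular Gorenstein of dimension $d\ge 1$ with uncountable residue field, $N$ is perfect of dimension $t\ge 1$, and $I$ is an ideal of definition of $N$. Theorem \ref{infinitei} then gives infinitely many $i\ge 1$ with $r_i^{I,N}(M)=r_1^{I,N}(\Omega^dk)$.

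To see that $M$ is a test module, I first note that $\pdim_A M=\infty$. Indeed, $M$ is MCM, so if it had finite projective dimension, the Auslander--Buchsbaum formula would give $\pdim_A M=\depth A-\depth M=0$, and $M$ would be free, contrary to assumption. Now let $Y$ be a finite $A$-module with $\Tor_i^A(M,Y)=0$ for $i\gg 0$. The Tor-vanishing property says that $M$ or $Y$ has finite projective dimension. Since $M$ does not, $\pdim_A Y<\infty$. This is exactly the definition of $M$ being a test module.

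The one point of care is that the definition of test module quantifies over ``all $A$-modules $Y$'', whereas the Tor-vanishing property is stated only for finite modules. In the proof of Theorem \ref{infinitei}, the test-module property is applied only to $Y=N/I^{n+1}N$, which is finitely generated. So the restricted version suffices: either read ``module'' as ``finite module'' (as in the paper's convention that all modules are finitely generated), or note that the argument of Theorem \ref{infinitei} goes through verbatim with this restricted notion. Apart from this bookkeeping, the corollary is immediate and I expect no real obstacle.
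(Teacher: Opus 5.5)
Your proposal is correct and follows the paper's own route: the paper notes, in the remark just before the corollary, that under the Tor-vanishing property every finite module of infinite projective dimension, in particular every non-free MCM module, is a test module, and then applies Theorem \ref{infinitei}. Your Auslander--Buchsbaum argument for $\pdim_A M=\infty$ and your remark that only finitely generated $Y$ are needed make explicit what the paper leaves implicit.
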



\providecommand{\bysame}{\leavevmode\hbox to3em{\hrulefill}\thinspace}
\providecommand{\MR}{\relax\ifhmode\unskip\space\fi MR }
\providecommand{\MRhref}[2]{
  \href{http://www.ams.org/mathscinet-getitem?mr=#1}{#2}
}

\end{document}